\newtheorem{thm}{Theorem}[section]
\newtheorem*{thm*}{Theorem}
\newtheorem{lem}[thm]{Lemma}
\newtheorem*{lem*}{Lemma}
\newtheorem{claim}[thm]{Claim}
\newtheorem{prop}[thm]{Proposition}
\theoremstyle{definition}
\newtheorem{case}{Case}\renewcommand{\thecase}{}
\newtheorem*{case*}{Case}
\newtheorem*{defn*}{Definition}
\newtheorem*{exmp*}{Example}
\newtheorem{rmk}[thm]{Remark}
\newtheorem*{rmk*}{Remark}
\newtheorem{step}{Step}\renewcommand{\thestep}{}
\theoremstyle{remark}
\def\alphenumi{
  \def\theenumi{\alph{enumi}}
  \def\p@enumi{\theenumi}
  \def\labelenumi{(\@alph\c@enumi)}}
\def\thecase{\@arabic\c@case}
\def\thestep{\@arabic\c@step}
\def\hhmm{\number\hh:\ifnum\mm<10{}0\fi\number\mm}
\let\oldmarginpar\marginpar
\renewcommand\marginpar[1]{\-\oldmarginpar[\raggedleft\footnotesize #1]%
{\raggedright\footnotesize #1}}
\newcommand\dotprod{\hbox{$\cdot$}}
\newcommand\EE{\mathbb{E}}
\newcommand\NN{\mathbb{N}}
\newcommand\PP{\mathbb{P}}
\newcommand\RR{\mathbb{R}}
\newcommand\cF{{\mathcal{F}}}
\newcommand\cH{{\mathcal{H}}}
\newcommand\cS{{\mathcal{S}}}
\newcommand\cT{{\mathcal{T}}}
\newcommand\fw{{\mathfrak{w}}}
\newcommand\eps{\varepsilon}
\newcommand\dist{\operatorname{dist}}
\newcommand\loc{\operatorname{loc}}
\numberwithin{equation}{section}
\begin{document}

\title[Fractional Laplacian with drift]{Optimal regularity of solutions to the obstacle problem for the fractional Laplacian with drift}

\author[A. Petrosyan]{Arshak Petrosyan}
\address[AP]{Department of Mathematics, Purdue University, West Lafayette, IN 47907}
\email{arshak@math.purdue.edu}

\author[C. Pop]{Camelia A. Pop}
\address[CP]{Department of Mathematics, University of Pennsylvania, Philadelphia, PA 19104-6395}
\email{cpop@math.upenn.edu}

\date{\today{ }\hhmm}

\begin{abstract}
We prove existence, uniqueness and optimal regularity of solutions to the stationary obstacle problem defined by the fractional Laplacian operator with drift, in the subcritical regime. As in \cite{Caffarelli_Salsa_Silvestre_2008}, we localize our problem by considering a suitable extension operator introduced in \cite{Caffarelli_Silvestre_2007}. The structure of the extension equation is different from the one constructed in \cite{Caffarelli_Salsa_Silvestre_2008}, in that the obstacle function has less regularity, and exhibits some singularities. To take into account the new features of the problem, we prove a new monotonicity formula, which we then use to establish the optimal regularity of solutions.
\end{abstract}

% AMS 2010 subject classifications (used in AMS journals)
% Primary
% 35H99   Close-to-elliptic equations and systems 
% 35J70  	Degenerate elliptic equations
% 35J86  	Linear elliptic unilateral problems and linear elliptic variational inequalities
% 49J40  	Variational methods including variational inequalities
% 35R45  	Partial differential inequalities
%
% Secondary
% 60G22  	Fractional processes, including fractional Brownian motion
% 35R35  	Free boundary problems
% 60J60  	Diffusion processes
% 49J20  	Optimal control problems involving partial differential equations

\subjclass[2010]{Primary 35R35; secondary 60G22}
% AMS keywords (used in AMS journals)
\keywords{Obstacle problem, fractional Laplacian, degenerate elliptic equations, symmetric stable processes, jump diffusion processes, Markov processes}

% Acknowledge support
\thanks{AP was supported in part by NSF grant DMS-1101139. CP would like to thank Paul Feehan for introducing her to obstacle problems.}

\maketitle

\tableofcontents

\section{Introduction}
\label{sec:Intro}
We consider the linear operator defined by the fractional Laplacian with drift,
\begin{equation}
\label{eq:Operator}
Lu(x) : = \left(-\Delta\right)^s u(x) + b(x)\dotprod\nabla u (x)+c(x) u(x),\quad \forall u \in C^{2}_c(\RR^n),
\end{equation}
where the coefficient functions $b:\RR^n\rightarrow\RR^n$ and $c:\RR^n\rightarrow\RR$ are assumed to be H\"older continuous.
The action of the fractional Laplacian operator on functions $u\in C^2_c(\RR^n)$ is given by the singular integral,
\begin{equation*}
(-\Delta)^s u(x) = c_{n,s} \hbox{ p.v.} \int_{\RR^n}\frac{u(x)-u(y)}{|x-y|^{n+2s}}\, dy,
\end{equation*}
understood in the sense of the principal value. The constant $c_{n,s}$ is positive and depends only on the dimension $n\in \NN$, and on the parameter $s\in (0,1)$. The range $(0,1)$ of the parameter $s$ is particularly interesting because in this case the fractional Laplacian operator is the infinitesimal generator of the symmetric $2s$-stable process \cite[Example 3.3.8]{Applebaum}.

The fractional Laplacian plays the same paradigmatic role in the theory of non-local operators that the Laplacian plays in the theory of local elliptic operators. For this reason, the regularity of solutions to equations defined by the fractional Laplacian and its gradient perturbation is intensely studied in the literature. In this article, we study the stationary obstacle problem defined by the fractional Laplacian operator with drift \eqref{eq:Operator}, in the subcritical regime, that is, the case when the parameter $s$ belongs to the range $(1/2,1)$. Given an obstacle function, $\varphi\in C^{3s}(\RR^n)\cap C_0(\RR^n)$, we prove existence, uniqueness and optimal regularity of solutions in H\"older spaces, $u \in C^{1+s}(\RR^n)$, for the stationary obstacle problem,
\begin{equation}
\label{eq:Obstacle_problem}
\min\{\left(-\Delta\right)^s u(x) + b(x)\dotprod\nabla u (x)+c(x) u(x), u(x)-\varphi(x)\}=0,\quad\forall x\in \RR^n.
\end{equation}
Our main result is
\begin{thm}[Existence, uniqueness and optimal regularity of solutions]
\label{thm:Solutions}
Let $s \in (1/2,1)$. Assume that $b \in C^{s}(\RR^n;\RR^n)$, and $c \in C^{s}(\RR^n)$ is such that 
\begin{equation}
\label{eq:Nonnegative_lower_bound_c}
c \geq 0\quad\hbox{on }\RR^n.
\end{equation}
Assume that $\varphi\in C^{3s}(\RR^n)\cap C_0(\RR^n)$ is such that 
\begin{equation}
\label{eq:Boundedness_L_phi_positive_part}
(L\varphi)^+\in L^{\infty}(\RR^n).
\end{equation}
Then the obstacle problem \eqref{eq:Obstacle_problem} has a solution, $u \in C^{1+s}(\RR^n)$. If in addition the vector field $b:\RR^n\rightarrow\RR^n$ is a Lipschitz function, and there is a positive constant, $c_0$, such that
\begin{equation}
\label{eq:Lower_bound_c}
c(x)\geq c_0,\quad\forall x\in \RR^n,
\end{equation}
then there is a unique solution, $u \in C^{1+s}(\RR^n)$, to the obstacle problem \eqref{eq:Obstacle_problem}.
\end{thm}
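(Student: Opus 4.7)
The plan is to follow the overall strategy of Caffarelli--Salsa--Silvestre, suitably modified to accommodate the gradient perturbation $b\cdot\nabla u$ and the zero-order term $cu$. The first step is to localize the nonlocal obstacle problem via the Caffarelli--Silvestre extension: extend $u$ to a function $\tu(x,y)$ on the upper half-space $\RR^{n+1}_+=\RR^n\times(0,\infty)$ satisfying the degenerate elliptic equation $\divg(y^{1-2s}\nabla\tu)=0$ with trace $\tu(\cdot,0)=u$. Then $c_{n,s}(-\Delta)^s u$ is realized as the weighted co-normal derivative $-\lim_{y\to 0^+}y^{1-2s}\rd_y\tu$, so \eqref{eq:Obstacle_problem} becomes a thin-obstacle (Signorini-type) problem on $\{y=0\}$ with an inhomogeneous right-hand side determined by $b\cdot\nabla u+cu$. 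The restriction $s>1/2$ is crucial: it makes the drift lower-order with respect to $(-\Delta)^s$, allowing a perturbative analysis around the frictionless case treated in \cite{Caffarelli_Salsa_Silvestre_2008}.

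For existence, because the drift destroys symmetry, the obstacle problem is not directly variational, so I would proceed by penalization: solve the regularized equation $Lu_\varepsilon+\tfrac{1}{\varepsilon}(\varphi-u_\varepsilon)^+=0$ via a Schauder fixed-point argument in a weighted H\"older class, using Schauder-type estimates for the extension problem with a H\"older drift. Uniform $L^\infty$ bounds then follow from the hypothesis $(L\varphi)^+\in L^\infty$, which provides a global supersolution of the form $\varphi+C$, and one passes to the limit $\varepsilon\to 0$ to obtain a solution $u$ enjoying the regularity established below. Uniqueness, under the strengthened hypotheses that $b$ is Lipschitz and $c\ge c_0>0$, follows from a maximum-principle comparison: if $u_1, u_2$ are two solutions and $w=u_1-u_2$, then on the non-coincidence set of either solution one gets $Lw\le 0$ or $Lw\ge 0$ respectively; testing against the positive and negative parts of $w$, the coercive term $c_0\|w\|_{L^2}^2$ dominates the drift contribution thanks to the Lipschitz bound on $b$, forcing $w\equiv 0$.

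The main obstacle, and the heart of the paper, is the optimal $C^{1+s}$ regularity. I would tackle this via a blow-up analysis at free-boundary points: rescale $u$ at a contact point at scale $r$ with the Signorini scaling, in which the drift and zero-order contributions vanish in the limit, and aim to classify the blow-up limits as global $(1+s)$-homogeneous solutions of the pure thin-obstacle problem. The classical Caffarelli--Salsa--Silvestre argument is driven by Almgren's frequency monotonicity, but here that quantity is not monotone: the drift term, the zero-order term, and the fact that the ``extended obstacle'' inherits only H\"older regularity with non-removable singularities all contribute error terms. To compensate, I would construct a new monotonicity formula with corrective weights and cut-offs chosen so that these error terms integrate to finite perturbations, in the spirit of Garofalo--Petrosyan-type modifications of Almgren's formula. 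With the modified monotonicity in hand, the standard dichotomy (an upper frequency bound, extraction of $(1+s)$-homogeneous blow-ups, classification, and iteration) yields a uniform $C^{1+s}$ modulus of continuity at every contact point, which combined with the interior estimates away from the free boundary produces the global regularity. The principal technical hurdle will be the design of this monotonicity formula and the precise control of the drift/zero-order error terms at small scales.
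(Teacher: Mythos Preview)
Your outline is in the right spirit but diverges from the paper's actual route in several places, and one of your steps (uniqueness) is handled by a genuinely different mechanism.

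\textbf{Existence.} Both you and the paper use penalization. However, the paper does not invoke a Schauder fixed point in the extension; it works directly on $\RR^n$, first establishing Schauder estimates for the linear equation $Lu=f$ (via the model $(-\Delta)^s u+c_0 u=f$ and continuity), then solving $Lu_\eps=\beta_\eps(\varphi-u_\eps)$ by a \emph{monotone iteration} scheme rather than a fixed-point argument. The uniform bound on $\beta_\eps(\varphi-u_\eps)$ is obtained exactly as you say, from $(L\varphi)^+\in L^\infty$. Almost-optimal $C^{1+\alpha}$ regularity for all $\alpha<s$ then comes by a bootstrap that uses Silvestre's $C^{1+\alpha}$ result for the driftless problem.

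\textbf{Uniqueness.} Here your sketch is off. The paper does not use an energy comparison; it proves uniqueness by establishing the \emph{stochastic representation} $u(x)=\sup_\tau \EE^x[e^{-\int_0^\tau c(X(s))\,ds}\varphi(X(\tau))]$ via It\^o's formula for the L\'evy-driven SDE $dX=-b(X)\,dt+\int y\,\widetilde N(dt,dy)$. The Lipschitz assumption on $b$ is used only to get well-posedness of this SDE, and $c\ge c_0>0$ is needed so the discounted payoff is integrable when $\tau=\infty$. Your proposed $L^2$ comparison is delicate for a nonlocal, non-symmetric obstacle problem (the complementarity structure means $Lw$ has no sign on the symmetric difference of the coincidence sets), and the paper avoids this entirely.

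\textbf{Optimal regularity.} The paper does \emph{not} carry the drift through the extension and monotonicity formula as a perturbative error. Instead, it first eliminates the drift: localize with a cutoff $\eta$, set $g=f-\eta(b\cdot\nabla u+cu)$ (where $f$ is a commutator term), solve $(-\Delta)^s w=g$ explicitly by the Riesz kernel, and observe that $\eta u-w$ solves a \emph{driftless} fractional obstacle problem with obstacle $\eta\varphi-w$. The price is that this new obstacle is only $C^{2s+\alpha}$ for $\alpha<s$, not $C^{2,1}$ as in Caffarelli--Salsa--Silvestre. The monotonicity formula is then proved for this driftless problem; the novelty is not a drift error but the lower obstacle regularity, which forces the height function $v$ to satisfy $L_a v = h\,\cH^n|_{\{y=0\}}$ with $h\in C^\alpha$ a \emph{nonzero singular measure} even off the coincidence set. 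This is precisely what breaks the CSS comparison arguments, and the paper replaces them with Moser iteration (for $L^\infty$ bounds of rescalings and for the growth estimate at free-boundary points) and a Krylov-type iterated localization (for uniform $C^{1+\beta}$ bounds on $B'_{1/4}$). Your plan to design corrective weights to absorb drift errors in Almgren's formula is a different strategy; it may be workable, but it is not what the paper does, and you would still face the singular-measure issue once you pass to the height function.
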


We note that the properties of the fractional Laplacian operator with drift differ substantially depending whether the parameter $s$ takes values in the range $(0,1/2)$ (the supercritical regime), is equal to $1/2$ (the critical regime), or takes values in $(1/2,1)$ (the subcritical regime) \cite[\S 1]{Silvestre_2012a, Caffarelli_Vasseur_2010}. In the critical and subcritical regime, the fractional Laplacian operator with drift defines an elliptic pseudodifferential operator in the sense of \cite[\S 3.9]{Taylor_vol1}, and so, the drift component can be treated as a lower order term, a fact that we use extensively in our analysis of the obstacle problem \eqref{eq:Obstacle_problem}. In the supercritical regime, the operator $L$ is no longer elliptic and our analysis no longer applies. A study of the regularity of solutions in Sobolev spaces and of the Green's kernel of the stationary \emph{linear} equation defined by the fractional Laplacian with drift in the supercritical regime, can be found in \cite{Epstein_Pop_2013}.

The stationary obstacle problem defined by the fractional Laplacian operator \emph{without drift} was studied by Silvestre \cite{Silvestre_2007}, and by Caffarelli, Salsa and Silvestre \cite{Caffarelli_Salsa_Silvestre_2008}. In \cite{Silvestre_2007}, it is established the almost optimal regularity of solutions to the obstacle problem for the fractional Laplacian without drift \cite[Theorem 5.8]{Silvestre_2007}, that is, given an obstacle function, $\varphi\in C^{1+\beta}(\RR^n)$, the solution is shown to be $C^{1+\alpha}(\RR^n)$, for all $\alpha \in (0,\beta\wedge s)$. In \cite{Caffarelli_Salsa_Silvestre_2008}, the authors prove the optimal regularity of solutions \cite[Corollary 6.8]{Caffarelli_Salsa_Silvestre_2008}, that is, the solution $u$ belongs to $C^{1+s}(\RR^n)$, when the obstacle function, $\varphi$, is assumed to be in $C^{2,1}(\RR^n)$, and they establish the $C^{1,\alpha}$ regularity of the free boundary in a neighborhood of regular points \cite[Theorem 7.7]{Caffarelli_Salsa_Silvestre_2008}. 

In our work, we prove existence, uniqueness and optimal regularity of solutions to the stationary obstacle problem defined by the fractional Laplacian \emph{with drift}. In proving the existence and uniqueness of solutions, we take a different approach than in \cite{Silvestre_2007, Caffarelli_Salsa_Silvestre_2008}. Specifically, to obtain existence of solutions to the obstacle problem, we first study the linear and the penalized problem defined the fractional Laplacian with drift, which we solve by establishing a priori Schauder estimates and applying perturbation arguments. To obtain uniqueness of solutions, we use a probabilistic approach and we establish the stochastic representation of solutions. The stochastic representation of solutions is especially important in mathematical finance, where the value function of American-style options are given in the form of a stochastic representation. 

Our strategy in proving the optimal regularity of solutions is similar to that of \cite{Caffarelli_Salsa_Silvestre_2008}, but there are certain aspects in which the method of \cite{Caffarelli_Salsa_Silvestre_2008} is not applicable to our framework, which we now outline. In \S \ref{sec:Solutions_optimal_regularity}, we show that the obstacle problem \eqref{eq:Obstacle_problem} for the fractional Laplacian with drift can be reduced to one without drift, in which the obstacle function can only be assumed to belong to the space of functions $C^{2s+\alpha}(\RR^n)$, for all $\alpha\in (0,s)$, while in \cite{Caffarelli_Salsa_Silvestre_2008}, the obstacle function is assumed to have better regularity, i.e., it belongs to $C^{2,1}(\RR^n)$. Similarly to \cite{Caffarelli_Salsa_Silvestre_2008}, we construct a new monotonicity formula of Almgren type in Proposition \ref{prop:Monotonicity_formula}, which takes into account the limitation in regularity of the obstacle function. We then consider a suitable sequence of rescaled functions, for which we prove uniform estimates in H\"older spaces, and which we use together with the monotonicity formula, to establish the optimal regularity of solutions to the obstacle problem \eqref{eq:Obstacle_problem}. The arguments employed in \cite{Caffarelli_Salsa_Silvestre_2008} to establish the uniform estimates in H\"older spaces of the sequence of rescalings \cite[Proposition 4.3]{Caffarelli_Salsa_Silvestre_2008}, and to obtain the growth of the solution in a neighborhood of a free boundary point \cite[Lemma 6.5]{Caffarelli_Salsa_Silvestre_2008}, are not applicable to our case, due to the presence of a singular measure in the structure of our problem. Instead, our method of proof is based on a suitable application of the Moser iteration technique to obtain supremum and growth estimates (Lemma \ref{lem:Uniform_boundedness_rescalings} and Proposition \ref{prop:Growth_v_around_0}), and of a localization procedure described in \cite[Theorem 8.11.1]{Krylov_LecturesHolder} to obtain Schauder estimates (Lemma \ref{lem:Uniform_boundedness_C_1_alpha_n_ball_rescalings}). While Krylov uses the method described in \cite[Theorem 8.11.1]{Krylov_LecturesHolder} to obtain a priori Schauder estimates for solutions to a \emph{linear equation}, we apply it to obtain estimates for solutions to an \emph{obstacle problem}.

\subsection{Comparison with previous research}
\label{subsec:Previous_research}
We may compare our result on existence, uniqueness and almost optimal regularity of solutions to the stationary obstacle problem \eqref{eq:Obstacle_problem}, Proposition \ref{prop:Solutions_partial_regularity}, with the analogous results obtained by L. Silvestre, in \cite{Silvestre_2007}, for the obstacle problem defined by the fractional Laplacian operator \emph{without drift}, \cite[Complementarity conditions (1.1) and (1.2)]{Silvestre_2007}. Our results are proved in the subcritical regime, that is, the case when the parameter $s$ is contained in the range $(1/2,1)$, while the results of \cite{Silvestre_2007} hold for all $s\in (0,1)$, but the operator does not contain a drift component. The assumption used in our article that $s\in (1/2,1)$ plays an important role because it allows us to treat the drift component as a lower order term of the operator $L$. In \cite[\S 1.1]{Silvestre_2007}, existence of solutions in Sobolev spaces can be immediately obtained by variational methods, which do not seem readily applicable to our framework due to the presence of the lower order term. Instead, in \S \ref{sec:Solutions_partial_regularity}, we prove a priori Schauder estimates and we use a perturbation argument to obtain the existence of solutions in the H\"older space $C^{1+\alpha}(\RR^n)$, for some $\alpha\in (0,s)$. To improve the regularity of solutions, we begin with the fact that we know that the solutions to the obstacle problem are in $C^{1+\alpha}(\RR^n)$, and we use a bootstrap argument in conjunction to the almost optimal regularity of solutions proved in \cite[Theorem 5.8]{Silvestre_2007}, to establish  in Proposition \ref{prop:Solutions_partial_regularity} the almost optimal regularity of solutions to our obstacle problem \eqref{eq:Obstacle_problem}. 

We next compare our work with that of L. Caffarelli, S. Salsa and L. Silvestre \cite{Caffarelli_Salsa_Silvestre_2008}, who establish the optimal regularity of solutions to the obstacle problem defined by the fractional Laplacian operator \emph{without drift}, when the obstacle problem is assumed to belong to the space of functions $C^{2,1}(\RR^n)$ (see \cite[Corollary 6.8]{Caffarelli_Salsa_Silvestre_2008}). In \S \ref{sec:Monotonicity_formula}, we reduce our obstacle problem \eqref{eq:Obstacle_problem} to one without drift, but the obstacle function can at most be assumed to belong to $C^{2s+\alpha}(\RR^n)$, for all $\alpha\in (0,s)$, due to the presence of the drift component in the definition \eqref{eq:Operator} of the operator $L$. Analogous to \cite{Caffarelli_Salsa_Silvestre_2008}, we introduce an auxiliary `height' function, $v$, in \eqref{eq:Auxiliary_function_v}, and we consider a suitable $L_a$-extension of our nonlocal problem to a local one which satisfies conditions \eqref{eq:Upper_bound_L_a} and \eqref{eq:Equality_L_a}. Compared with \cite[Conditions (2.2)-(2.5)]{Caffarelli_Salsa_Silvestre_2008}, our extended problem has the property that $L_a v$ contains a singular measure on the set $\{y=0\}\backslash\{v=0\}$, which is not the case in \cite{Caffarelli_Salsa_Silvestre_2008}. This limitation in the regularity of the obstacle function, and the singularity appearing in our extended problem make many of the arguments used in \cite{Caffarelli_Salsa_Silvestre_2008} inapplicable to our framework. We prove a new monotonicity formula of Almgren type in \S \ref{sec:Monotonicity_formula}, and we replace the comparison arguments in \cite{Caffarelli_Salsa_Silvestre_2008} by adapting the Moser iterations method to our framework, in Lemma \ref{lem:Uniform_boundedness_rescalings} and Proposition \ref{prop:Growth_v_around_0}, and by applying the localization method of \cite[Theorem 8.11.1]{Krylov_LecturesHolder} to prove uniform Schauder estimates in Lemma \ref{lem:Uniform_boundedness_C_1_alpha_n_ball_rescalings}. A key role in our analysis plays the result of Proposition \ref{prop:Phi_at_0}, in which it is established a lower bound of the function $\Phi_v^p(r)$ defined in \eqref{eq:Phi}. Even though the definition of our function $\Phi_v^p(r)$ differs from its analogue in \cite[\S 3]{Caffarelli_Salsa_Silvestre_2008}, indirectly we appeal to the lower bounds established in \cite[Lemma 6.1]{Caffarelli_Salsa_Silvestre_2008} to derive our Proposition \ref{prop:Phi_at_0}.

\subsection{Outline of the article}
\label{subsec:Outline}
Our main result, Theorem \ref{thm:Solutions}, is proved in \S \ref{sec:Solutions}, which is organized in 
%TODO: Check the number of subsections
four subsections. We being in \S \ref{sec:Solutions_partial_regularity} by establishing the existence and uniqueness of solutions in H\"older spaces to the linear equation \eqref{eq:Linear_equation} defined by the operator $L$ (Lemma \ref{lem:Existence_uniqueness_linear_equation}). We use this result to solve the penalized equation (Lemma \ref{lem:Existence_penalized_equation}), which leads to the proof of the existence of solutions to the obstacle problem \eqref{eq:Obstacle_problem} (Proposition \ref{prop:Existence_Holder_obstacle_problem}). Via a bootstrap argument, we prove that the solutions constructed in Proposition \ref{prop:Existence_Holder_obstacle_problem} have the almost optimal regularity, that is, they belong to the space of functions $C^{1+\alpha}(\RR^n)$, for all $\alpha\in(0,s)$, when we assume that the obstacle function is contained in $C^{1+s}(\RR^n)\cap C_0(\RR^n)$ (Proposition \ref{prop:Solutions_partial_regularity}). In \S \ref{sec:Uniqueness}, we prove the uniqueness of solutions of the obstacle problem \eqref{eq:Obstacle_problem} by establishing that they admit a suitable stochastic representation (Proposition \ref{prop:Uniqueness}). We show how to reduce the obstacle problem for the fractional Laplacian with drift \eqref{eq:Obstacle_problem} to one \emph{without} drift in \S \ref{sec:Monotonicity_formula}. Using the extended problem introduced in \cite{Caffarelli_Silvestre_2007}, we introduce a suitable `height function', $v$, in \eqref{eq:Auxiliary_function_v}, and we prove a monotonicity formula (Propositions \ref{prop:Monotonicity_formula} and \ref{prop:Phi_at_0}), which we then use in \S \ref{sec:Solutions_optimal_regularity} to establish the growth of the function $v$ is a neighborhood of a free boundary point (Proposition \ref{prop:Growth_v_around_0}). Finally, we give the proof of the optimal regularity of solutions (Theorem \ref{thm:Solutions}). In \S \ref{sec:Auxiliary_results_proofs}, we give the proof of a series of auxiliary results, and of Proposition \ref{prop:Monotonicity_formula}. In \S \ref{subsec:Function_spaces}, we state the definitions of the spaces of functions, and in \S \ref{subsec:Notation}, we introduce the notations and conventions we use throughout our article.

\subsection{Function spaces}
\label{subsec:Function_spaces}
Let $k, m$ and $n$ be positive integers, and let $\Omega\subseteq\RR^n$ be an open set. We denote by $C^{\infty}_c(\Omega;\RR^m)$ the space of smooth functions, $u:\Omega\rightarrow\RR^m$, with compact support in $\Omega$. The space $C^k(\bar\Omega;\RR^m)$ consists of functions, $u:\bar\Omega\rightarrow\RR^m$, which admit derivatives up to order $k$, such that $u$ and its derivatives up to order $k$ are continuous and bounded on $\bar\Omega$. The space $C^k(\bar\Omega;\RR^m)$ endowed with the norm
$$
\|u\|_{C^k(\bar\Omega;\RR^m)} = \sup_{\stackrel{\beta\in\NN^n}{|\beta| \leq k}}\sup_{x\in \bar\Omega} |D^{\beta}u(x)|<+\infty,\quad\forall u \in C^k(\bar\Omega;\RR^m),
$$
is a Banach space. In the preceding definition, for all multi-indices $\beta\in\NN^n$, we let $|\beta|$ denote the sum of its components. 

Let $\alpha \in (0,1)$. The H\"older space $C^{k+\alpha}(\bar\Omega;\RR^m)$ consists of function $u\in C^k(\bar\Omega;\RR^m)$ satisfying the property that the seminorm
$$
\left[u\right]_{C^{k+\alpha}(\bar\Omega;\RR^m)}
:= \sup_{\stackrel{\beta\in\NN^n}{|\beta| \leq k}}
\sup_{\stackrel{x, y \in \bar\Omega}{x \neq y}} \frac{|D^{\beta} u(x)-D^{\beta} u(y)|}{|x-y|^{\alpha}} < +\infty,
\quad\forall u \in C^{k+\alpha}(\bar\Omega;\RR^m).
$$
The space $C^{k+\alpha}(\bar\Omega;\RR^m)$ endowed with the norm
$$
\|u\|_{C^{k+\alpha}(\bar\Omega;\RR^m)} = \|u\|_{C^k(\bar\Omega;\RR^m)}+ \left[u\right]_{C^{k+\alpha}(\bar\Omega;\RR^m)},
\quad\forall u \in C^{k+\alpha}(\bar\Omega;\RR^m),
$$
is a Banach space. We let $C^{k+\alpha}_0(\bar\Omega;\RR^m)$ be the closure of the space $C^{\infty}_c(\Omega;\RR^m)$ with respect to the norm $\|\cdot\|_{C^{k+\alpha}(\bar\Omega;\RR^m)}$. As usual, the space $C^{k+\alpha}_0(\bar\Omega;\RR^m)$ endowed with the norm $\|\cdot\|_{C^{k+\alpha}(\bar\Omega;\RR^m)}$ is a Banach space. The spaces $C^k_{\loc}(\RR^n;\RR^m)$ and $C^{k+\alpha}_{\loc}(\RR^n;\RR^m)$ consists of functions, $u:\RR^n\rightarrow\RR^m$, which belong to $C^k(K;\RR^m)$ and $C^{k+\alpha}(K;\RR^m)$, respectively, for all compact sets $K\subset\RR^n$. When $k=0$, we omit the superscript $k$ from the notation of the space $C^k(\bar\Omega;\RR^m)$, and when $m=1$, we write $C^k(\bar\Omega)$ instead of $C^k(\bar\Omega;\RR)$. The analogous convention applies to the spaces $C^{k+\alpha}(\bar\Omega;\RR^m)$, $C^k_{\loc}(\RR^n;\RR^m)$, and $C^{k+\alpha}_{\loc}(\RR^n;\RR^m)$.

\subsection{Notations and conventions}
\label{subsec:Notation}
We let $\cS(\RR^n)$ denote the Schwartz space \cite[Definition (3.3.3)]{Taylor_vol1} consisting of smooth functions whose
derivatives of all orders decrease faster than any polynomial at infinity, and we let $\cS'(\RR^n)$ denote its dual space, the space of tempered distributions \cite[\S 3.4]{Taylor_vol1}. We adopt the following definition of the Fourier transform  of a function $u\in\cS(\RR^n)$,
$$
\widehat u (\xi) = \int_{\RR^n} e^{-ix\dotprod\xi} u(x)\ dx,\quad\forall \xi\in\RR^n.
$$
Given real numbers, $a$ and $b$, we let $a \wedge b:=\min\{a,b\}$ and $a \vee b:=\max\{a,b\}$. If $v,w\in\RR^n$, we denote by $v\cdot w$ their scalar product. For $x_0\in\RR^{n+1}$ and $r>0$, let $B_r(x_0)$ be the Euclidean ball in $\RR^{n+1}$ of radius $r$ centered at $x_0$, and for $x_0\in\RR^n$ and $r>0$, let $B'_r(x_0)$ be the Euclidean ball in $\RR^n$ of radius $r$ centered at $x_0$. We denote by $B^+_r(x^0)$ the half-ball, $B_r(x^0)\cap\left(\RR^{n}\times\RR_+\right)$, where $\RR_+:=(0,\infty)$. For brevity, when $x_0=O$, we write $B_r$, $B'_r$ and $B^+_r$ instead of $B_r(O)$, $B'_r(O)$ and $B^+_r(O)$, respectively. 

For a set $S\subseteq\RR^n$, we denote its complement by $S^c:=\RR^n\backslash S$, and we let $\hbox{int } S$ denote its topological interior.

\section{Existence and optimal regularity of solutions}
\label{sec:Solutions}
In this section we prove the main result of our article, Theorem \ref{thm:Solutions}. We organize its content into three parts. In \S \ref{sec:Solutions_partial_regularity}, we prove the existence of solutions in H\"older spaces to the obstacle problem \eqref{eq:Obstacle_problem}, and we show that the solutions have the almost optimal regularity. In \S \ref{sec:Uniqueness}, we give sufficient conditions which ensure that the obstacle problem \eqref{eq:Obstacle_problem} has a unique solution in $C^{1+\alpha}(\RR^n)$, where $\alpha\in ((2s-1)\vee 0,1)$. We prove the uniqueness of solutions by establishing their stochastic representation. Even though in general we assume throughout our article that $s\in (1/2,1)$, the stochastic representation of solutions holds for all $s\in (0,1)$. In \S \ref{sec:Monotonicity_formula}, we prove a version of the monotonicity formula suitable for our operator, which is then used in \S \ref{sec:Solutions_optimal_regularity} to obtain the optimal regularity of solutions to the obstacle problem \eqref{eq:Obstacle_problem}.

\subsection{Existence and almost optimal regularity of solutions}
\label{sec:Solutions_partial_regularity}

In this section, we prove existence of solutions, $u$, to the obstacle problem \eqref{eq:Obstacle_problem} having almost optimal regularity, that is, $u \in C^{1+\beta}(\RR^n)$, for all $\beta \in (0,s)$, when the obstacle function, $\varphi$, is assumed to belong to the space of functions $C^{1+s}(\RR^n)\cap C_0(\RR^n)$. The main result of this section is

\begin{prop}[Existence, uniqueness and almost optimal regularity of solutions]
\label{prop:Solutions_partial_regularity}
Let $s \in (1/2,1)$ and $\alpha \in (0,1)$. Assume that the coefficient function $b \in C^{s}(\RR^n;\RR^n)$, and $c \in C^{s}(\RR^n)$ and satisfies condition \eqref{eq:Nonnegative_lower_bound_c}. Assume that the obstacle function $\varphi\in C^{1+\alpha}(\RR^n)\cap C_0(\RR^n)$ and satisfies condition \eqref{eq:Boundedness_L_phi_positive_part}. Then the obstacle problem \eqref{eq:Obstacle_problem} defined by the fractional Laplacian with drift has a solution, $u \in C^{1+\beta}(\RR^n)$, for all $\beta<\alpha\wedge s$.
\end{prop}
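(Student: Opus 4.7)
The plan is to follow the three-step scheme sketched in the introduction: (i) solve the linear equation $Lu=f$ in Hölder spaces; (ii) use this to solve a penalized equation $Lu_{\eps}=\beta_{\eps}(u_{\eps}-\varphi)$; (iii) pass to the limit $\eps\downarrow 0$ to obtain a solution to \eqref{eq:Obstacle_problem}; and finally upgrade the regularity by a bootstrap built on Silvestre's almost optimal regularity result \cite[Theorem~5.8]{Silvestre_2007}.

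\textbf{Step 1: Linear theory.} Since $s\in(1/2,1)$, the operator $(-\Delta)^s$ is the principal part of $L$ and the drift term $b\dotprod\nabla u$ is of strictly lower order in the sense of pseudodifferential calculus. This allows one to obtain interior Schauder estimates of the form
$$\|u\|_{C^{2s+\alpha}(\RR^n)} \leq C\bigl(\|Lu\|_{C^{\alpha}(\RR^n)} + \|u\|_{L^{\infty}(\RR^n)}\bigr)$$
by freezing coefficients, appealing to the classical Schauder theory for $(-\Delta)^s$, and applying a covering/scaling argument with the lower-order character of the drift. Coupled with the maximum principle (valid because $c\geq 0$), this yields unique solvability of $Lu=f$ in $C^{2s+\alpha}(\RR^n)$ for $f\in C^{\alpha}(\RR^n)$ vanishing at infinity, via the method of continuity joining $L$ to $(-\Delta)^s+1$; the case $c\geq 0$ (rather than $c\geq c_0>0$) is handled by adding $\eps u$ and passing $\eps\downarrow 0$.

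\textbf{Step 2: Penalization.} Fix a standard penalty $\beta_{\eps}\in C^{\infty}(\RR)$ with $\beta_{\eps}\leq 0$, $\beta_{\eps}'\geq 0$, $\beta_{\eps}(t)=0$ for $t\geq 0$, and $\beta_{\eps}(t)\to-\infty$ as $\eps\downarrow 0$ for every $t<0$. A Banach fixed-point argument in $C^{\alpha}(\RR^n)$, using the linear theory of Step~1 and the Lipschitz character of $\beta_{\eps}$ for fixed $\eps$, produces $u_{\eps}\in C^{2s+\alpha}(\RR^n)$ solving $Lu_{\eps}=\beta_{\eps}(u_{\eps}-\varphi)$. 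The key is a uniform estimate: testing against the maximum principle and using $c\geq 0$ together with the hypothesis $(L\varphi)^+\in L^{\infty}(\RR^n)$ gives a uniform $L^{\infty}$ bound on $u_{\eps}$ and, more importantly, a uniform $L^{\infty}$ bound on $\beta_{\eps}(u_{\eps}-\varphi)$. Feeding this back into the linear Schauder estimate (at the $C^{\gamma}$ level with $\gamma\in(0,\,2s-1)$, so that $2s+\gamma<2$ and the norm of $\beta_{\eps}(u_{\eps}-\varphi)$ can be controlled in $C^{\gamma}$ via interpolation) yields a uniform bound $\|u_{\eps}\|_{C^{1+\beta_0}(\RR^n)}\leq C$ for some $\beta_0>0$ independent of $\eps$.

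\textbf{Step 3: Passage to the limit and bootstrap.} By Arzelà--Ascoli and a diagonal extraction, $u_{\eps_k}\to u$ in $C^{1}_{\loc}(\RR^n)$ with $u\in C^{1+\beta_0}(\RR^n)$. The uniform $L^{\infty}$ bound on $\beta_{\eps}(u_{\eps}-\varphi)$ forces $u\geq\varphi$, while $Lu_{\eps}\leq 0$ at any point where $u_{\eps}<\varphi$ yields $Lu\geq 0$ in the limit (in the distributional, hence pointwise sense given the regularity), and $Lu=0$ on $\{u>\varphi\}$; together these give \eqref{eq:Obstacle_problem}. For the almost optimal regularity, rewrite \eqref{eq:Obstacle_problem} as
$$\min\bigl\{(-\Delta)^s u,\ u-\tilde\varphi\bigr\}=0,\qquad\tilde\varphi:=\varphi,$$
but with the additional datum $g:=-b\dotprod\nabla u-cu$ absorbed into a modified obstacle (or, equivalently, run the bootstrap on a perturbed obstacle $\varphi$ plus a corrector solving $(-\Delta)^s w=g$). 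Given $u\in C^{1+\beta_0}$, one has $g\in C^{\beta_0\wedge s}$, and hence the effective obstacle sits in $C^{1+\alpha'}$ with $\alpha'$ slightly better than $\beta_0$; Silvestre's Theorem~5.8 then gives $u\in C^{1+\alpha''}$ for any $\alpha''<\alpha'\wedge s$. Iterating finitely many times pushes the exponent up to any $\beta<\alpha\wedge s$.

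\textbf{Main obstacle.} The delicate point is obtaining the \emph{uniform} Hölder estimate in Step~2 that is strong enough ($C^{1+\beta_0}$ with $\beta_0>0$) to make the limiting equation classical on the non-coincidence set, despite the obstacle being only $C^{1+\alpha}$ (rather than $C^{2,1}$, as in \cite{Caffarelli_Salsa_Silvestre_2008}), and despite the presence of the non-variational drift which rules out energy methods. The trick is to exploit the subcritical assumption $s>1/2$ to treat $b\dotprod\nabla u$ as a genuinely lower-order perturbation, so that the purely nonlocal Schauder theory can be applied at each iteration; the bound $(L\varphi)^{+}\in L^{\infty}$ is precisely what is needed to prevent the penalty from blowing up.
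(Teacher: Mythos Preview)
Your overall strategy matches the paper's: Schauder theory for the linear equation via the method of continuity (the paper proves the maximum principle and sup estimate first, then uses Fourier analysis for the model operator $(-\Delta)^s+c_0$), penalization with a uniform $L^\infty$ bound on the penalty coming from the maximum principle and $(L\varphi)^+\in L^\infty$, passage to the limit to get a $C^{1+\beta_0}$ solution with $\beta_0\in(0,2s-1)$, and finally a bootstrap built on \cite[Theorem~5.8]{Silvestre_2007}. The paper constructs the penalized solution by a monotone iteration rather than a contraction, but this is inessential.

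Two points need repair. First, a sign: with your convention $Lu_\eps=\beta_\eps(u_\eps-\varphi)$ and $\beta_\eps\le 0$ you get $Lu_\eps\le 0$ everywhere, hence $Lu\le 0$ in the limit, which is the wrong inequality for \eqref{eq:Obstacle_problem}; the paper takes $\beta_\eps(t)=t^+/\eps$ and solves $Lu_\eps=\beta_\eps(\varphi-u_\eps)\ge 0$. Second, and more substantively, in your bootstrap you propose solving $(-\Delta)^s w=g$ on $\RR^n$ with $g=-b\cdot\nabla u-cu$, but $g$ has no decay at infinity (only $|g|\le C$), so the Riesz-potential representation $w=c_{n,s}\int g(y)|x-y|^{-(n-2s)}\,dy$ is not available and there is no obvious $w\in C^{2s+\alpha_0}(\RR^n)$ to subtract. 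The paper first multiplies $u$ by a cutoff $\eta$; the resulting right-hand side then consists of a compactly supported piece plus a commutator $\int(\eta(y)-\eta(x))(u(y)-u(x))|x-y|^{-(n+2s)}\,dy$ that decays like $|x|^{-(n+2s)}$ (Lemma~\ref{lem:Regularity_g}), so the Riesz potential $w$ is well defined, lies in $C^{2s+\alpha_0}(\RR^n)$, and $\eta u-w$ solves a pure $(-\Delta)^s$-obstacle problem with obstacle $\eta\varphi-w\in C^{(2s+\alpha_0)\wedge(1+\alpha)}$. Silvestre's theorem then applies, and each pass gains $2s-1$ in the H\"older exponent until one reaches $\alpha\wedge s$.
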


To prove Proposition \ref{prop:Solutions_partial_regularity}, we use a series of preliminary results. We first prove a maximum principle (Lemma \ref{lem:Comparison_principle}) which is used to obtain the existence and uniqueness of solutions in H\"older spaces to the linear equation \eqref{eq:Linear_equation} defined by the fractional Laplacian with drift (Lemma \ref{lem:Existence_uniqueness_linear_equation}). This result is applied to prove existence of solutions to the penalized equation \eqref{eq:Penalized_equation} defined by the fractional Laplacian with drift, which gives us the existence of solutions to the obstacle problem (Proposition \ref{prop:Existence_Holder_obstacle_problem}). The solutions we obtain at this point have less regularity than the one stated in the conclusion of Proposition \ref{prop:Solutions_partial_regularity}. A bootstrap argument is then used, together with Proposition \ref{prop:Existence_Holder_obstacle_problem}, to give the proof of Proposition \ref{prop:Solutions_partial_regularity}.

\subsubsection{The linear equation defined by the fractional Laplacian with drift}
\label{subsec:Linear_equation}
In this section we establish the existence and uniqueness in H\"older spaces of solutions to the linear equation defined by the fractional Laplacian with drift, 
\begin{equation}
\label{eq:Linear_equation}
Lu=f\quad\hbox{on } \RR^n,
\end{equation}
in the case when $s\in (1/2,1)$. 

\begin{lem}[Existence and uniqueness of solutions to the linear equation]
\label{lem:Existence_uniqueness_linear_equation}
Let $s\in(1/2,1)$. Let $\alpha \in (0,1)$ be such that $2s+\alpha$ is not an integer. Assume that the coefficient functions $b\in C^{\alpha}(\RR^n;\RR^n)$, and $c \in C^{\alpha}(\RR^n)$ and satisfies condition \eqref{eq:Lower_bound_c}. Then, there is a positive constant, $C=C(\alpha, \|b\|_{C^{\alpha}(\RR^n;\RR^n)}, \|c\|_{C^{\alpha}(\RR^n)}, c_0, n, s)$, such that for any source function, $f \in C^{\alpha}(\RR^n)$, there is a unique solution, $u \in C^{2s+\alpha}(\RR^n)$, to the linear equation \eqref{eq:Linear_equation}, and the function $u$ satisfies the Schauder estimate,
\begin{equation}
\label{eq:Holder_estimate}
\|u\|_{C^{2s+\alpha}(\RR^n)} \leq C\|f\|_{C^{\alpha}(\RR^n)}.
\end{equation}
\end{lem}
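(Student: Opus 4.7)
The plan is to treat the drift and potential terms $b \dotprod \nabla u + c u$ as lower-order perturbations of $(-\Delta)^s + c_0$, which is possible precisely because $s \in (1/2,1)$ makes $2s+\alpha$ strictly larger than $1+\alpha$, leaving room for interpolation. Uniqueness will come from the comparison principle established earlier (Lemma on the maximum principle), which under \eqref{eq:Lower_bound_c} forces any solution with zero right-hand side to vanish. Thus the work is concentrated on (i) the a priori Schauder estimate \eqref{eq:Holder_estimate} and (ii) existence via continuity.

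For the a priori estimate, I would rewrite \eqref{eq:Linear_equation} as
\[
(-\Delta)^s u + c_0 u = f - b \dotprod \nabla u - (c-c_0) u,
\]
and invoke the standard Schauder estimate for the constant-coefficient operator $(-\Delta)^s + c_0$ (which maps $C^{2s+\alpha}(\RR^n)$ isomorphically onto $C^{\alpha}(\RR^n)$, by the classical theory for the fractional Laplacian, cf.\ Silvestre's work) to conclude
\[
\|u\|_{C^{2s+\alpha}(\RR^n)} \leq C\bigl(\|f\|_{C^{\alpha}(\RR^n)} + \|b \dotprod \nabla u\|_{C^{\alpha}(\RR^n)} + \|(c-c_0)u\|_{C^{\alpha}(\RR^n)}\bigr).
\]
Using the product rule in H\"older spaces, the two perturbative terms are controlled by $C(\|b\|_{C^{\alpha}} + \|c\|_{C^{\alpha}})\|u\|_{C^{1+\alpha}(\RR^n)}$, where crucially only $\|u\|_{C^{1+\alpha}}$ appears because $b \in C^{\alpha}$ absorbs the extra regularity of $\nabla u$.

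The key step is then the interpolation inequality
\[
\|u\|_{C^{1+\alpha}(\RR^n)} \leq \eps \|u\|_{C^{2s+\alpha}(\RR^n)} + C_\eps \|u\|_{L^{\infty}(\RR^n)},
\]
valid for any $\eps>0$ because $1+\alpha < 2s+\alpha$. Absorbing the $\eps$-term on the left and using the maximum principle bound $\|u\|_{L^\infty} \leq \|f\|_{L^\infty}/c_0$ (which follows from \eqref{eq:Lower_bound_c} together with the comparison principle applied to $u - \|f\|_{L^\infty}/c_0$) yields the Schauder estimate \eqref{eq:Holder_estimate}.

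For existence, I would apply the method of continuity along the family $L_t := (1-t)\bigl((-\Delta)^s + c_0\bigr) + t L$ for $t \in [0,1]$. Each $L_t$ has the same structure as $L$, with coefficients $tb$ and $(1-t)c_0 + tc$ satisfying the standing hypotheses with a uniform lower bound $c_0$ on the potential, so the a priori estimate just proved gives
\[
\|u\|_{C^{2s+\alpha}(\RR^n)} \leq C\|L_t u\|_{C^{\alpha}(\RR^n)}
\]
with $C$ independent of $t$. Since $L_0 = (-\Delta)^s + c_0 : C^{2s+\alpha}(\RR^n) \to C^{\alpha}(\RR^n)$ is a bijection (for instance, by Fourier/resolvent arguments, as $((-\Delta)^s + c_0)^{-1}$ is explicitly given by a Bessel-type kernel, or by the Silvestre Schauder theory), the standard continuity argument extends invertibility to all $t\in[0,1]$, giving existence for $L = L_1$. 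The main obstacle is really the cleanliness of the interpolation step and the product-rule bookkeeping at the exponent $2s+\alpha$; the borderline case where $2s+\alpha$ is an integer is excluded by hypothesis precisely to avoid this.
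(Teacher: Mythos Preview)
Your proposal is correct and follows essentially the same approach as the paper: the paper also establishes the a priori Schauder estimate by treating $b\dotprod\nabla u + cu$ as lower order via interpolation (Lemma~\ref{lem:Schauder_estimate}), combines it with the supremum bound $\|u\|_{C(\RR^n)}\leq c_0^{-1}\|f\|_{C(\RR^n)}$ coming from the comparison principle (Lemma~\ref{lem:Sup_estimate}), and then runs the method of continuity from $L_0=(-\Delta)^s+c_0$ to $L$. The only place the paper is more explicit is in justifying that $L_0:C^{2s+\alpha}\to C^\alpha$ is onto: it first writes down the solution for $f\in C^\infty_c(\RR^n)$ via the Fourier multiplier $(|\xi|^{2s}+c_0)^{-1}$ and Riemann--Lebesgue, and then passes to general $f\in C^\alpha$ by approximation and Arzel\`a--Ascoli, rather than invoking surjectivity as a black box.
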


\begin{rmk}[Regularity of solutions to the linear equation]
The regularity of solutions in Sobolev spaces, as opposed to H\"older spaces, to the linear equation defined by the fractional Laplacian with drift in the suprecritical regime, that is the case when $s\in (0,1/2)$, has been established in \cite{Epstein_Pop_2013}, using methods specific to the theory of pseudodifferential operators. We remark that the supercritical case is more difficult to treat than the subcritical regime, $s\in (1/2,1)$, because the operator is \emph{not elliptic}. In the subcritical case, the diffusion component dominates the drift term, and so, the drift term can be treated as a lower-order perturbation. This is an important fact that we use in the proof of Lemma \ref{lem:Existence_uniqueness_linear_equation}, but which cannot be extended to the supercritical regime.
\end{rmk}

To prove Lemma \ref{lem:Existence_uniqueness_linear_equation}, we commence with

\begin{lem}[Comparison principle]
\label{lem:Comparison_principle}
Let $s\in (1/2,1)$. Assume that the coefficient function $b\in C(\RR^n;\RR^n)$, and $c\in C_{\loc}(\RR^n)$ and satisfies condition \eqref{eq:Lower_bound_c}. If $u \in C(\RR^n)\cap C^1_{\loc}(\RR^n)$ satisfies
\begin{equation}
\label{eq:Lu_nonnegative}
\left(-\Delta\right)^s u + b\dotprod\nabla u +c u \geq 0\quad\hbox{on }\RR^n,
\end{equation}
then
\begin{equation}
\label{eq:u_nonnegative}
u \geq 0 \quad\hbox{on }\RR^n.
\end{equation}
\end{lem}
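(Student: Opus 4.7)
The plan is to argue by contradiction and reduce to a one-point computation at a minimum. Suppose, for contradiction, that $\{u<0\}$ is nonempty. The heart of the argument is the following observation: if $u$ attains its global infimum at some $x_0\in\RR^n$ with $u(x_0)<0$, then the $C^1_{\loc}$ hypothesis gives $\nabla u(x_0)=0$, and the singular integral representation of the fractional Laplacian, together with the pointwise bound $u(x_0)\le u(y)$ for every $y\in\RR^n$, yields
\[
(-\Delta)^s u(x_0) = c_{n,s}\,\mathrm{p.v.}\!\int_{\RR^n}\frac{u(x_0)-u(y)}{|x_0-y|^{n+2s}}\,dy \le 0.
\]
Substituting these into \eqref{eq:Lu_nonnegative} and using $c(x_0)\ge c_0>0$, I would obtain
\[
0 \le (-\Delta)^s u(x_0) + b(x_0)\cdot\nabla u(x_0) + c(x_0) u(x_0) \le c_0\,u(x_0) < 0,
\]
which is the required contradiction.

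The technical obstacle is that nothing in the hypotheses a priori guarantees that $u$ attains its infimum. The standard remedy, which I would adopt here, is a barrier argument. For $\eps>0$, introduce a smooth positive function $\psi$ with $\psi(x)\to+\infty$ as $|x|\to\infty$, and consider $u_\eps:=u+\eps\psi$. If $\psi$ can be chosen so that $L\psi$ is bounded below by some constant $-K$ independent of $\eps$, then $Lu_\eps\ge -K\eps$ on $\RR^n$, and since $u$ is bounded below (a standing requirement for the pointwise value of $(-\Delta)^s u$ to be well defined), the growth of $\psi$ forces the infimum of $u_\eps$ to be attained at some $x_\eps\in\RR^n$. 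The single-point computation above, applied to $u_\eps$ at $x_\eps$, then gives $c(x_\eps)u_\eps(x_\eps)\ge -K\eps$, whence $u_\eps(x_\eps)\ge -K\eps/c_0$ and therefore $u(x)\ge -K\eps/c_0 -\eps\psi(x)$ for every fixed $x\in\RR^n$. Sending $\eps\to 0^+$ yields $u\ge 0$ pointwise.

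The main obstacle is the construction of the barrier $\psi$ with $L\psi$ bounded from below, and this is precisely where the subcritical hypothesis $s\in(1/2,1)$ enters decisively. A natural candidate is $\psi(x):=\sqrt{1+|x|^2}$, whose gradient is globally bounded and whose fractional Laplacian $(-\Delta)^s\psi$ is globally bounded because $2s>1$ exceeds the linear growth rate of $\psi$. The term $c\psi$ is nonnegative (thanks to $c\ge c_0>0$) and therefore does not contribute a lower obstruction, while $b\cdot\nabla\psi$ is bounded as soon as $b$ is bounded; the hypothesis that $b$ is merely continuous (not bounded) can be handled by a standard localization: one first restricts attention to a large ball $B_R$ on which $b$ and $c$ are bounded, performs the barrier argument there, and uses the fact that $u_\eps>0$ outside a sufficiently large ball to close the argument. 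In all of this, the ellipticity of $L$ in the subcritical regime means that the drift term $b\cdot\nabla\psi$ behaves as a genuine lower-order perturbation of the fractional diffusion, and it is this feature, already emphasized in the introduction, that makes the one-point contradiction argument go through.
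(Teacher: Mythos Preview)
Your approach is essentially the paper's: perturb $u$ by a barrier growing to infinity, locate a global minimum of the perturbed function, and read off a contradiction (or a quantitative bound) from the signs of the three terms in $L$ at that point. The paper takes $v(x)=(a+|x|^2)^p$ with $p\in(0,1/2)$ and chooses $a$ large enough that $Lv>0$ strictly, yielding an immediate contradiction rather than your $-K\eps/c_0$ bound; your $\psi(x)=\sqrt{1+|x|^2}$ is the borderline case $p=1/2$ and works just as well. One correction: in this paper's conventions $C(\RR^n;\RR^n)$ consists of \emph{bounded} continuous functions, so $b$ is bounded and your proposed localization---which would be delicate for a nonlocal operator, since restricting to $B_R$ does not localize $(-\Delta)^s$---is unnecessary.
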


\begin{proof}
We consider the auxiliary function,
\begin{equation}
\label{eq:Auxiliary_function}
v(x):=(a+|x|^2)^p,\quad\forall x\in\RR^n,
\end{equation}
where $p$ is a fixed number in the interval $(0,1/2)$, and the positive constant $a$ will be suitably chosen below. Direct calculations give us, for all $x\in\RR^n$ and $i,j=1,\ldots,n$, 
\begin{equation}
\label{eq:Derivatives_aux_function}
\begin{aligned}
v_{x_i}(x) &= 2p(a+|x|^2)^{p-1}x_i,\\
v_{x_ix_j} &= 4p(p-1)(a+|x|^2)^{p-1}x_ix_j + 2p(a+|x|^2)^{p-1}\delta_{ij},
\end{aligned}
\end{equation}
where $\delta_{ij}$ denoted the Kronecker delta symbol. Because $p\in (0,1/2)$ and $a>0$, we see that the derivatives $v_{x_i}$ and $v_{x_ix_j}$ belong to $C(\RR^n)$, and so, we have that $v_{x_i}\in C^{\beta}(\RR^n)$, for all $\beta\in(0,1)$ and for all $i=1,\ldots,n$. We can find a positive constant, $C=C(p)$, such that
$$
\|v_{x_i}\|_{C^{\beta}(\RR^n)} \leq C,\quad\forall a\geq 1,\quad\forall \beta\in (0,1).
$$
By choosing $\beta>2s-1$, we have that
\begin{equation}
\label{eq:Upper_bound_frac_lap_u}
\begin{aligned}
|(-\Delta)^s v(x)| &\leq \left|\int_{B'_1}\frac{v(x+y)-v(x)-\nabla v(x)\cdot y}{|y|^{n+2s}} \ dy \right|
+ \left|\int_{(B'_1)^c}\frac{v(x+y)-v(x)-\nabla v(x)\cdot y}{|y|^{n+2s}} \ dy \right|\\
&\leq C\int_{B'_1}\frac{|y|^{1+\beta}}{|y|^{n+2s}} \ dy + C\int_{(B'_1)^c}\frac{|y|}{|y|^{n+2s}} \ dy.
\end{aligned}
\end{equation}
The first and second integral on the right-hand side of \eqref{eq:Upper_bound_frac_lap_u} are finite because we have chosen $\beta\in (2s-1,1)$ and $s\in (1/2,1)$. We obtain that there is a positive constant, $C_0$, such that
\begin{align*}
|(-\Delta)^s v(x)|&\leq C_0,\quad\forall x\in \RR^n.
\end{align*}
Using the preceding inequality, identities \eqref{eq:Auxiliary_function} and \eqref{eq:Derivatives_aux_function}, and condition \eqref{eq:Lower_bound_c}, we have that
\begin{align*}
Lv &\geq -C_0 +\frac{2pb(x)\dotprod x+c_0(a+|x^2|)}{(a+|x|^2)^{1-p}}.
\end{align*}
By choosing $a=a(\|b\|_{L^{\infty}(\RR^n)}, c_0)\geq 1$ sufficiently large, we can ensure that $Lv >0$ on $\RR^n$. For $\eps>0$, we consider the auxiliary function,
$$
w_{\eps}:=u+\eps v.
$$
Then $Lw_{\eps} >0$ on $\RR^n$, and we see that the function $w_{\eps}$ tends to $ \infty$, as $|x|\rightarrow\infty$, by definition \eqref{eq:Auxiliary_function} of $v$ and the fact that $u$ is a bounded function. If $w_{\eps}$ is not nonnegative, there is a point $x_0\in\RR^n$ where the function $w_{\eps}$ attains a global minimum. We have that
$$
w_{\eps}(x_0)<0,\ \nabla w_{\eps}(x_0)=0,\quad\hbox{and}\quad (-\Delta)^sw_{\eps}(x_0)<0.
$$
Because $c \geq 0$ on $\RR^n$, we have that $L w_{\eps}(x_0)<0$, which contradicts the fact that $Lw_{\eps} >0$ on $\RR^n$. Therefore, $w_{\eps} \geq 0$ on $\RR^n$, for all $\eps \geq 0$, and so, we obtain inequality \eqref{eq:u_nonnegative} by letting $\eps$ tend to zero.
\end{proof}

The following supremum estimate is a consequence of Lemma \ref{lem:Comparison_principle}.

\begin{lem}[Supremum estimate]
\label{lem:Sup_estimate}
Assume that the hypotheses of Lemma \ref{lem:Comparison_principle} hold. If $f \in C(\RR^n)$ and $u \in C(\RR^n)\cap C^1_{\loc}(\RR^n)$ is a solution to problem \eqref{eq:Linear_equation}, then $u$ satisfies
\begin{equation}
\label{eq:Sup_estimate}
\|u\|_{C(\RR^n)} \leq \frac{1}{c_0}\|f\|_{C(\RR^n)}.
\end{equation}
\end{lem}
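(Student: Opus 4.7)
The plan is to derive the supremum bound directly from the comparison principle just proved in Lemma \ref{lem:Comparison_principle}, by sandwiching $u$ between two constants.

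First, set $M := \|f\|_{C(\RR^n)}/c_0$, which is finite by hypothesis, and define the two auxiliary functions
\begin{equation*}
w^+(x) := M - u(x), \qquad w^-(x) := M + u(x), \quad \forall x \in \RR^n.
\end{equation*}
Both functions inherit the regularity of $u$, so $w^\pm \in C(\RR^n) \cap C^1_{\loc}(\RR^n)$. Since $M$ is constant, $(-\Delta)^s M = 0$ and $\nabla M = 0$, so $L M = cM$ pointwise on $\RR^n$. Using the linearity of $L$ and the equation $Lu = f$, I compute
\begin{equation*}
L w^+ = cM - f, \qquad L w^- = cM + f.
\end{equation*}

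Next, I would use the lower bound \eqref{eq:Lower_bound_c}, which gives $c(x) \geq c_0 > 0$, together with $M \geq 0$, to conclude that $c(x) M \geq c_0 M = \|f\|_{C(\RR^n)}$. Hence $cM - f \geq \|f\|_{C(\RR^n)} - f \geq 0$ and $cM + f \geq \|f\|_{C(\RR^n)} + f \geq 0$ pointwise on $\RR^n$. Therefore both $w^+$ and $w^-$ satisfy the inequality \eqref{eq:Lu_nonnegative} required to apply Lemma \ref{lem:Comparison_principle}.

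Applying the comparison principle then yields $w^\pm \geq 0$ on $\RR^n$, i.e., $-M \leq u(x) \leq M$ for all $x \in \RR^n$, which is exactly the desired estimate \eqref{eq:Sup_estimate}. There is no serious obstacle here: all regularity hypotheses of Lemma \ref{lem:Comparison_principle} are preserved under adding a constant, and the sign condition on $c$ has been strengthened from \eqref{eq:Nonnegative_lower_bound_c} to \eqref{eq:Lower_bound_c}, which is precisely what makes the constant $M$ a supersolution/subsolution barrier.
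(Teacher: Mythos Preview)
Your proof is correct and is essentially identical to the paper's: both apply the comparison principle (Lemma \ref{lem:Comparison_principle}) to the functions $\pm u + \|f\|_{C(\RR^n)}/c_0$. Your computation $Lw^{\pm} = cM \pm f \geq \|f\|_{C(\RR^n)} \pm f \geq 0$ is in fact slightly more precise than the paper's, which writes the middle step as an equality rather than an inequality.
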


\begin{proof}
Estimate \eqref{eq:Sup_estimate} follows from the observation that
$$
L\left(\pm u+\frac{1}{c_0}\|f\|_{C(\RR^n)}\right) =\pm f+\|f\|_{C(\RR^n)} \geq 0,
$$
and an application of Lemma \ref{lem:Comparison_principle}.
\end{proof}

We have the following a priori estimates in H\"older spaces.
\begin{lem}[A priori Schauder estimates]
\label{lem:Schauder_estimate}
Let $s\in(1/2,1)$. Let $\alpha \in (0,1)$ be such that $2s+\alpha$ is not an integer. Assume that the coefficient functions $b\in C^{\alpha}(\RR^n;\RR^n)$ and $c \in C^{\alpha}(\RR^n)$. Then, there is a positive constant, $C=C(\alpha, \|b\|_{C^{\alpha}(\RR^n;\RR^n)}, \|c\|_{C^{\alpha}(\RR^n)}, c_0, n, s)$, such that for any source function, $f \in C^{\alpha}(\RR^n)$, and any solution, $u \in C^{2s+\alpha}(\RR^n)$, to the linear equation \eqref{eq:Linear_equation}, the function $u$ satisfies the estimate
\begin{equation}
\label{eq:Holder_estimate_prim}
\|u\|_{C^{2s+\alpha}(\RR^n)} \leq C\left(\|f\|_{C^{\alpha}(\RR^n)}+\|u\|_{C(\RR^n)}\right).
\end{equation}
\end{lem}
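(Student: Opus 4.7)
The plan is to treat the first-order term $b\cdot\nabla u$ and the zeroth-order term $cu$ as lower-order perturbations and reduce the estimate to the standard Schauder estimate for the pure fractional Laplacian $(-\Delta)^s$ on $\RR^n$. Concretely, I would rewrite the equation $Lu=f$ as
$$
(-\Delta)^s u = f - b\dotprod\nabla u - cu =: g\quad\text{on }\RR^n,
$$
and apply the known Schauder estimate (e.g.\ from the theory in \cite{Silvestre_2007} or \cite{Caffarelli_Silvestre_2007}) for the translation-invariant operator $(-\Delta)^s$:
$$
\|u\|_{C^{2s+\alpha}(\RR^n)} \leq C_0\bigl(\|g\|_{C^{\alpha}(\RR^n)} + \|u\|_{C(\RR^n)}\bigr),
$$
valid whenever $2s+\alpha\notin\NN$, with $C_0=C_0(n,s,\alpha)$.

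Next I would bound $\|g\|_{C^{\alpha}(\RR^n)}$ in terms of the data and $u$. Since $b\in C^{\alpha}(\RR^n;\RR^n)$ and $c\in C^{\alpha}(\RR^n)$, the usual product estimate in Hölder spaces gives
$$
\|g\|_{C^{\alpha}(\RR^n)} \leq \|f\|_{C^{\alpha}(\RR^n)} + C\|b\|_{C^{\alpha}(\RR^n;\RR^n)}\|\nabla u\|_{C^{\alpha}(\RR^n;\RR^n)} + C\|c\|_{C^{\alpha}(\RR^n)}\|u\|_{C^{\alpha}(\RR^n)},
$$
where $C=C(n)$. Note that because $s>1/2$ we have $2s+\alpha > 1+\alpha > \alpha$, so both norms $\|u\|_{C^{1+\alpha}(\RR^n)}$ (which controls $\|\nabla u\|_{C^{\alpha}}$) and $\|u\|_{C^{\alpha}(\RR^n)}$ lie strictly between $\|u\|_{C(\RR^n)}$ and $\|u\|_{C^{2s+\alpha}(\RR^n)}$ in the Hölder scale. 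This is the crucial point where the subcritical assumption $s>1/2$ enters: it guarantees room to absorb the drift term.

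I would then invoke the standard Hölder-space interpolation inequality: for $0<\beta_1<\beta_2$ with $\beta_1,\beta_2\notin\NN$ and any $\eps>0$, there exists $C_\eps$ such that
$$
\|u\|_{C^{\beta_1}(\RR^n)} \leq \eps\|u\|_{C^{\beta_2}(\RR^n)} + C_\eps\|u\|_{C(\RR^n)}.
$$
Applying this with $(\beta_1,\beta_2)=(1+\alpha,2s+\alpha)$ and $(\beta_1,\beta_2)=(\alpha,2s+\alpha)$ yields
$$
\|u\|_{C^{1+\alpha}(\RR^n)} + \|u\|_{C^{\alpha}(\RR^n)} \leq \eps\|u\|_{C^{2s+\alpha}(\RR^n)} + C_\eps\|u\|_{C(\RR^n)}.
$$
Substituting into the Schauder bound for $(-\Delta)^s$ and choosing $\eps$ small enough (depending on $\|b\|_{C^{\alpha}}$, $\|c\|_{C^{\alpha}}$, $n$, $s$, $\alpha$) lets us absorb $\eps\|u\|_{C^{2s+\alpha}(\RR^n)}$ into the left-hand side, producing \eqref{eq:Holder_estimate_prim}.

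The only potentially delicate step is invoking the right form of the Schauder estimate for $(-\Delta)^s$ on $\RR^n$ at Hölder exponents $2s+\alpha\in(1,3)$, but this is by now classical (after Silvestre and Caffarelli--Silvestre), provided one avoids integer exponents $2s+\alpha\in\{1,2\}$, which is excluded by hypothesis. Everything else is bookkeeping: product estimates in $C^{\alpha}$ and Hölder interpolation are standard, and there is no need for anything beyond treating $b\dotprod\nabla u + cu$ as a perturbation of the leading pseudodifferential term, justified by $s>1/2$.
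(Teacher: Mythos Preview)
Your proposal is correct and follows essentially the same approach as the paper: rewrite $(-\Delta)^s u = f - b\cdot\nabla u - cu$, apply the Schauder estimate for the pure fractional Laplacian (the paper cites \cite[Proposition~2.8]{Silvestre_2007}), bound the lower-order terms using $b,c\in C^{\alpha}$, and then absorb them via H\"older interpolation (the paper cites \cite[Theorems~3.2.1 \& 8.8.1]{Krylov_LecturesHolder}), exploiting $2s>1$ so that $1+\alpha<2s+\alpha$.
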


\begin{proof}
By \cite[Proposition 2.8]{Silvestre_2007}, we obtain that
$$
\|u\|_{C^{2s+\alpha}(\RR^n)} \leq C\left(\|(-\Delta)^s u\|_{C^{\alpha}(\RR^n)}+\|u\|_{C(\RR^n)}\right),
$$
where $C=C(\alpha,n,s)$ is a positive constant. Using the fact that the coefficient functions $b \in C^{\alpha}(\RR^n;\RR^n)$ and $c\in C^{\alpha}(\RR^n)$, and the Interpolation inequalities \cite[Theorems 3.2.1 \& 8.8.1]{Krylov_LecturesHolder} together with the fact that $2s>1$, we obtain that, for any $\eps>0$, there is a positive constant, $C=C(\alpha, \|b\|_{C^{\alpha}(\RR^n;\RR^n)}, \|c\|_{C^{\alpha}(\RR^n)}, c_0, \eps, n, s)$, such that
$$
\|u\|_{C^{2s+\alpha}(\RR^n)} \leq \eps \|u\|_{C^{2s+\alpha}(\RR^n)} +C\left(\|L u\|_{C^{\alpha}(\RR^n)}+\|u\|_{C(\RR^n)}\right).
$$
Choosing $\eps=1/2$, we obtain the a priori Schauder estimate \eqref{eq:Holder_estimate_prim}.
\end{proof}

We can now prove the existence and uniqueness of solutions in H\"older spaces to the linear equation \eqref{eq:Linear_equation} defined by the fractional Laplacian with drift.

\begin{proof}[Proof of Lemma \ref{lem:Existence_uniqueness_linear_equation}]
Uniqueness of solutions follows from Lemma \ref{lem:Sup_estimate}. We first assume that the function $f$ is in $C^{\infty}_c(\RR^n)$, and prove the existence of solutions, $u \in C^{2s+\alpha}_0(\RR^n)$, to the simpler equation,
\begin{equation}
\label{eq:L_0}
L_0 u = (-\Delta)^s u +c_0 u =f\quad\hbox{on } \RR^n.
\end{equation}
Taking the Fourier transform in equation \eqref{eq:L_0}, and using the fact that
$$
\widehat{(\Delta)^s v}(\xi) = |\xi|^{2s} \widehat v(\xi),\quad\forall \xi\in\RR^n, \quad\forall v \in \cS(\RR^n),
$$
we set
$$
u(x):=(2\pi)^{-n} \int_{\RR^n}e^{i\xi x}\frac{1}{|\xi|^{2s}+c_0}\widehat f(\xi)\ d\xi,\quad\forall x\in\RR^n.
$$
We want to prove that $u \in C^{\infty}_0(\RR^n)$, and that $u$ solves equation \eqref{eq:L_0}. Because $f \in C^{\infty}_c(\RR^n)$, we have that $\widehat f \in \cS(\RR^n)$, and so $1/(|\xi|^{2s}+c_0)\widehat f(\xi) \in L^1(\RR^n)$. The Riemann-Lebesgue lemma \cite[Theorem 8.22 f]{Folland_realanalysis} shows that $u\in C_0(\RR^n)$. We can apply the same argument to any derivative, $D^{\alpha} u$, for all multi-indices $\alpha\in \NN^{n}$, to deduce that $u \in C^{\infty}_0(\RR^n)$, and so the function $u$ belongs to the space $C^{2s+\alpha}_0(\RR^n)$, and $u$ is a solution to equation \eqref{eq:L_0}. Using now the a priori Schauder estimate \eqref{eq:Holder_estimate_prim} together with the supremum estimate \eqref{eq:Sup_estimate}, we obtain that $u$ satisfies inequality \eqref{eq:Holder_estimate}.

We now show that equation \eqref{eq:L_0} has a  solution, $u\in C^{2s+\alpha}(\RR^n)$, for any choice of the source function, $f\in C^{\alpha}(\RR^n)$. We approximate $f$ by a sequence of functions, $\{f_k\}_{k\geq 0}\subset C^{\infty}_c(\RR^n)$, in the sense that
\begin{align*}
&f_k(x)\rightarrow f(x),\quad\hbox{as } k\rightarrow\infty,\quad\forall x\in \RR^n,\\
&\sup_{k\geq 0} \|f_k\|_{C^{\alpha}(\RR^n)} <\infty.
\end{align*}
For each $k \geq 0$, we let $u_k\in C^{2s+\alpha}_0(\RR^n)$ be the unique solution to the equation $L_0 u_k =f_k$ on $\RR^n$. Using estimate \eqref{eq:Holder_estimate}, the Arzel\'a-Ascoli Theorem gives that there is a subsequence, $\{u_k\}_{k\geq 0}$, which for simplicity we denote the same as the initial sequence, which converges to a solution, $u\in C^{2s+\alpha}(\RR^n)$, to equation \eqref{eq:L_0}. The convergence takes place uniformly in $C^{2s+\beta}(K)$, for all $\beta\in (0,\alpha)$, and all compact sets $K\subset \RR^n$.

Because we assume that the coefficient functions $b\in C^{\alpha}(\RR^n;\RR^n)$ and $c\in C^{\alpha}(\RR^n)$, by \cite[Proposition 2.5]{Silvestre_2007}, we see that the operator $L:C^{2s+\alpha} \rightarrow C^{\alpha}(\RR^n)$ is well-defined. Thus, with the aid of the Schauder estimate \eqref{eq:Holder_estimate}, and the existence and uniqueness of solutions in H\"older spaces to the model equation \eqref{eq:L_0}, we can use the method of continuity to prove existence of solutions in H\"older spaces to equation \eqref{eq:Linear_equation}.
\end{proof}

\subsubsection{The penalized equation}
\label{subsec:Penalized_equation}
Before proving existence of solutions to the obstacle problem \eqref{eq:Obstacle_problem} defined by the fractional Laplacian with drift, we first prove existence of solutions to the penalized equation,
\begin{equation}
\label{eq:Penalized_equation}
L u = \beta_{\eps}(\varphi-u)\quad\hbox{on }\RR^n,
\end{equation}
where $\beta_{\eps}:\RR\rightarrow [0,\infty)$ is defined by $\beta_{\eps}(t) = t^+/\eps$, for all $t\in\RR$, and $\eps$ is any positive constant.

\begin{lem}[Existence of solutions to the penalized equation]
\label{lem:Existence_penalized_equation}
Let $s\in(1/2,1)$. Let $\alpha \in (0,1)$ be such that $2s+\alpha$ is not an integer. Assume that the obstacle function $\varphi\in C^{\alpha}(\RR^n)$, and the coefficient functions $b \in C^{\alpha}(\RR^n;\RR^n)$, and $c \in C^{\alpha}(\RR^n)$ satisfies condition \eqref{eq:Nonnegative_lower_bound_c}. Then there is a solution, $u_{\eps}\in C^{2s+\alpha}(\RR^n)$, to the penalized equation \eqref{eq:Penalized_equation}. 
\end{lem}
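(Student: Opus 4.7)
The plan is to solve the penalized equation by a monotone iteration in $C^{2s+\alpha}(\RR^n)$ built on Lemma \ref{lem:Existence_uniqueness_linear_equation}. Since that lemma requires the zeroth-order coefficient to be bounded below by a positive constant while here only $c\geq 0$ is assumed, I first rewrite \eqref{eq:Penalized_equation} by adding $u/\eps$ to both sides,
\begin{equation*}
\tilde{L}u:=Lu+\frac{1}{\eps}u=\frac{1}{\eps}\max\{u,\varphi\}=:G_{\eps}(u),
\end{equation*}
so that $\tilde{L}$ has zeroth-order coefficient $c+1/\eps\geq 1/\eps>0$ and the nonlinearity $G_{\eps}$ is pointwise nondecreasing and $1/\eps$-Lipschitz.

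For $v\in C^{\alpha}(\RR^n)$, Lemma \ref{lem:Existence_uniqueness_linear_equation} applied to $\tilde{L}$ produces a unique $T(v)\in C^{2s+\alpha}(\RR^n)$ solving $\tilde{L}u=G_{\eps}(v)$, and the comparison principle (Lemma \ref{lem:Comparison_principle}) shows $T$ is order-preserving. Choosing $M\geq\|\varphi\|_{C(\RR^n)}$, the constants $\pm M$ are respectively sub- and supersolutions: since $c\geq 0$, one has $\tilde{L}(-M)=-cM-M/\eps\leq -M/\eps\leq \varphi/\eps=G_{\eps}(-M)$ and $\tilde{L}(M)\geq M/\eps=G_{\eps}(M)$. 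Setting $u^{(0)}=-M$ and $u^{(k+1)}=T(u^{(k)})$, monotonicity of $T$ together with comparison against $\pm M$ gives a pointwise nondecreasing sequence trapped in $[-M,M]$, hence converging pointwise to some $u^{\ast}\in [-M,M]$.

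To promote this limit to a $C^{2s+\alpha}$ solution I need a uniform-in-$k$ Schauder bound. Applying \eqref{eq:Holder_estimate} to $\tilde{L}$ yields
\begin{equation*}
\|u^{(k+1)}\|_{C^{2s+\alpha}(\RR^n)}\leq \frac{C_{\eps}}{\eps}\|\max\{u^{(k)},\varphi\}\|_{C^{\alpha}(\RR^n)}\leq \frac{C_{\eps}}{\eps}\bigl(\|u^{(k)}\|_{C^{\alpha}(\RR^n)}+\|\varphi\|_{C^{\alpha}(\RR^n)}\bigr),
\end{equation*}
where the control of $\max\{f,g\}$ in $C^{\alpha}$ uses the identity $\max(f,g)=\tfrac12(f+g)+\tfrac12|f-g|$ and the fact that $u\mapsto|u|$ does not enlarge $C^{\alpha}$ seminorms. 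Combined with the interpolation inequality \cite[Theorem 3.2.1]{Krylov_LecturesHolder}, $\|u^{(k)}\|_{C^{\alpha}}\leq \delta\|u^{(k)}\|_{C^{2s+\alpha}}+C_{\delta}\|u^{(k)}\|_{C}$, and the uniform sup bound $\|u^{(k)}\|_{C(\RR^n)}\leq M$, choosing $\delta$ small enough that $C_{\eps}\delta/\eps\leq 1/2$ gives the geometric recursion $\|u^{(k+1)}\|_{C^{2s+\alpha}}\leq \tfrac12\|u^{(k)}\|_{C^{2s+\alpha}}+K_{\eps}$, hence a uniform bound independent of $k$.

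Arzel\`a--Ascoli on each ball and lower semicontinuity of the $C^{2s+\alpha}$ seminorm under pointwise convergence then give $u^{\ast}\in C^{2s+\alpha}(\RR^n)$ with $u^{(k)}\to u^{\ast}$ in $C^{2s+\alpha'}_{\loc}(\RR^n)$ for every $\alpha'<\alpha$. Passing to the pointwise limit in $\tilde{L}u^{(k+1)}=G_{\eps}(u^{(k)})$: the drift and zeroth-order terms converge by local $C^{1}$ convergence, the right-hand side by continuity of $\max$, and the singular integral $(-\Delta)^{s}u^{(k+1)}(x)$ by splitting at radius one about $x$ and using the Taylor-type bound $|u(y)-u(x)-\nabla u(x)\cdot(y-x)|\leq[u]_{C^{1+\beta}}|y-x|^{1+\beta}$ with $\beta=2s+\alpha'-1>2s-1$ on the near part (so the integrand is dominated by $|y-x|^{\beta-2s}$ integrably) and dominated convergence against the uniform sup bound on the far part. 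Hence $u^{\ast}$ solves \eqref{eq:Penalized_equation}. The main obstacle is the uniform Schauder bound: the Schauder estimate alone involves the $C^{\alpha}$ norm of the source $G_{\eps}(u^{(k)})$, which grows with $\|u^{(k)}\|_{C^{\alpha}}$, so without the interpolation trick trading $C^{\alpha}$ for a small multiple of $C^{2s+\alpha}$ one would obtain only geometrically growing bounds on the iterates.
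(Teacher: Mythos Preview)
Your proof is correct and follows essentially the same strategy as the paper: rewrite \eqref{eq:Penalized_equation} as $\tilde L u = \eps^{-1}\max\{u,\varphi\}$ with $\tilde L = L + \eps^{-1}$, then run a monotone iteration using Lemma~\ref{lem:Existence_uniqueness_linear_equation} and the comparison principle, and pass to the limit via a uniform $C^{2s+\alpha}$ bound and Arzel\`a--Ascoli. The paper starts the iteration at $\underline u = 0$ with barrier $\overline u = \|\varphi\|_{C(\RR^n)}$ rather than $\pm M$, but this is cosmetic.

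The one point where your argument genuinely deviates is how you obtain the uniform-in-$k$ Schauder bound. The paper routes through \cite[Proposition~2.9]{Silvestre_2007} to first bound $\|u_{k-1}\|_{C^{1+\beta}}$ (for $\beta\in(0,2s-1)$) in terms of $\|(-\Delta)^s u_{k-1}\|_{C(\RR^n)}$, which the equation controls by sup-norm data alone; this gives a uniform $C^\alpha$ bound on $u_{k-1}$, hence a uniform $C^{2s+\alpha}$ bound on $u_k$. Your approach is more direct: you interpolate $\|u^{(k)}\|_{C^\alpha}\le \delta\|u^{(k)}\|_{C^{2s+\alpha}} + C_\delta\|u^{(k)}\|_{C}$ straight into the Schauder estimate to obtain the contractive recursion $\|u^{(k+1)}\|_{C^{2s+\alpha}}\le \tfrac12\|u^{(k)}\|_{C^{2s+\alpha}} + K_\eps$. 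This avoids the detour through $C^{1+\beta}$ regularity entirely and is arguably cleaner; the paper's route, on the other hand, makes the dependence on the structure of the operator (specifically on $2s>1$ via Silvestre's proposition) more explicit.
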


\begin{proof}
Let $\eps>0$, and consider the operator
$$
L_{\eps} v := Lv + \frac{1}{\eps} v,\quad\forall v \in C^2(\RR^n).
$$
We also let
\begin{equation}
\label{eq:Definition_gamma_eps}
\begin{aligned}
\gamma_{\eps}(v):=\beta_{\eps}(\varphi-v)+\frac{1}{\eps}v
= \frac{1}{\eps}
\begin{cases}
\varphi,&\quad\hbox{if }\varphi>v,\\
v,&\quad\hbox{if }\varphi\leq v.
\end{cases}
\end{aligned}
\end{equation}
We notice that $\gamma_{\eps}$ is a non-decreasing function. We make use of the monotonicity of the nonlinear term $\gamma_{\eps}$, to build a sequence of functions which converges to a solution to the penalized equation \eqref{eq:Penalized_equation}. We let
$$
\underline u = 0,\quad\hbox{and}\quad\overline u = \|\varphi\|_{C(\RR^n)}. 
$$
The functions $\underline u$ and $\overline u$ are chosen such that 
\begin{align*}
\underline u \leq \overline u,\quad L_{\eps} \underline u \leq \gamma_{\eps}(\underline u),\quad\hbox{and}\quad L_{\eps} \overline u \geq \gamma_{\eps}(\overline u).
\end{align*}
We construct iteratively a sequence of functions, $\{u_k\}_{k\geq 0}\subset C^{2s+\alpha}(\RR^n)$, which converge to a solution to the penalized equation \eqref{eq:Penalized_equation}. Let $u_0:=\underline u$, and let $u_k\in C^{2s+\alpha}(\RR^n)$ be the unique solution given by Lemma \ref{lem:Existence_uniqueness_linear_equation} to the linear equation
\begin{equation}
\label{eq:L_eps_equation}
L_{\eps} u_k=\gamma_{\eps}(u_{k-1}),\quad \forall k\geq 1.
\end{equation}
Because we assume that $u_{k-1}\in C^{2s+\alpha}(\RR^n)$ and $\varphi \in C^{\alpha}(\RR^n)$, we see that $\gamma_{\eps}(u_{k-1})\in C^{\alpha}(\RR^n)$, and so, we can apply Lemma \ref{lem:Existence_uniqueness_linear_equation} to build the function $u_k$. Notice that the operator $L_{\eps}$ satisfies the assumptions of Lemma \ref{lem:Existence_uniqueness_linear_equation} because we assume that the coefficient functions  $b\in C^{\alpha}(\RR^n;\RR^n)$, and $c\in C^{\alpha}(\RR^n)$ and satisfies condition \eqref{eq:Nonnegative_lower_bound_c}, and $\eps>0$. We use Lemma \ref{lem:Comparison_principle} to prove inductively that the sequence of solutions, $\{u_k\}_{k\geq 0}$, is non-decreasing, and
\begin{equation}
\label{eq:u_k_increasing}
\underline u=:u_0\leq u_1\leq\ldots\leq u_k \leq \overline u,\quad\forall k \in \NN.
\end{equation}
For $k=1$, we see that the following sequence of inequalities hold
$$
L_{\eps} \underline u \leq \gamma_{\eps}(\underline u) = \gamma_{\eps}(u_0) = L_{\eps} u_1 \leq \gamma_{\eps}(\overline u) \leq L_{\eps} \overline u,
$$
and so, Lemma \ref{lem:Comparison_principle} gives us that the inequality $\underline u\leq u_1 \leq \overline u$ holds. Let now $k\geq 2$, and assume that inequalities \eqref{eq:u_k_increasing} hold with $k$ replaced by $k-1$. Then the monotonicity of $\gamma_{\eps}$ implies that 
\begin{equation*}
\gamma_{\eps}(\underline u)\leq \gamma_{\eps}(u_1)\leq\ldots\leq \gamma_{\eps}(u_{k-1}) \leq \gamma_{\eps}(\overline u),
\end{equation*}
and we have that $L_{\eps} u_{k-1} \leq L_{\eps} u_k \leq L_{\eps} \overline u$. The preceding inequality and Lemma \ref{lem:Comparison_principle} imply that \eqref{eq:u_k_increasing} holds, and the sequence of functions $\{u_k\}_{k\in\NN}$ satisfies
\begin{equation}
\label{eq:Uniform_bound_u_k_eps}
0\leq u_k\leq\|\varphi\|_{C(\RR^n)},\quad\forall k \in\NN.
\end{equation}
From the a priori Schauder estimates \eqref{eq:Holder_estimate}, we have that  
\begin{equation}
\label{eq:Schauder_estimate_u_k_eps}
\|u_k\|_{C^{2s+\alpha}(\RR^n)} \leq C \|\gamma_{\eps}(u_{k-1})\|_{C^{\alpha}(\RR^n)},\quad\forall k \geq 1,
\end{equation}
where $C=C(\alpha, \|b\|_{C^{\alpha}(\RR^n;\RR^n)}, \|c\|_{C^{\alpha}(\RR^n)},\eps,n,s)$ is a positive constant. From definition \eqref{eq:Definition_gamma_eps} of the nonlinear term $\gamma_{\eps}$, we obtain that there is a positive constant, $C=C(\eps)$, such that
\begin{equation}
\label{eq:Holder_norm_gamma_eps}
\|\gamma_{\eps}(u_{k-1})\|_{C^{\alpha}(\RR^n)} \leq C\left(\|u_{k-1}\|_{C^{\alpha}(\RR^n)} + \|\varphi\|_{C^{\alpha}(\RR^n)}\right),\quad\forall k \geq 1,
\end{equation}
and inequalities \eqref{eq:u_k_increasing} and \eqref{eq:Uniform_bound_u_k_eps} give us that
\begin{equation}
\label{eq:Sup_norm_gamma_eps}
\|\gamma_{\eps}(u_{k-1})\|_{C(\RR^n)} \leq C\|\varphi\|_{C(\RR^n)},\quad\forall k \geq 1.
\end{equation}
From \cite[Proposition 2.9]{Silvestre_2007}, we obtain that for any $\beta\in(0,2s-1)$, there is a positive constant, $C=(\beta,n,s)$, such that
$$
\|u_{k-1}\|_{C^{1+\beta}(\RR^n)} \leq C\left(\|(-\Delta)^s u_{k-1}\|_{C(\RR^n)} + \|u_{k-1}\|_{C(\RR^n)}\right),\quad\forall k \geq 1,
$$
which combined with estimates \eqref{eq:Schauder_estimate_u_k_eps} and \eqref{eq:Uniform_bound_u_k_eps}, equation \eqref{eq:L_eps_equation}, and definition of the operator $L_{\eps}$, gives us that
\begin{align*}
\|u_{k-1}\|_{C^{1+\beta}(\RR^n)} &\leq C\left(\|\gamma_{\eps} (u_{k-1})\|_{C(\RR^n)}+\|Du_{k-1}\|_{C(\RR^n)} + \|u_{k-1}\|_{C(\RR^n)}\right),\quad\forall k \geq 1,
\end{align*}
where $C=C(\|b\|_{C(\RR^n;\RR^n)}, \beta, \|c\|_{C(\RR^n)}, c_0, \eps)$ is a positive constant. The Interpolation Inequalities \cite[Theorems 8.8.1]{Krylov_LecturesHolder} yield
\begin{align*}
\|u_{k-1}\|_{C^{1+\beta}(\RR^n)} &\leq C\left(\|\gamma_{\eps} (u_{k-1})\|_{C(\RR^n)}+ \|u_{k-1}\|_{C(\RR^n)}\right),\quad\forall k \geq 1,
\end{align*}
which combined with estimates \eqref{eq:Uniform_bound_u_k_eps}, \eqref{eq:Schauder_estimate_u_k_eps}, \eqref{eq:Holder_norm_gamma_eps} and \eqref{eq:Sup_norm_gamma_eps} give us
$$
\|u_k\|_{C^{2s+\alpha}(\RR^n)} \leq C \|\varphi\|_{C^{\alpha}(\RR^n)},\quad\forall k \in \NN,
$$
if $\alpha\leq 1+\beta$. Because $\beta$ can be chosen in the interval $(0,2s-1)$, and $2s>1$, we  see that the preceding estimate holds for all $\alpha\in (0,1)$. Therefore, the sequence of functions $\{u_k\}_{k\geq 0}$ is uniformly bounded in $C^{2s+\alpha}(\RR^n)$, and we can find a subsequence (which we denote the same as the initial sequence, for simplicity) convergent on compact subsets of $\RR^n$, with respect to the $\|\cdot\|_{C^{2s+\beta}(\RR^n)}$ norm, for all $\beta\in(0,\alpha)$, to a function $u_{\eps} \in C^{2s+\alpha}(\RR^n)$. Moreover, we see that
\begin{align*}
L_{\eps} u_k &\rightarrow Lu_{\eps} + \frac{1}{\eps} u_{\eps},\quad\hbox{as } k \rightarrow \infty,\\
\gamma_{\eps} (u_k) &\rightarrow \beta_{\eps}(\varphi-u_{\eps}) + \frac{1}{\eps} u_{\eps},\quad\hbox{as } k \rightarrow \infty,
\end{align*}
from where is follows that $u_{\eps}$ is a solution to the penalized equation \eqref{eq:Penalized_equation}.
\end{proof}

Before we can apply the existence of solutions to the penalized equation \eqref{eq:Penalized_equation} to prove existence of solutions to the obstacle problem \eqref{eq:Obstacle_problem}, we need the following estimates of the penalization term and the penalization sequence. For each $\eps>0$, let $u_{\eps}\in C^{2s+\alpha}(\RR^n)$ be the solution to equation \eqref{eq:Penalized_equation} constructed in Lemma \ref{lem:Existence_penalized_equation}.

\begin{lem}[Estimates of the penalization term and the penalization sequence]
\label{lem:Estimates_penalization_equation}
In addition to the hypotheses of Lemma \ref{lem:Existence_penalized_equation}, assume that the obstacle function $\varphi\in C_0(\RR^n)$ and obeys condition \eqref{eq:Boundedness_L_phi_positive_part}. Then, for all $\eps>0$, the following estimates hold
\begin{align}
\label{eq:Uniform_bound_beta_eps}
&\|\beta_{\eps}(\varphi-u_{\eps})\|_{C(\RR^n)} \leq \|(L\varphi)^+\|_{C(\RR^n)},\\
\label{eq:Uniform_bound_u_eps}
&0\leq u_{\eps} \leq \|\varphi\|_{C(\RR^n)}.
\end{align} 
\end{lem}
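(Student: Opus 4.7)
The plan is to handle the two inequalities separately, treating \eqref{eq:Uniform_bound_u_eps} as essentially a corollary of the construction in Lemma \ref{lem:Existence_penalized_equation}, and \eqref{eq:Uniform_bound_beta_eps} via a maximum-principle argument on the auxiliary function $w:=\varphi-u_\eps$.

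For \eqref{eq:Uniform_bound_u_eps}, I would appeal directly to the iterative scheme used to construct $u_\eps$ in the proof of Lemma \ref{lem:Existence_penalized_equation}. The monotone sequence $\{u_k\}_{k\geq 0}$ produced there satisfies the pointwise bound \eqref{eq:Uniform_bound_u_k_eps}, namely $0\leq u_k\leq \|\varphi\|_{C(\RR^n)}$, and $u_\eps$ is obtained as a locally uniform limit of a subsequence of $\{u_k\}$. Passing to the limit yields the claim at once.

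For \eqref{eq:Uniform_bound_beta_eps}, the plan is to examine $w:=\varphi-u_\eps$, which, by \eqref{eq:Penalized_equation}, satisfies
\[
Lw = L\varphi - \beta_\eps(w)\quad\text{on }\RR^n.
\]
Since $\beta_\eps(t)=t^+/\eps$ is non-decreasing and vanishes on $(-\infty,0]$, the bound \eqref{eq:Uniform_bound_beta_eps} is trivial on the set $\{w\leq 0\}$, so it suffices to treat the case $\sup_{\RR^n}w>0$ and to control $\beta_\eps$ at the point where $w$ attains its supremum. The key observation that makes this strategy work is that $\varphi\in C_0(\RR^n)$ together with the already-proved bound $u_\eps\geq 0$ gives $\limsup_{|x|\to\infty}w(x)\leq 0$; hence whenever $\sup w>0$ the supremum is actually attained at some point $x_0\in\RR^n$. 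At $x_0$ one has $\nabla w(x_0)=0$, and since $w(x_0)\geq w(y)$ for every $y\in\RR^n$, the symmetric representation of the principal value gives $(-\Delta)^s w(x_0)\geq 0$. Combining this with $c(x_0)w(x_0)\geq 0$, which follows from \eqref{eq:Nonnegative_lower_bound_c} and $w(x_0)>0$, one obtains $Lw(x_0)\geq 0$, and therefore
\[
\beta_\eps(w(x_0))\leq L\varphi(x_0)\leq (L\varphi)^+(x_0)\leq \|(L\varphi)^+\|_{C(\RR^n)}.
\]
Monotonicity of $\beta_\eps$ and the pointwise inequality $w\leq w(x_0)$ then propagate this bound to all of $\RR^n$, giving \eqref{eq:Uniform_bound_beta_eps}.

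The main technical point I expect to check carefully is the justification that $(-\Delta)^sw(x_0)$ is pointwise well-defined and non-negative. On the $u_\eps$ side this is automatic from $u_\eps\in C^{2s+\alpha}(\RR^n)$ provided by Lemma \ref{lem:Existence_penalized_equation}; on the $\varphi$ side one uses hypothesis \eqref{eq:Boundedness_L_phi_positive_part}, which, together with $\varphi\in C^{\alpha}(\RR^n)$, guarantees that $L\varphi$ (and hence $(-\Delta)^s\varphi$) makes sense pointwise and is bounded. Once this regularity is in hand, the extremum identities used above are standard, and the rest of the argument is bookkeeping.
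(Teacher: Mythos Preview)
Your proposal is correct and follows essentially the same approach as the paper: both derive \eqref{eq:Uniform_bound_u_eps} directly from the iterative bound \eqref{eq:Uniform_bound_u_k_eps} in the construction of $u_\eps$, and both prove \eqref{eq:Uniform_bound_beta_eps} by locating a global maximum $x_0$ of $\varphi-u_\eps$ (using $\varphi\in C_0(\RR^n)$ and $u_\eps\geq 0$), then applying the first-order condition $\nabla(\varphi-u_\eps)(x_0)=0$, the nonlocal second-order condition $(-\Delta)^s(\varphi-u_\eps)(x_0)\geq 0$, and $c\geq 0$ to conclude $\beta_\eps(\varphi-u_\eps)(x_0)=Lu_\eps(x_0)\leq L\varphi(x_0)\leq (L\varphi)^+(x_0)$. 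Your explicit case split on $\sup w>0$ versus $\sup w\leq 0$ and your remark on the pointwise meaning of $(-\Delta)^s\varphi$ are minor elaborations, but the argument is the same one the paper attributes to \cite[Lemma 1.3.1]{Friedman_1982}.
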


\begin{proof}
Estimate \eqref{eq:Uniform_bound_u_k_eps} gives us \eqref{eq:Uniform_bound_u_eps}. To prove estimate \eqref{eq:Uniform_bound_beta_eps}, we adapt the argument used to prove \cite[Lemma 1.3.1]{Friedman_1982}. Using the fact that  $\varphi\in C_0(\RR^n)$ and that $u_{\eps}$ is a nonnegative function by \eqref{eq:Uniform_bound_u_eps}, we see that the nonlinear term $\beta_{\eps}(\varphi-u_{\eps})$ must attain its global maximum at some point $x_0\in\RR^n$, and $x_0$ is also a global maximum of $\varphi-u_{\eps}$. We obtain that
\begin{align*}
\varphi(x_0)-u_{\eps}(x_0)\geq 0, \quad \nabla \varphi(x_0) - \nabla u_{\eps}(x_0)=0,\quad (-\Delta)^s(\varphi-u_{\eps})(x_0) \geq 0, 
\end{align*}
and using the fact that $c\geq 0$ on $\RR^n$, it follows that $L u_{\eps} (x_0) \leq L\varphi \leq (L\varphi)^+ $. Therefore, $0\leq\beta_{\eps}(\varphi-u_{\eps})(x_0) \leq (L\varphi)^+$, and inequality \eqref{eq:Uniform_bound_beta_eps} follows.
\end{proof}

\subsubsection{Existence of solutions in H\"older spaces to the obstacle problem}
\label{subsec:Existence_solutions_obstacle_problem}
We now use Lemmas \ref{lem:Existence_penalized_equation} and \ref{lem:Estimates_penalization_equation} to prove

\begin{prop}[Existence of solutions in H\"older spaces to the obstacle problem]
\label{prop:Existence_Holder_obstacle_problem}
Let $s\in(1/2,1)$. Let $\alpha\in(0,2s-1)$ be such that $\alpha+2s$ is not an integer. Assume that the obstacle function $\varphi\in C^{\alpha}(\RR^n)\cap C_0(\RR^n)$ is such that condition \eqref{eq:Boundedness_L_phi_positive_part} holds. Assume that the coefficient functions $b \in C^{\alpha}(\RR^n;\RR^n)$, and that $c \in C^{\alpha}(\RR^n)$ satisfies \eqref{eq:Nonnegative_lower_bound_c}. Then there is a solution, $u\in C^{1+\alpha}(\RR^n)$, to the obstacle problem \eqref{eq:Obstacle_problem}, and identity \eqref{eq:Obstacle_problem} holds on $\RR^n$ in the sense of distributions. 
\end{prop}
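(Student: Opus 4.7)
The approach is to take the family of penalized solutions $u_\eps \in C^{2s+\alpha}(\RR^n)$ produced by Lemma \ref{lem:Existence_penalized_equation}, derive $\eps$-independent H\"older estimates on $\{u_\eps\}$, and pass to the limit as $\eps \downarrow 0$. The estimates internal to the proof of Lemma \ref{lem:Existence_penalized_equation} degenerate as $\eps\to 0$, so I would restart from the penalized equation $Lu_\eps = \beta_\eps(\varphi-u_\eps)$ and use instead the $\eps$-uniform bounds
$$\|\beta_\eps(\varphi - u_\eps)\|_{C(\RR^n)} \leq \|(L\varphi)^+\|_{C(\RR^n)},\qquad \|u_\eps\|_{C(\RR^n)}\leq\|\varphi\|_{C(\RR^n)}$$
furnished by Lemma \ref{lem:Estimates_penalization_equation}.

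First I would fix an auxiliary exponent $\beta$ with $\alpha < \beta < 2s-1$, rewrite the penalized equation as
$$(-\Delta)^s u_\eps = \beta_\eps(\varphi-u_\eps) - b\cdot\nabla u_\eps - c\, u_\eps,$$
and invoke \cite[Proposition 2.9]{Silvestre_2007} to get
$\|u_\eps\|_{C^{1+\beta}(\RR^n)} \leq C\bigl(\|(-\Delta)^s u_\eps\|_{C(\RR^n)}+\|u_\eps\|_{C(\RR^n)}\bigr)$.
Bounding $\|(-\Delta)^s u_\eps\|_C$ by the three terms on the right-hand side of the equation, the first is controlled $\eps$-uniformly by Lemma \ref{lem:Estimates_penalization_equation}, the third by $\|\varphi\|_C$, and the drift term $\|b\|_C\|\nabla u_\eps\|_C$ is absorbed into $\|u_\eps\|_{C^{1+\beta}}$ via the interpolation $\|\nabla u_\eps\|_C \leq \delta\|u_\eps\|_{C^{1+\beta}}+C_\delta\|u_\eps\|_C$ (possible because $s>1/2$ places $1$ strictly below $1+\beta$). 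This gives the $\eps$-free bound
$$\|u_\eps\|_{C^{1+\beta}(\RR^n)}\leq C\bigl(\|(L\varphi)^+\|_{C(\RR^n)}+\|\varphi\|_{C(\RR^n)}\bigr).$$

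Next, by Arzel\`a-Ascoli and a diagonal argument on balls $B_R$, extract a subsequence converging in $C^{1+\alpha}_{\loc}(\RR^n)$ to some $u$, which lies in $C^{1+\beta}(\RR^n)\subseteq C^{1+\alpha}(\RR^n)$ by lower semicontinuity of the $C^{1+\beta}$-seminorm. To verify the obstacle conditions, observe first that $\beta_\eps(\varphi-u_\eps)\leq \|(L\varphi)^+\|_C$ forces $(\varphi-u_\eps)^+\leq \eps\|(L\varphi)^+\|_C$, so $u\geq\varphi$ in the limit. For $Lu\geq 0$ distributionally, test against $\psi\in C_c^\infty(\RR^n)$ with $\psi\geq 0$: using self-adjointness of $(-\Delta)^s$, the identity
$$\int_{\RR^n} u_\eps\,(-\Delta)^s\psi\,dx + \int_{\RR^n}(b\cdot\nabla u_\eps+cu_\eps)\psi\,dx = \int_{\RR^n}\beta_\eps(\varphi-u_\eps)\psi\,dx\geq 0$$
passes to the limit by dominated convergence in the nonlocal term (uniform $L^\infty$ bound on $u_\eps$ combined with the decay $|(-\Delta)^s\psi(x)|\lesssim (1+|x|)^{-n-2s}$) and by locally uniform convergence in the remaining terms. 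Finally, on any compact $K\subset\{u>\varphi\}$, continuity and locally uniform convergence give $u_\eps>\varphi$ on $K$ for all small $\eps$, whence $\beta_\eps(\varphi-u_\eps)\equiv 0$ and $Lu_\eps=0$ on $K$; the same passage to the limit yields $Lu=0$ distributionally on $\{u>\varphi\}$. Together, $u\geq\varphi$, $Lu\geq 0$, and $Lu=0$ on $\{u>\varphi\}$ encode \eqref{eq:Obstacle_problem} in the distributional sense.

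The main obstacle is the $\eps$-uniform $C^{1+\beta}$ estimate: one must avoid the blow-up inherent in the constant of Lemma \ref{lem:Existence_penalized_equation} by working directly from the penalized equation and exploiting the subcritical assumption $s>1/2$ to absorb the drift term. A secondary technical point is the distributional interpretation of the complementarity condition, where openness of $\{u>\varphi\}$ (thanks to continuity of $u-\varphi$) is what allows the penalization to vanish uniformly on compact subsets of the non-coincidence set.
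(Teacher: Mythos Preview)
Your proposal is correct. The derivation of the $\eps$-uniform $C^{1+\beta}$ estimate is essentially identical to the paper's: both rewrite the penalized equation as $(-\Delta)^s u_\eps = \beta_\eps(\varphi-u_\eps)-b\cdot\nabla u_\eps - cu_\eps$, invoke \cite[Proposition~2.9]{Silvestre_2007} (which requires the H\"older exponent to lie in $(0,2s-1)$), bound the right-hand side in $C(\RR^n)$ using Lemma~\ref{lem:Estimates_penalization_equation}, and absorb the drift term via interpolation. Your use of an auxiliary $\beta\in(\alpha,2s-1)$ is a minor cosmetic variant of the paper's direct use of $\alpha$.

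Where you diverge is in the passage to the limit. The paper localizes with a cutoff $\eta$, writes $(-\Delta)^s(\eta u_\eps)=\eta(-\Delta)^s u_\eps + f_\eps$ with an explicit commutator $f_\eps$, and proves $(-\Delta)^s(\eta u_\eps)\to(-\Delta)^s(\eta u)$ in $\cS'(\RR^n)$ via the Fourier transform and $L^2$ convergence of the compactly supported $\eta u_\eps$. You instead use self-adjointness, writing $\int u_\eps\,(-\Delta)^s\psi$ for $\psi\in C_c^\infty$ and passing to the limit by dominated convergence using the decay of $(-\Delta)^s\psi$. Your route is more elementary and entirely adequate for this proposition. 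The paper's cutoff machinery, however, is not wasted effort: it produces the localized obstacle problem \eqref{eq:Obstacle_problem_cutoff}, which is immediately reused in the bootstrap argument of Proposition~\ref{prop:Solutions_partial_regularity} (see \eqref{eq:Simplified_obstacle_problem}). So the paper's extra work here buys a reusable structure, while yours gives a cleaner self-contained proof of the proposition as stated.
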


\begin{rmk}
Notice that Lemma \ref{lem:Existence_penalized_equation} establishes the existence of solutions in $C^{2s+\alpha}(\RR^n)$ to the penalized equation \eqref{eq:Penalized_equation}, while Proposition \ref{prop:Existence_Holder_obstacle_problem} shows the existence of solutions in $C^{1+\alpha}(\RR^n)$ to the obstacle problem \eqref{eq:Obstacle_problem}. When $u\in C^{2s+\alpha}(\RR^n)$, by \cite[Proposition 2.6]{Silvestre_2007} it follows that $(-\Delta u)^s\in C^{\alpha}(\RR^n)$, but when $u \in C^{1+\alpha}(\RR^n)$, we make sense of $(-\Delta)^s u$ only in the distributional sense, as shown in the proof of Proposition \ref{prop:Existence_Holder_obstacle_problem}.
\end{rmk}

\begin{proof}[Proof of Proposition \ref{prop:Existence_Holder_obstacle_problem}]
For each $\eps>0$, let $u_{\eps}\in C^{2s+\alpha}(\RR^n)$ be the solution to the penalized equation \eqref{eq:Penalized_equation} constructed in the proof of Lemma \ref{lem:Existence_penalized_equation}. Using the fact that $\alpha\in(0,2s-1)$, it follows by \cite[Proposition 2.9]{Silvestre_2007} that there is a positive constant, $C=C(\alpha,n,s)$, such that
$$
\|u_{\eps}\|_{C^{1+\alpha}(\RR^n)} \leq C\left(\|u_{\eps}\|_{C(\RR^n)}+\|\beta_{\eps}(\varphi-u_{\eps})\|_{C(\RR^n)}+\|b\dotprod\nabla u_{\eps}\|_{C(\RR^n)} + \|cu_{\eps}\|_{C(\RR^n)}\right).
$$
Using the Interpolation Inequalities \cite[Theorems 8.8.1]{Krylov_LecturesHolder} and Lemma \ref{lem:Estimates_penalization_equation}, we obtain that there is a positive constant, $C=C(\alpha, \|b\|_{C^{\alpha}(\RR^n;\RR^n)}, \|c\|_{C^{\alpha}(\RR^n)}, n, s)$, such that
\begin{equation}
\label{eq:u_eps_Holder_estimate}
\|u_{\eps}\|_{C^{1+\alpha}(\RR^n)} \leq C\left(\|\varphi\|_{C(\RR^n)}+\|(L\varphi)^+\|_{C(\RR^n)}\right),\quad\forall\eps>0.
\end{equation}
Therefore, we can find a subsequence, which for simplicity we denote the same as the initial sequence, which converges locally in $C^{1+\beta}(\RR^n)$, for all $\beta\in(0,\alpha)$, to a function $u \in C^{1+\alpha}(\RR^n)$. 

Notice that because $\alpha\in(0,2s-1)$, the fact that $u \in C^{1+\alpha}(\RR^n)$ does not immediately imply that the quantity $(-\Delta)^s u$ is well-defined. We now make sense of $(-\Delta)^s u$ in the sense of distributions. For this purpose, it is enough to show that $\eta(-\Delta)^s u$ is a tempered distribution, where $\eta:\RR^n\rightarrow [0,1]$ is a smooth cut-off function such that
\begin{equation}
\label{eq:Definition_eta}
\eta \equiv 1\quad\hbox{on }B'_1(x_0),\quad\hbox{and}\quad \eta \equiv 0\quad\hbox{on }(B'_2(x_0))^c,
\end{equation}
and the point $x_0\in\RR^n$ is arbitrarily chosen. We define
\begin{equation}
\label{eq:v_eps}
v:=\eta u,\quad\hbox{and}\quad v_{\eps}:=\eta u_{\eps},\quad\forall \eps>0.
\end{equation}
Direct calculations give us that
\begin{align}
\label{eq:Delta_s_v_cutoff}
(-\Delta)^s v_{\eps} = \eta (-\Delta)^s u_{\eps} + f_{\eps},
\end{align}
where the function $f_{\eps}$ is defined by
\begin{equation}
\label{eq:f_eps}
f_{\eps}(x):=u_{\eps}(x)(-\Delta)^s \eta(x)+\int_{\RR^n}\frac{(\eta(y)-\eta(x))(u_{\eps}(y)-u_{\eps}(x))}{|x-y|^{n+2s}}\ dy,\quad\forall x\in\RR^n.
\end{equation}
Multiplying equation \eqref{eq:Penalized_equation} by $\eta$, we obtain
\begin{align}
\label{eq:Penalized_equation_cutoff}
(-\Delta)^s v_{\eps} -f_{\eps} +\eta(b\dotprod\nabla u_{\eps}+c u_{\eps}) = \eta\beta_{\eps}(\varphi-u_{\eps})\quad\hbox{on }\RR^n.
\end{align}
From definition \eqref{eq:v_eps} of $v_{\eps}$, using the fact that the sequence $\{u_{\eps}\}$ converges locally in $C^{1+\beta}(\RR^n)$ to $u \in C^{1+\alpha}(\RR^n)$, for all $\beta \in (0,\alpha)$, we have the pointwise convergence on $\RR^n$,
\begin{equation}
\label{eq:f_convergence}
-f_{\eps} +\eta(b\dotprod\nabla u_{\eps}+c u_{\eps}) \rightarrow -f + \eta(b\dotprod\nabla u+c u),\quad\hbox{as }\eps\rightarrow 0,
\end{equation}
where $f$ is defined by the same formula as $f_{\eps}$ in \eqref{eq:f_eps}, but with $u_{\eps}$ replaced by $u$. From the definition \eqref{eq:v_eps} of the function $v_{\eps}$, using estimate \eqref{eq:u_eps_Holder_estimate}, and the fact that $v_{\eps}$ and $v$ have compact support contained in $B'_2(x_0)$, we have that
\begin{equation}
\label{eq:v_eps_L_2_convergence}
v_{\eps}\rightarrow v\quad\hbox{in } L^2(\RR^n),\quad\hbox{as } \eps\rightarrow 0.
\end{equation}
We now show that the convergence in  \eqref{eq:v_eps_L_2_convergence} implies the convergence in the sense of distributions,
\begin{equation}
\label{eq:Delta_s_v_eps_distributions_convergence}
(-\Delta)^s v_{\eps} \rightarrow (-\Delta)^s v \quad\hbox{in } \cS'(\RR^n),\quad\hbox{as } \eps\rightarrow 0.
\end{equation}
For all $\psi\in\cS(\RR^n)$, we have that
\begin{align*}
\left\langle (-\Delta)^s v_{\eps}, \psi \right\rangle &= \left\langle \widehat{(-\Delta)^s v_{\eps}}, \widehat \psi \right\rangle\\
&=\left\langle  \widehat v_{\eps}, |\xi|^{2s}\widehat \psi \right\rangle\quad\hbox{(using the fact that $\widehat{(-\Delta)^s v_{\eps}}(\xi)=|\xi|^{2s}\widehat v_{\eps}(\xi)$)},
\end{align*}
where $\left\langle\cdot,\cdot\right\rangle$ denotes the duality of $\cS'(\RR^n)$ and $\cS(\RR^n)$. From \eqref{eq:v_eps_L_2_convergence}, we know that $\widehat v_{\eps}$ converges to $\widehat v$ in $L^2(\RR^n)$, as $\eps\rightarrow 0$, and from the fact that $|\xi|^{2s}\widehat \psi\in L^2(\RR^n)$, for all $\psi\in\cS(\RR^n)$, we obtain that
\begin{align*}
\left\langle (-\Delta)^s v_{\eps}, \psi \right\rangle 
&\rightarrow \left\langle  \widehat v, |\xi|^{2s}\widehat \psi \right\rangle\\
&= \left\langle \widehat{(-\Delta)^s v}, \widehat \psi \right\rangle\\
&= \left\langle (-\Delta)^s v, \psi \right\rangle, 
\end{align*}
from which the convergence in \eqref{eq:Delta_s_v_eps_distributions_convergence} follows. Using \eqref{eq:f_convergence} and \eqref{eq:Delta_s_v_eps_distributions_convergence} , we can let $\eps$ tend to $0$ in \eqref{eq:Penalized_equation_cutoff} to obtain that the following hold in distributional sense,
\begin{equation}
\label{eq:Obstacle_problem_cutoff}
\begin{aligned}
\begin{cases}
(-\Delta)^s (\eta u) - f + \eta(b\dotprod\nabla u+c u) =0,&\quad\hbox{if}\quad\eta\varphi<\eta u,\\
(-\Delta)^s (\eta u) - f + \eta(b\dotprod\nabla u+c u) \geq 0,&\quad\hbox{if}\quad\eta\varphi\geq \eta u,
\end{cases}
\end{aligned}
\end{equation}
where we used the fact that $v=\eta u$. Therefore, on the set $\{\eta=1\}\supseteq B'_1(x_0)$, using \eqref{eq:Delta_s_v_cutoff} applied to $v$ instead of $v_{\eps}$, we obtain
\begin{equation*}
\begin{aligned}
\begin{cases}
(-\Delta)^s  u + b\dotprod\nabla u+c u =0,&\quad\hbox{if}\quad \varphi< u,\\
(-\Delta)^s  u + b\dotprod\nabla u+c u \geq 0,&\quad\hbox{if}\quad\varphi\geq u.
\end{cases}
\end{aligned}
\end{equation*}
Because the point $x_0\in\RR^n$ was arbitrarily chosen, we obtain that identity \eqref{eq:Obstacle_problem} holds on $\RR^n$ in the distributional sense. Therefore, the function $u \in C^{1+\alpha}(\RR^n)$ is a solution to the obstacle problem \eqref{eq:Obstacle_problem}.
\end{proof}

Proposition \ref{prop:Existence_Holder_obstacle_problem} is the main ingredient in the proof of Proposition \ref{prop:Solutions_partial_regularity} together with \cite[Theorem 5.8]{Silvestre_2007}.

\begin{proof}[Proof of Proposition \ref{prop:Solutions_partial_regularity}]
From Proposition \ref{prop:Existence_Holder_obstacle_problem}, we may assume without loss of generality that $\alpha \in [2s-1,1)$.
We improve the regularity of solutions from $u\in C^{1+\gamma}(\RR^n)$, for all $\gamma\in (0,2s-1)$, established in Proposition \ref{prop:Existence_Holder_obstacle_problem}, to  $u\in C^{1+\beta}(\RR^n)$, for all $\beta\in (0,\alpha\wedge s)$, by a bootstrapping argument.

Let $\alpha_0\in (0,2s-1)$, and let $u\in C^{1+\alpha_0}(\RR^n)$ be the solution to the obstacle problem \eqref{eq:Obstacle_problem} constructed in the proof of Proposition \ref{prop:Existence_Holder_obstacle_problem}. Then by the complementarity conditions \eqref{eq:Obstacle_problem_cutoff} and definition \eqref{eq:Definition_eta} of the cut-off function $\eta$, the function $\eta u$ is a solution to the obstacle problem,
$$
\min\{(-\Delta)^s (\eta u) - f + \eta(b\dotprod\nabla u+c u), \eta u-\eta\varphi\}=0\quad\hbox{on } \RR^n,
$$
where the function $f$ is given by the same formula as $f_{\eps}$ in \eqref{eq:f_eps}, but with $u_{\eps}$ replaced by $u$. We define the function $g$ by
\begin{equation}
\label{eq:Definition_g} 
g(x):=f(x)-\eta(x)(b(x)\dotprod\nabla u(x)+c(x) u(x)),\quad\forall x\in \RR^n.
\end{equation}
Because the function $u$ belongs to $C^{1+\alpha_0}(\RR^n)$ and $\eta$ has compact support, then the function $g$ decays like $|x|^{-(n+2s)}$, as $|x|\rightarrow\infty$, and Lemma \ref{lem:Regularity_g} yields that $g$ belongs to $C^{\alpha_0\wedge (2(1-s))}(\RR^n)$. We let $w$ be defined by
\begin{equation}
\label{eq:Definition_w_existence}
w(x):=c_{n,s} \int_{\RR^n}\frac{g(y)}{|x-y|^{n-2s}}\ dy,\quad\forall x\in \RR^n.
\end{equation}
The function $w$ is a solution to the linear equation $(-\Delta)^s w = g$ on $\RR^n$. From \cite[Proposition 2.8]{Silvestre_2007} we have that $w$ belongs to $C^{2s+\alpha_0\wedge (2(1-s))}(\RR^n)$. Using the definition \eqref{eq:Definition_w_existence} of $w$, we also have that $w$ decays like $|x|^{-n}$, as $|x|\rightarrow\infty$. Therefore, we obtain that $\eta u-w$ is a continuous solution to the obstacle problem,
\begin{equation}
\label{eq:Simplified_obstacle_problem}
\begin{aligned}
(-\Delta)^s  (\eta u -w) \geq 0&\quad\hbox{on }\RR^n,\\
(-\Delta)^s (\eta u -w) = 0&\quad\hbox{on } \{u>\varphi\} \cap \{\eta>0\} = \{\eta u-w>\eta\varphi-w\},\\
\eta u-w\geq \eta\varphi-w &\quad\hbox{on }\RR^n,\\
\lim_{|x|\rightarrow\infty} \eta(x)u(x)-w(x)=0.
\end{aligned}
\end{equation}
Because $w\in C^{2s+\alpha_0\wedge (2(1-s))}(\RR^n)$ and $\varphi \in C^{1+\alpha}(\RR^n)$, the obstacle function $\eta \varphi-w$ belongs to $C^{1+\gamma_1}(\RR^n)$, where 
\begin{align*}
1+\gamma_1&=(2s+\alpha_0 \wedge (2(1-s)))\wedge (1+\alpha)\\
&=(2s+\alpha_0)\wedge (1+\alpha).
\end{align*}
The second equality follows from the fact that $\alpha\in (0,1)$ and $2s+2(1-s)=2>1+\alpha$. By \cite[Theorem 5.8]{Silvestre_2007}, we have that $\eta u-w\in C^{1+\beta}(\RR^n)$, for all $\beta<\alpha_1$, where $1+\alpha_1:=(2s+\alpha_0)\wedge (1+\alpha)\wedge (1+s)$. Because the center of the ball $B'_1(x_0)$ in the definition \eqref{eq:Definition_eta} of the cut-off function $\eta$ can be chosen arbitrarily in $\RR^n$, we obtain that $u \in C^{1+\beta}(\RR^n)$, for all $\beta<\alpha_1$. If $2s+\alpha_0 \geq (1+\alpha)\wedge (1+s)$, we obtain that the solution $u$ belongs to $C^{1+\beta}$, for all $\beta\in(0,\alpha\wedge s)$. Otherwise, we repeat the preceding steps, but now we notice that the H\"older exponent $\alpha_0$ can be replaced by $\alpha_0+(2s-1)$, where we recall that the increment $2s-1$ is positive, since we assume that $s\in (1/2,1)$. The fact that $w\in C^{2s+\alpha_0+(2s-1)}(\RR^n)$ gives us that $\eta \varphi-w$ belongs to $C^{1+\gamma_2}(\RR^n)$, where 
$$
1+\gamma_2:=(2s+\alpha_0+(2s-1))\wedge (1+\alpha).
$$
By \cite[Theorem 5.8]{Silvestre_2007}, it follows that $\eta u-w\in C^{1+\beta}(\RR^n)$, for all $\beta<\alpha_2$, where $1+\alpha_2:=(2s+\alpha_0+(2s-1))\wedge (1+\alpha)\wedge (1+s)$, and so, the function $u$ belongs to $C^{1+\beta}(\RR^n)$, for all $\beta<\alpha_2$. We repeat this procedure $k$ times where we choose $k$ such that
\begin{align*}
1+\alpha_k&:=(2s+\alpha_0+(k-1)(2s-1))\wedge (1+\alpha)\wedge (1+s)\\
& = (1+\alpha)\wedge (1+s),
\end{align*}
and the conclusion that the function $u$ belongs to $C^{1+\beta}(\RR^n)$, for all $\beta\in (0,\alpha\wedge s)$, now follows. 
\end{proof}

\subsection{Uniqueness of solutions}
\label{sec:Uniqueness}
We use a probabilistic method to prove uniqueness of solutions to the obstacle problem \eqref{eq:Obstacle_problem} by establishing their stochastic representation. Let $(\Omega, \cF, \PP)$ be a filtered probability space endowed with a filtration, $\{\cF(t)\}_{t\geq 0}$, which satisfies the usual hypotheses of completeness and right-continuity \cite[p. 72]{Applebaum}. Let $N(dt,dy)$ be a Poisson random measure on $[0,\infty)\times(\RR^n\backslash\{O\})$ with L\'evy measure, 
\begin{equation}
\label{eq:Levy_measure}
\nu(dy)=\frac{c_{n,s}}{|y|^{n+2s}}\, dy,
\end{equation}
and let $\widetilde N(dt, dy)=N(dt, dy) - \nu(dy)dt$ denote its compensator. We recall the results on existence and uniqueness of solutions to the stochastic equation,
\begin{equation}
\label{eq:SDE}
dX(t)=-b(X(t))\ dt+\int_{\RR^n\backslash\{O\}} y \widetilde N(dt,dy),\quad t>0,\ X_0=x \in\RR^n.
\end{equation}
If the vector field $b:\RR^n\rightarrow\RR^n$ is a bounded, Lipschitz continuous function, then it follows by \cite[Theorem 6.2.9]{Applebaum}, that there is a unique RCLL (right-continuous with left limit) adapted solution to equation \eqref{eq:SDE}.

\begin{prop}[Uniqueness of solutions]
\label{prop:Uniqueness}
Let $s\in (0,1)$ and $\alpha\in((2s-1)\vee 0,1)$. Assume that the obstacle function $\varphi \in C(\RR^n)$, and that the coefficient function $b\in C(\RR^n;\RR^n)$ is Lipschitz continuous on $\RR^n$, and that the coefficient function $c$ is a Borel measurable function which satisfies condition \eqref{eq:Lower_bound_c}. If $u \in C^{1+\alpha}(\RR^n)$ is a solution to the obstacle problem \eqref{eq:Obstacle_problem}, then $u$ has the stochastic representation
\begin{equation}
\label{eq:Stochastic_representation}
u(x)=\sup_{\tau\in\cT} \EE^x\left[e^{-\int_0^{\tau} c(X(s))\, ds} \varphi(X(\tau))\right],\quad\forall x\in\RR^n,
\end{equation}
where $\cT$ is the set of stopping times with respect to the filtration $\{\cF(t)\}_{t\geq 0}$, and $\{X(t)\}_{t\geq 0}$ is the unique solution to the stochastic differential equation \eqref{eq:SDE}, with initial condition $X(0)=x$.
\end{prop}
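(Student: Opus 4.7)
The plan is to apply a generalized It\^o-Dynkin formula to the process $M(t) := \exp\bigl(-\int_0^t c(X(s))\,ds\bigr) u(X(t))$ and exploit the two obstacle inequalities $Lu \geq 0$ and $u \geq \varphi$ to produce the supremum formula \eqref{eq:Stochastic_representation}.

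First I would verify that $(-\Delta)^s u(x)$ is well-defined pointwise as a continuous function on $\RR^n$. The regularity hypothesis $u \in C^{1+\alpha}(\RR^n)$ with $\alpha > 2s-1$ is precisely what is needed: writing the integrand as $u(x+y) - u(x) - y\cdot\nabla u(x)\mathbf{1}_{\{|y|\leq 1\}}$, we see that it is $O(|y|^{1+\alpha})$ near the origin and bounded at infinity (since $u$ is bounded in view of \eqref{eq:Stochastic_representation} and the assumption $u \in C^{1+\alpha}(\RR^n)$), so the principal value integral converges. In particular $Lu$ is a Borel function, and the distributional identity \eqref{eq:Obstacle_problem} holds pointwise almost everywhere.

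Next I would apply It\^o's formula for semimartingales with jumps to $M(t)$. Since $u$ is not $C^2$, I would approximate $u$ by smooth mollifications $u_\eps = u * \rho_\eps \in C^\infty_b(\RR^n)$, apply the classical It\^o formula (e.g.\ \cite[Theorem 4.4.7]{Applebaum}) to obtain
\begin{equation*}
M_\eps(t) = u_\eps(x) - \int_0^t e^{-\int_0^s c(X(r))\,dr}\, (L u_\eps)(X(s))\,ds + N_\eps(t),
\end{equation*}
where $N_\eps(t)$ is the compensated Poisson integral local martingale, and then pass to the limit $\eps \downarrow 0$. The key input for the passage to the limit is that $u_\eps \to u$ in $C^1_{\mathrm{loc}}$ and, by dominated convergence based on the $C^{1+\alpha}$ control of $u_\eps$, $L u_\eps \to Lu$ locally uniformly. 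The main technical obstacle is this approximation step: one must obtain uniform $L^1(\nu(dy)\,ds)$ bounds on the jump integrand so that the stochastic integrals $N_\eps$ converge in $L^2$ to the corresponding integral against $\widetilde N$; the $\alpha > 2s-1$ hypothesis is exactly what makes the relevant moments finite.

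Having established the identity $M(t) = u(x) - \int_0^t e^{-\int_0^s c\,dr}(Lu)(X(s))\,ds + N(t)$ with $N$ a local martingale, I would complete the proof in two steps. For the $\geq$ direction, since $Lu \geq 0$ on $\RR^n$, the process $M$ is a local supermartingale; because $u$ is bounded (by e.g.\ $\|\varphi\|_{C(\RR^n)}$ via the obstacle structure, or at least exponentially controlled through the $e^{-c_0 t}$ factor from \eqref{eq:Lower_bound_c}), it is a true supermartingale, and optional sampling combined with $u \geq \varphi$ yields
\begin{equation*}
u(x) \geq \EE^x[M(\tau)] \geq \EE^x\Bigl[e^{-\int_0^\tau c(X(s))\,ds}\varphi(X(\tau))\Bigr] \qquad \text{for every } \tau \in \cT.
\end{equation*}
For the reverse inequality, I introduce the hitting time $\tau^* := \inf\{t \geq 0 : u(X(t)) = \varphi(X(t))\}$ of the coincidence set. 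On $[0,\tau^*)$ the process $X$ stays in the open set $\{u > \varphi\}$, where the obstacle condition gives $Lu = 0$, so the stopped process $M^{\tau^*}$ is a (true) martingale. Optional sampling then gives $u(x) = \EE^x[M(\tau^*)]$; and since $u \geq \varphi$ always holds with equality at $X(\tau^*)$ on $\{\tau^* < \infty\}$, while the factor $e^{-c_0 t} \to 0$ kills the contribution on $\{\tau^* = \infty\}$, we obtain $u(x) = \EE^x[e^{-\int_0^{\tau^*} c(X(s))\,ds}\varphi(X(\tau^*))]$, so $\tau^*$ realizes the supremum. Uniqueness of $u$ in $C^{1+\alpha}(\RR^n)$ follows immediately from \eqref{eq:Stochastic_representation} since the right-hand side depends only on the data $b$, $c$, $\varphi$.
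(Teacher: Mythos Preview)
Your approach is essentially the same as the paper's: apply It\^o's formula to obtain the Dynkin-type identity for $e^{-\int_0^t c(X(s))\,ds}u(X(t))$, use $Lu\geq 0$ for the supermartingale inequality, and use $Lu=0$ on the continuation region together with the hitting time $\tau^*$ to realize the supremum. The only methodological difference is that the paper invokes It\^o's lemma directly for $u\in C^{1+\alpha}(\RR^n)$ (citing \cite[Theorem~4.4.7]{Applebaum}, with $\alpha>2s-1$ guaranteeing finiteness of the compensator integral), rather than passing through a mollification; your route is arguably more careful on this point. One detail the paper makes explicit that you should also record: since $u\in C^{1+\alpha}$ with $\alpha>2s-1$, \cite[Proposition~2.6]{Silvestre_2007} gives that $Lu$ is continuous, hence $Lu=0$ extends from $\{u>\varphi\}$ to its closure---this matters because the Dynkin integrand involves $Lu(X(s-))$, and for a jump process $X(s-)$ may land on $\partial\{u>\varphi\}$ even for $s\leq\tau^*$.
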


\begin{proof} 
Because we assume that $\alpha>2s-1$, we may apply It\^o's lemma \cite[Theorem 4.4.7]{Applebaum} to the function $u \in C^{1+\alpha}(\RR^n)$ and the unique solution, $\{X(t)\}_{t\geq 0}$, to equation \eqref{eq:SDE}, with initial condition $X(0)=x$. We obtain
\begin{align*}
d\left(e^{-\int_0^t c(X(s))\, ds} u(X(t))\right)
&= e^{-\int_0^t c(X(s))\, ds}\left[\left(-c(X(t-)) - b(X(t-))\dotprod\nabla u(X(t-))\right)\, dt\right.\\
&\quad+ \int_{\RR^n\backslash\{O\}}\left(u(X(t-)+y)-u(X(t-))\right)\widetilde N(dt,dy)\\
&\quad+ \left. \int_{\RR^n\backslash\{O\}}\left(u(X(t-)+y)-u(X(t-))-y\dotprod\nabla u(X(t-))\right) \nu(dy) dt\right].
\end{align*}
The assumptions that the function $u$ belongs to $C^{1+\alpha}(\RR^n)$, and $\alpha>2s-1$, is used to ensure that the last term in the preceding expression is well-defined. Because the function $u$ belongs to $C^1(\RR^n)$, and $s>1/2$, we see from definition \eqref{eq:Levy_measure} of the L\'evy measure, $\nu(dy)$, that there is a positive constant, $C$, such that
$$
\int_{\RR^n\backslash\{O\}} \left|u(X(t-)+y)-u(X(t-))\right|^2 \nu(dy) \leq C,\quad\forall t \geq 0,
$$
and so, it follows by the Martingale Representation Theorem \cite[Theorem 5.3.5]{Applebaum} that the process
$$
M(t):=\int_0^t\int_{\RR^n\backslash\{O\}}\left(u(X(s-)+y)-u(X(s-))\right)\widetilde N(ds,dy),\quad t\geq 0,
$$
is a martingale. We then can write
$$
e^{-\int_0^t c(X(s))\, ds} u(X(t))= u(x)- \int_0^t e^{-\int_0^s c(X(r))\, dr} Lu(X(s-))\, ds +M(t),\quad\forall t \geq 0.
$$
As usual, we define the stopping time $\tau^*$ by
$$
\tau^*:=\inf\{t \geq 0:\ u(X(t))=\varphi(X(t))\}.
$$
On the set $\{u>\varphi\}$, we have that $Lu=0$. Moreover, since we assume that the solution $u$ belongs to the H\"older space $C^{1+\alpha}$, for some $\alpha>2s-1$, it follows by \cite[Proposition 2.6]{Silvestre_2007} that the function $Lu$ is continuous on $\RR^n$, and so, we have that $Lu(x)=0$, for all $x\in\{u>\varphi\}\cup\partial\{u>\varphi\}$. We then obtain that the stopped process
$$
\left\{e^{-\int_0^{t\wedge\tau^*} c(X(s))\, ds} u(X(t\wedge\tau^*))\right\}_{t\geq 0}
$$
is a martingale, which gives us that
$$
u(x)=\EE^x\left[e^{-\int_0^{\tau^*} c(X(s))\, ds} \varphi(X(\tau^*))\right],\quad\forall x\in\RR^n,
$$
where $\EE^x$ denotes expectation with respect to the law of the unique solution, $\{X(t)\}_{t\geq 0}$, to the equation \eqref{eq:SDE}, with initial condition $X(0)=x$. The condition that the coefficient function $c$ satisfies inequality \eqref{eq:Lower_bound_c}, is used to ensure that the integrand in the preceding expression is well-defined when $\tau^*=\infty$. Because we assume that the obstacle function $\varphi$ is bounded, when $\tau^*=\infty$, the integrand in the preceding expression is zero. Because $Lu \geq 0$ on $\RR^n$, in general, we have that the process
$$
\left\{e^{-\int_0^{t} c(X(s))\, ds} u(X(t))\right\}_{t\geq 0}
$$
is a supermartingale. Together with the fact that $u\geq \varphi$ on $\RR^n$, this implies that, for all $\tau\in\cT$, we have that
$$
u(x)\geq \EE^x\left[e^{-\int_0^{\tau} c(X(s))\, ds} \varphi(X(\tau))\right].
$$
Thus we obtain the stochastic representation \eqref{eq:Stochastic_representation} of solutions $u$ to the obstacle problem \eqref{eq:Obstacle_problem}, which in particular implies that the solution is unique.
\end{proof}

\subsection{Monotonicity formula}
\label{sec:Monotonicity_formula}
In this section we prove a new Almgren-type monotonicity formula suitable for solutions to the obstacle problem defined by the fractional Laplacian with drift, \eqref{eq:Obstacle_problem}. We use the monotonicity formula to establish the optimal regularity of solutions in \S \ref{sec:Solutions_optimal_regularity}. 

We assume that the hypotheses of Proposition \ref{prop:Solutions_partial_regularity} hold, and in addition that the obstacle function $\varphi$ belongs to $C^{2s+\alpha}(\RR^n)$, for all $\alpha\in (0,s)$. Proposition \ref{prop:Solutions_partial_regularity} gives us the existence of a solution $u\in C^{1+\alpha}(\RR^n)$, for all $\alpha\in (0,s)$, to the obstacle problem \eqref{eq:Obstacle_problem} which solves the ``localized" obstacle problem \eqref{eq:Simplified_obstacle_problem}. We recall that the function $w$ defined in \eqref{eq:Definition_w_existence} and appearing in \eqref{eq:Simplified_obstacle_problem}, belongs to the space $C^{2s+\alpha}(\RR^n)$, and so, the function $\eta \varphi-w$ is contained in $C^{2s+\alpha}(\RR^n)$, for all $\alpha \in (0,s)$, since we assume that $\varphi \in C^{2s+\alpha}(\RR^n)$, for all $\alpha \in (0,s)$. Therefore, using \eqref{eq:Simplified_obstacle_problem} we reduce the study of the regularity of solutions to the obstacle problem \eqref{eq:Obstacle_problem} to that of solutions to the problem,
\begin{equation}
\label{eq:Obstacle_problem_simple}
\begin{aligned}
(-\Delta)^s  u \geq 0&\quad\hbox{on }\RR^n,\\
(-\Delta)^s u = 0&\quad\hbox{on } \{u>\varphi\},\\
 u\geq \varphi &\quad\hbox{on }\RR^n,
\end{aligned}
\end{equation}
where we now let $u$ replace $\eta u-w$, and $\varphi$ replace $\eta\varphi-w$ in \eqref{eq:Simplified_obstacle_problem}. Thus, the natural starting assumption in proving the optimal regularity of solutions to the obstacle problem \eqref{eq:Obstacle_problem}, is that $\varphi\in C^{2s+\alpha}(\RR^n)$, for all $\alpha\in (0,s)$, and $u \in C^{1+\alpha}(\RR^n)$, for all $\alpha\in (0,s)$,  is a solution to problem \eqref{eq:Obstacle_problem_simple}. We recall that the regularity of these solutions is studied in \cite{Caffarelli_Salsa_Silvestre_2008} under the assumption that $\varphi \in C^{2,1}(\RR^n)$. In our case, in general we can only assume that $\varphi\in C^{2s+\alpha}(\RR^n)$, for all $\alpha\in (0,s)$, due to the presence of the lower order terms in the expression of the operator $L$. 

Let $a:=1-2s$. We consider the operator $L_a$ defined, for all $v\in C^2(\RR^n\times\RR_+)$, by
\begin{equation}
\label{eq:L_a}
L_a v(x,y) = \hbox{div }(|y|^{a}\nabla v)(x,y),\quad\forall (x,y)\in \RR^n\times\RR_+.
\end{equation}
The relation between the degenerate-elliptic operator $L_a$ and the fractional Laplacian operator, $(-\Delta)^s$, is investigated in \cite[\S 3]{Caffarelli_Silvestre_2007}, where it is established that $L_a$-harmonic functions, $u$, satisfy
\begin{equation}
\label{eq:Dirichlet_to_Neumann_map}
\lim_{y\downarrow 0} y^a u_y(x,y) = -(-\Delta)^s u(x,0),
\end{equation}
that is, the fractional Laplacian operator, $(-\Delta)^s$, is a Dirichlet-to-Neumann map for the elliptic operator $L_a$. Identity \eqref{eq:Dirichlet_to_Neumann_map} holds up to multiplication by a constant factor (see \cite[Formula (3.1)]{Caffarelli_Silvestre_2007}).

We construct the $L_a$-harmonic extensions of the functions $u(x)$ and $\varphi(x)$ from $\RR^n$ to the half-space $\RR^n\times\RR_+$ (see \cite[\S 2.4]{Caffarelli_Silvestre_2007}). For simplicity, we keep the same notation for the extensions as for the initial functions, even if the domains changed. That is, we denote the extensions of the functions $u(x)$  and $\varphi(x)$, defined for all $x\in \RR^n$, by $u(x,y)$ and $\varphi(x,y)$, defined for all points $(x,y)\in\RR^n\times\RR_+$, respectively. We assume without loss of generality that $O$ is a point on $\partial\{u=\varphi\}$. Given the fact that $\varphi \in C^{2s+\alpha}(\RR^n)$, for all $\alpha \in (0,s)$, we may consider the following auxiliary ``height" function,
\begin{equation}
\label{eq:Auxiliary_function_v}
v(x,y) := u(x,y) - \varphi(x,y) + \frac{1}{2s}(-\Delta)^s\varphi(O)|y|^{1-a},\quad\forall (x,y)\in\RR^n\times\bar\RR_+,
\end{equation}
and we extend $v$ to the whole space $\RR^{n+1}$ by even reflection, i.e. we let $v(x,y)=v(x,-y)$, for all $(x,y)\in\RR^n\times\RR_+$. Compare the definition of the function $v$ with that of $\widetilde u$ in \cite[p. 433]{Caffarelli_Silvestre_2007}, where the condition that the obstacle $\varphi$ belongs to $C^2(\RR^n)$ is required. 

Because we know that the function $u \in C^{1+\alpha}(\RR^n)$ and $\varphi \in C^{2s+\alpha}(\RR^n)$, and $O\in\partial\{u=\varphi\}$, we can find a positive constant, $C$, such that
\begin{equation}
\label{eq:Ineq_v_on_R_n}
0\leq v(x) \leq C|x|^{1+\alpha},\quad\forall x\in\RR^n.
\end{equation}
 In addition, the function $v$ satisfies the following properties
\begin{align}
\label{eq:Properties_v_1}
L_a v =0 &\quad\hbox{on }\RR^n\times(\RR\backslash\{0\}),\\
\label{eq:Properties_v_2}
v \geq 0&\quad\hbox{on }\RR^n\times\{0\}.
\end{align}
The integration by parts formula gives us that
\begin{align*}
L_a v(x,y) &= 2 \lim_{z\downarrow 0} |z|^a v_z(x,z) \cH^n|_{\{y=0\}},
\end{align*}
where $\cH^n|_{\{y=0\}}$ denotes the Hausdorff measure on the hyperplane $\{y=0\}$. Using now identities \eqref{eq:Dirichlet_to_Neumann_map} and \eqref{eq:Auxiliary_function_v}, we obtain
\begin{align}
\label{eq:Equation_v}
L_a v(x,y) &= 2\left(-(-\Delta)^s u(x) +(-\Delta)^s \varphi(x)-(-\Delta)^s \varphi(O)\right)\cH^n|_{\{y=0\}}.
\end{align}
Because the function $u$ solves problem \eqref{eq:Obstacle_problem_simple}, we see that
\begin{align}
\label{eq:Upper_bound_L_a}
L_a v(x,y) &\leq 2\left((-\Delta)^s \varphi(x)-(-\Delta)^s \varphi(O)\right)\cH^n|_{\{y=0\}}\quad\hbox{on } \RR^{n+1},\\
\label{eq:Equality_L_a}
L_a v(x,y) &= 2\left((-\Delta)^s \varphi(x)-(-\Delta)^s \varphi(O)\right)\cH^n|_{\{y=0\}}\quad\hbox{on } \RR^{n+1}\backslash(\{y=0\}\cap\{u=\varphi\}).
\end{align}
Notice that $L_a v$ is a singular measure supported on $\{y=0\}$. Compared to $L_a \widetilde u$, where the function $\widetilde u$ is the analogue of $v$ in \cite[p. 433]{Caffarelli_Silvestre_2007}, the singular measure $L_a v$ has nontrivial support on $\{y=0\}\backslash\{v=0\}$, while the measure $L_a\widetilde u$ is a classical function on $\{y=0\}\backslash\{\widetilde u=0\}$. This is due to the presence of the drift component in the definition of the operator $L$. This difference is one of the key points which makes the analysis of the obstacle problem for the fractional Laplacian with drift \eqref{eq:Obstacle_problem} different that the one of the obstacle problem without drift studied in \cite{Caffarelli_Salsa_Silvestre_2008}.

We denote the right-hand side in inequalities \eqref{eq:Upper_bound_L_a} and \eqref{eq:Equality_L_a} by
\begin{equation}
\label{eq:Definition_h}
h(x):=2\left((-\Delta)^s \varphi(x)-(-\Delta)^s \varphi(O)\right),\quad\forall x\in \RR^n.
\end{equation}
Because $\varphi\in C^{2s+\alpha}(\RR^n)$, we see that $h \in C^{\alpha}(\RR^n)$ by \cite[Proposition 2.6]{Silvestre_2007}, and so we have that
\begin{equation}
\label{eq:Growth_h}
|h(x)| \leq C|x|^{\alpha},\quad \forall x\in \RR^n,
\end{equation}
where $C:=2[(-\Delta)^s \varphi]_{C^{\alpha}(\RR^n)}$, for all $\alpha\in (0,s)$.

Let $U\subseteq\RR^{n+1}$ be a Borel measurable set. We say that a function $w$ belongs to the weighted Sobolev space $H^1(U,|y|^a)$, if $w$ and $Dw$ are function in $L^2_{\loc}(U)$ and
$$
\int_{U}\left(|w|^2+|\nabla w|^2\right)|y|^a <\infty.
$$
From \cite[\S 2.4]{Caffarelli_Silvestre_2007}, it follows that the auxiliary function $v$ belongs to the spaces $C(\RR^{n+1})$ and $ H^1(B_r,|y|^a)$, for all $r>0$. In particular, the following quantities are well-defined:
\begin{align}
\label{eq:F}
F_v(r)&:=\int_{\partial B_r} |v|^2 |y|^a,\\
\label{eq:Phi}
\Phi^p_v(r) &:= r\frac{d}{dr} \log\max\{F_v(r), r^{n+a+2(1+p)}\},
\end{align}
where $r>0$ and $p>0$. The function $F_v(r)$ and $\Phi^p_v(r)$ are the analogues of the functions $F_u(r)$ and $\Phi_u(r)$ given by \cite[Definitions (3.1) and (3.2)]{Caffarelli_Salsa_Silvestre_2008}, but adapted to our framework. 

The main result of this section is the following analogue of \cite[Theorem 3.1]{Caffarelli_Salsa_Silvestre_2008}.

\begin{prop}[Monotonicity formula]
\label{prop:Monotonicity_formula}
Let $s\in (1/2,1)$, $\varphi\in C^{2s+\alpha}(\RR^n)$ and $u\in C^{1+\alpha}(\RR^n)$, for all $\alpha\in (0,s)$, such that $2s+\alpha$ is not an integer. Assume that the function $u$ is a solution to the obstacle problem \eqref{eq:Obstacle_problem_simple}. Then, for all $\alpha\in (2s-1,s)$ and $p\in [s,\alpha+s-1/2)$, there are positive constants, $C$ and $r_0\in (0,1)$, such that the function
\begin{equation}
\label{eq:Monotonicity_formula}
(0,r_0)\ni r\mapsto e^{Cr^{\gamma}} \Phi^p_v(r),
\end{equation}
is non-decreasing, where $\gamma:=2(\alpha+s-p)-1$, and $v$ is defined by identity \eqref{eq:Auxiliary_function_v}.
\end{prop}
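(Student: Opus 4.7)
\emph{Proof plan.} The argument adapts the Almgren-type frequency monotonicity of \cite[Theorem 3.1]{Caffarelli_Salsa_Silvestre_2008} to our setting, in which the auxiliary height function $v$ satisfies the obstacle-type inequality \eqref{eq:Upper_bound_L_a}--\eqref{eq:Equality_L_a} with a source $h\,\cH^n|_{\{y=0\}}$ supported on the full hyperplane $\{y=0\}$, and in which $h$ only enjoys the H\"older growth \eqref{eq:Growth_h}. First I would compute, by scaling,
$$F_v'(r)=\frac{n+a}{r}F_v(r)+2\int_{\partial B_r}v\,v_\nu\,|y|^a,$$
and then apply the identity $\divg(v|y|^a\nabla v)=|\nabla v|^2\,|y|^a+v\,L_av$ on $B_r$. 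The crucial observation is that on the coincidence set $\{y=0\}\cap\{u=\varphi\}$ the trace $v(x,0)=u(x)-\varphi(x)$ vanishes identically, so the pairing $\int_{B_r} v\,dL_av$ is unaffected by the strict inequality in \eqref{eq:Upper_bound_L_a}; exactly the source recorded in \eqref{eq:Equality_L_a} contributes. This yields the clean identity
$$\int_{\partial B_r}v\,v_\nu\,|y|^a\;=\;D_v(r)+\int_{B'_r}v(x,0)\,h(x)\,dx,\qquad D_v(r):=\int_{B_r}|\nabla v|^2\,|y|^a.$$

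Second, I would derive a Pohozaev/Rellich-type identity by multiplying $L_av$ by the radial vector field $(x,y)\cdot\nabla v$ and integrating over $B_r$, using the even reflection of $v$ across $\{y=0\}$ to handle the singular measure. This produces
$$D_v'(r)=\frac{n-1+a}{r}D_v(r)+2\int_{\partial B_r}v_\nu^2\,|y|^a+E(r),$$
with a boundary/measure correction of the form $E(r)=\tfrac{2}{r}\int_{B'_r}(x\cdot\nabla_{x'}v(x,0))\,h(x)\,dx$. Combining \eqref{eq:Ineq_v_on_R_n} with \eqref{eq:Growth_h} yields the pointwise bounds
$$\Bigl|\int_{B'_r}v(x,0)\,h(x)\,dx\Bigr|\le C\,r^{n+1+2\alpha},\qquad |E(r)|\le C\,r^{n+1+2\alpha},$$
for every sufficiently small $r>0$.

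Third, I introduce the Almgren frequency $N(r):=rD_v(r)/F_v(r)$, so that $rF_v'/F_v=(n+a)+2N(r)+O\bigl(r^{n+2+2\alpha}/F_v(r)\bigr)$. Computing $(\log N)'(r)=\tfrac{1}{r}+D_v'/D_v-F_v'/F_v$ and invoking Cauchy--Schwarz,
$$\Bigl(\int_{\partial B_r}v\,v_\nu\,|y|^a\Bigr)^2\le F_v(r)\cdot\int_{\partial B_r}v_\nu^2\,|y|^a,$$
produces, on the set $\{F_v(r)\ge r^{n+a+2(1+p)}\}$, the differential inequality $(\log N)'(r)\ge -C\,r^{\gamma-1}$. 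The exponent $\gamma$ arises by dividing the error $r^{n+1+2\alpha}$ by the lower bound $F_v(r)\ge r^{n+a+2(1+p)}=r^{n-2s+3+2p}$: this gives $\gamma=2(\alpha+s-p)-1$, which is positive precisely because $p<\alpha+s-\tfrac{1}{2}$. Equivalently, $e^{Cr^\gamma/\gamma}N(r)$ is non-decreasing on that subset.

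Finally, I handle the truncation built into \eqref{eq:Phi}. On open intervals where $F_v(r)>r^{n+a+2(1+p)}$, the quantity $\Phi_v^p(r)$ coincides with $(n+a)+2N(r)+O(r^\gamma)$ and the previous step delivers the monotonicity of $e^{Cr^\gamma}\Phi_v^p(r)$; on the complementary open set $\Phi_v^p(r)\equiv n+a+2(1+p)$ is constant; and at a transition radius $r_0$ with $F_v(r_0)=r_0^{n+a+2(1+p)}$, the definition of $\Phi_v^p$ as the logarithmic derivative of a $\max$ forces $\Phi_v^p$ to jump only upward (the assumption $p\ge s$ ensures that the threshold $r^{n+a+2(1+p)}$ is at least as large as the natural growth rate $r^{n+a+2(1+s)}$ of $F_v$ at the free boundary). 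The main obstacle I anticipate is the rigorous Pohozaev identity of the second step: integration by parts against the singular measure $L_av$ on $\{y=0\}$ must be justified by the even reflection of $v$ and the vanishing of $v(x,0)$ on the coincidence set, isolating precisely the manageable error $E(r)$. This is the technical heart of the proof, which is presumably why the formal calculation is deferred to \S \ref{sec:Auxiliary_results_proofs}.
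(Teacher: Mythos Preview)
Your proof plan follows essentially the same strategy as the paper's: differentiate the Almgren frequency, use the Rellich--Pohozaev identity (Lemma~\ref{lem:Gradient_estimate}) and Cauchy--Schwarz to isolate a nonnegative main term, and control the error contributions coming from the singular measure $L_av=h\,\cH^n|_{\{y=0\}}$ via the growth bounds \eqref{eq:Ineq_v_on_R_n} and \eqref{eq:Growth_h}. Your observation that $v(\cdot,0)$ and $\nabla_x v(\cdot,0)$ vanish on the coincidence set, so that only the source $h$ from \eqref{eq:Equality_L_a} contributes to the pairings, is exactly the mechanism the paper exploits.

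There is, however, one genuine missing ingredient. In your third step you write that the exponent $\gamma$ ``arises by dividing the error $r^{n+1+2\alpha}$ by the lower bound $F_v(r)\ge r^{n+a+2(1+p)}$''. But in the expression $(\log N)'=\tfrac{1}{r}+D_v'/D_v-F_v'/F_v$, the Pohozaev error $E(r)$ appears divided by $D_v(r)$, not by $F_v(r)$; the truncation in \eqref{eq:Phi} gives you a lower bound on $F_v$ only. To convert this into a lower bound on $D_v$ you need the weighted Poincar\'e-type inequality (Lemma~\ref{lem:L_2_estimate_v_boundary}), which yields
\[
D_v(r)\;\ge\;\tfrac{1}{Cr}\bigl(F_v(r)-Cr^{n+a+2(2s+\alpha)}\bigr)\;\ge\;C^{-1}r^{\,n+a+2(1+p)-1}
\]
for $r$ small, because $p<\alpha+s-\tfrac{1}{2}<2s+\alpha-1$. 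The paper works with $\Psi^p_v(r)=2r\int_{\partial B_r}vv_\nu|y|^a/F_v(r)$ rather than your $N(r)=rD_v/F_v$, which places $\int_{\partial B_r}vv_\nu|y|^a$ in the denominator of the error term $S(r)$, and then invokes Lemma~\ref{lem:L_2_estimate_v_boundary} to bound this from below by $Cr^{n+a+2(1+p)-1}$. Either way, the Poincar\'e step is essential and cannot be skipped; without it your error bound does not yield $(\log N)'\ge -Cr^{\gamma-1}$. (Incidentally, $|E(r)|\le Cr^{n+2\alpha}$, not $r^{n+1+2\alpha}$ --- you have already absorbed the $1/r$ --- but after dividing by the correct denominator $D_v(r)\gtrsim r^{n+a+2(1+p)-1}$ this gives exactly $r^{\gamma-1}$.)

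A minor point: the condition $p\ge s$ plays no role in the monotonicity argument itself (only $p<\alpha+s-\tfrac{1}{2}$ is needed to make $\gamma>0$); it is used later in Proposition~\ref{prop:Phi_at_0}. Your explanation that $p\ge s$ governs the transition behaviour at radii where $F_v(r)=r^{n+a+2(1+p)}$ is not correct --- the max-truncation in \eqref{eq:Phi} handles those transitions automatically, as the logarithmic derivative of a maximum can only jump upward.
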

The proof of Proposition \ref{prop:Monotonicity_formula} is given in \S \ref{sec:Auxiliary_results_proofs}.

Following \cite[Definition (6.1)]{Caffarelli_Salsa_Silvestre_2008} we introduce the sequence of rescalings, $\{v_r\}_{r >0}$, of the function $v$. For $r\in (0,1)$, we define
\begin{equation}
\label{eq:d_r}
d_r:=\left(\frac{1}{r^{n+a}}\int_{\partial B_r}|v|^2|y|^a\right)^{1/2},
\end{equation}
and we let
\begin{equation}
\label{eq:Rescaling}
v_r(x,y):=\frac{v(r(x,y))}{d_r},\quad\forall (x,y)\in\RR^n\times\RR,
\end{equation}
be a rescaling of the function $v$. With the aid of Proposition \ref{prop:Monotonicity_formula} we prove the following analogue of \cite[Lemma 6.1]{Caffarelli_Salsa_Silvestre_2008}.
\begin{prop}
\label{prop:Phi_at_0}
Suppose that the assumptions of Proposition \ref{prop:Monotonicity_formula} are satisfied. Then, for all $p\in [s,2s-1/2)$, the following hold.
If
\begin{equation}
\label{eq:Fraction_d_r_r_power_finite}
\liminf_{r\downarrow 0}\frac{d_r}{r^{1+p}} <\infty,
\end{equation}
then
\begin{equation}
\label{eq:Phi_at_0_p}
\Phi^p_v(0+) = n+a+2(1+p),
\end{equation}
and if
\begin{equation}
\label{eq:Fraction_d_r_r_power_infty}
\liminf_{r\downarrow 0}\frac{d_r}{r^{1+p}} =\infty,
\end{equation}
then
\begin{equation}
\label{eq:Phi_at_0}
\Phi^p_v(0+) \geq n+a+2(1+s).
\end{equation}
\end{prop}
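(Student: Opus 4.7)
Set $G(r):=\max\{F_v(r),r^{n+a+2(1+p)}\}$ so that $\Phi_v^p(r)=d\log G/d\log r$, and write $H_p(r):=F_v(r)/r^{n+a+2(1+p)}=d_r^2/r^{2(1+p)}$. By Proposition \ref{prop:Monotonicity_formula}, $e^{Cr^\gamma}\Phi_v^p(r)$ is non-decreasing on $(0,r_0)$, so the limit $L:=\Phi_v^p(0+)$ exists and $\Phi_v^p(\rho)\geq L\,e^{-C\rho^\gamma}$ pointwise. Integrating along $(r,r_0)$ and using that $\int_0^{r_0}(1-e^{-C\rho^\gamma})d\rho/\rho<\infty$ (since $\gamma>0$) one obtains $G(r)\leq C_1 r^L$ for small $r$; comparing with the trivial $G(r)\geq r^{n+a+2(1+p)}$ and letting $r\downarrow 0$ forces the universal upper bound $L\leq n+a+2(1+p)$.

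\textbf{Proof of \eqref{eq:Phi_at_0_p}.} Assume \eqref{eq:Fraction_d_r_r_power_finite} and, for contradiction, that $L<n+a+2(1+p)$. Quasi-monotonicity gives $\Phi_v^p(\rho)\leq n+a+2(1+p)-\delta$ on some $(0,r_*)$ for a $\delta>0$. Since $\Phi_v^p(\rho)=n+a+2(1+p)$ whenever $F_v(\rho)<\rho^{n+a+2(1+p)}$, this strict inequality forces $H_p(\rho)>1$ on $(0,r_*)$, and therefore $\Phi_v^p(\rho)=d\log F_v/d\log\rho$ there. Integrating $d\log H_p/d\log\rho=\Phi_v^p(\rho)-(n+a+2(1+p))\leq-\delta$ from $r$ up to $r_*$ yields $H_p(r)\geq H_p(r_*)(r_*/r)^\delta\to\infty$, i.e.\ $d_r/r^{1+p}\to\infty$, contradicting \eqref{eq:Fraction_d_r_r_power_finite}. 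Combined with the universal upper bound this gives \eqref{eq:Phi_at_0_p}.

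\textbf{Proof of \eqref{eq:Phi_at_0}.} Assume \eqref{eq:Fraction_d_r_r_power_infty}; then $F_v(\rho)>\rho^{n+a+2(1+p)}$ for $\rho\in(0,r_*)$ and $\Phi_v^p(\rho)=d\log F_v/d\log\rho$ there. Suppose, for contradiction, $L<n+a+2(1+s)$. The same ODE argument as in Case 1, but now using the strict bound $\Phi_v^p(\rho)< n+a+2(1+s)-\delta'$ for small $\rho$ and some $\delta'>0$, yields $F_v(r)>c\,r^{n+a+2(1+s)-\delta'}$; in particular $F_v(r)>r^{n+a+2(1+s)}$ for small $r$, and $d_r/r^{1+s}\to\infty$ as $r\downarrow 0$. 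On this interval $\Phi_v^s(\rho)=d\log F_v/d\log\rho=\Phi_v^p(\rho)$, so the limit $L^s:=\Phi_v^s(0+)$ (which exists by applying Proposition \ref{prop:Monotonicity_formula} at the admissible pair $(\alpha',s)$, any $\alpha'\in(\max(2s-1,1/2),s)$) equals $L$. Since $\liminf_{r\downarrow 0}d_r/r^{1+s}=\infty$, we invoke \cite[Lemma 6.1]{Caffarelli_Salsa_Silvestre_2008}, obtaining $L^s\geq n+a+2(1+s)$, in contradiction with $L=L^s<n+a+2(1+s)$. Hence $L\geq n+a+2(1+s)$, which proves \eqref{eq:Phi_at_0}.

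\textbf{Main obstacle.} The crux is the invocation of \cite[Lemma 6.1]{Caffarelli_Salsa_Silvestre_2008} at $p=s$: that lemma is established in the drift-free setting where $L_a\widetilde u$ vanishes on $\{y=0\}\setminus\{\widetilde u=0\}$, while in our setting $L_a v$ carries an additional singular measure $h(x)\cH^n|_{\{y=0\}}$ on $\{y=0\}\setminus\{v=0\}$ (cf.\ \eqref{eq:Equality_L_a}). One must verify that, after normalization by $d_r\sim r^{1+s}$ as in the blow-ups $v_r$ of \eqref{eq:Rescaling}, this extra term decays fast enough, thanks to the H\"older estimate $|h(x)|\leq C|x|^\alpha$ from \eqref{eq:Growth_h}, to be absorbed into the error of the CSS blow-up argument, so that the Almgren frequency identity --- and hence the lower bound $n+a+2(1+s)$ --- is preserved.
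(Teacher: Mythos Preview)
Your treatment of \eqref{eq:Phi_at_0_p} is correct and is essentially the same ODE-on-$\log F_v$ computation that underlies \cite[Lemma 6.1]{Caffarelli_Salsa_Silvestre_2008}, which is what the paper cites for that case. Likewise your universal upper bound $L\le n+a+2(1+p)$ is fine.

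The genuine gap is in your proof of \eqref{eq:Phi_at_0}. You correctly identify the ``main obstacle'': \cite[Lemma 6.1]{Caffarelli_Salsa_Silvestre_2008} is proved for functions with $L_a\widetilde u$ supported on $\{\widetilde u=0\}$, whereas here $L_av$ carries the extra density $h(x)\cH^n|_{\{y=0\}}$ on $\{v>0\}$. Your proposed fix --- that after normalization by $d_r$ the term $r^{1-a}h(rx)/d_r$ is small and can be ``absorbed into the error of the CSS blow-up argument'' --- is the right intuition but is not a proof, and the reduction to $p=s$ does not help with this issue. The paper's route is substantially different and requires real work: one first proves compactness of the rescalings $\{v_r\}$ in the \emph{strong} $H^1(B_{1/8}^+,|y|^a)$ topology (Lemmas \ref{lem:Uniform_boundedness_H_1_rescalings}--\ref{lem:Uniform_boundedness_C_1_alpha_n_plus_1_ball_rescalings}), extracts a limit $v_0$ which, because $r^{1-a}h(rx)/d_r\to0$, satisfies the drift-free CSS hypotheses, applies \cite[Lemma 6.1]{Caffarelli_Salsa_Silvestre_2008} to $v_0$ (not to $v$), and then uses the scaling identity
\[
\Phi_v^p(tr)=2\,\frac{r\int_{B_r}|\nabla v_t|^2|y|^a}{\int_{\partial B_r}|v_t|^2|y|^a}+n+a+O((tr)^{2(\alpha-p+s-1/2)})
\]
together with strong $H^1$ convergence to transfer the lower bound from $v_0$ back to $\Phi_v^p(0+)$. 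The strong convergence is essential here (weak convergence would not pass to the limit in the Dirichlet-to-$L^2(\partial B_r)$ quotient), and obtaining it is nontrivial: it requires the uniform $L^\infty$ bound via Moser iteration (Lemma \ref{lem:Uniform_boundedness_rescalings}), a uniform $C^{1+\beta}(B'_{1/4})$ bound on the traces via a Krylov-type localization argument applied to an \emph{obstacle} problem (Lemma \ref{lem:Uniform_boundedness_C_1_alpha_n_ball_rescalings}), and then H\"older control of $v_r$, $\partial_{x_i}v_r$, and $|y|^a\partial_yv_r$ up to $\{y=0\}$ (Lemma \ref{lem:Uniform_boundedness_C_1_alpha_n_plus_1_ball_rescalings}). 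None of this is addressed in your sketch, and the CSS lemma cannot be invoked for $v$ (or for $\Phi_v^s$) without it.
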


Proposition \ref{prop:Phi_at_0} shows that the smallest value that the function $\Phi^p_v(r)$ can take is $n+a+2(1+s)$. This property is crucial in the proof of the optimal regularity of the solutions to the obstacle problem in \S \ref{sec:Solutions_optimal_regularity}.

The proof of Proposition \ref{prop:Phi_at_0} relies on the fact that the sequence of rescalings, $\{v_r\}_{r \geq 0}$, contains a subsequence strongly convergent in $H^1(B_1,|y|^a)$, as $r$ tends to $0$. To obtain this, we first prove a series of preliminary results. In Lemma \ref{lem:Uniform_boundedness_H_1_rescalings}, we prove the uniform boundedness in $H^1(B_1,|y|^a)$ of the sequence of rescalings, which is then used in Lemma \ref{lem:Uniform_boundedness_rescalings} to show the uniform boundedness in $L^{\infty}(B_{1/2})$, employing the Moser iterations technique. Lemma \ref{lem:Uniform_boundedness_H_1_rescalings} is not sufficient to conclude the strong convergence in $H^1(B_1,|y|^a)$ of the sequence of rescalings, as $r$ tends to $0$, and so, in Lemmas \ref{lem:Uniform_boundedness_C_1_alpha_n_ball_rescalings} and \ref{lem:Uniform_boundedness_C_1_alpha_n_plus_1_ball_rescalings} we improve the control we have on the sequence of rescalings by proving a uniform bound in H\"older spaces. The results of Lemmas \ref{lem:Uniform_boundedness_C_1_alpha_n_ball_rescalings} and \ref{lem:Uniform_boundedness_C_1_alpha_n_plus_1_ball_rescalings} have their analogues in \cite[Lemma 4.1 and Proposition 4.3]{Caffarelli_Salsa_Silvestre_2008}, respectively. The proofs of the latter results in \cite{Caffarelli_Salsa_Silvestre_2008} rely on the properties of the function $\widetilde u$, defined on \cite[p. 433]{Caffarelli_Salsa_Silvestre_2008}. The analogue in our case of the function $\widetilde u$ in \cite{Caffarelli_Salsa_Silvestre_2008} is the function $v$ defined in \eqref{eq:Auxiliary_function_v}. The function $v$ does not satisfy the properties  of function $\widetilde u$, because $L_a v$ is a singular measure with nontrivial support on $\{y=0\}\backslash\{v=0\}$. For this reason, we cannot adapt the approach of \cite{Caffarelli_Salsa_Silvestre_2008} to our framework, and so we proceed in a different way which we outline in the sequel.

We begin with
\begin{lem}[Uniform boundedness in $H^1(B_1,|y|^a)$]
\label{lem:Uniform_boundedness_H_1_rescalings}
We assume that the hypotheses of Proposition \ref{prop:Monotonicity_formula} hold. Let $\alpha\in (1/2,s)$, and $p \in [s,\alpha+s-1/2)$, and assume that condition \eqref{eq:Fraction_d_r_r_power_infty} holds. Then there are positive constants, $C$ and $r_0$, such that
\begin{equation}
\label{eq:Uniform_boundedness_H_1_rescalings}
\|v_r\|_{H^{1}(B_1,|y|^a)} \leq C,\quad\forall r\in (0,r_0).
\end{equation}
\end{lem}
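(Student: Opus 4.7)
The plan is to obtain the $H^1$ bound by combining the monotonicity formula of Proposition \ref{prop:Monotonicity_formula} with a weighted energy identity on $B_1$, exploiting the normalization $F_{v_r}(1)=1$ that is forced by the choice of $d_r$. First I would rescale the distributional equations \eqref{eq:Upper_bound_L_a}--\eqref{eq:Equality_L_a} via $(x,y)\mapsto r(x,y)$ to obtain, as measures on $\mathbb{R}^{n+1}$,
$$
L_a v_r \;=\; h_r(x)\,\mathcal{H}^n|_{\{y=0\}} \;-\; \sigma_r,
$$
where $h_r(x):=(r^{1-a}/d_r)\,h(rx)$ and $\sigma_r\geq 0$ is a Borel measure supported in $\{v_r=0\}\cap\{y=0\}$. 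The growth bound \eqref{eq:Growth_h} then gives $|h_r(x)|\leq C\,r^{2s+\alpha}/d_r$ on $B_1'$; factoring $r^{2s+\alpha}/d_r=r^{(2s-1+\alpha)-p}\cdot(r^{1+p}/d_r)$, the first factor is a positive power of $r$ (since $s>1/2$ implies $\alpha+s-1/2<\alpha+2s-1$, and $p<\alpha+s-1/2$) while the second tends to zero by \eqref{eq:Fraction_d_r_r_power_infty}, so $\|h_r\|_{L^\infty(B_1')}\to 0$ as $r\downarrow 0$.

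Next I would extract information from the monotonicity formula. Condition \eqref{eq:Fraction_d_r_r_power_infty} gives $d_r^2/r^{2(1+p)}\to\infty$, hence $F_v(r)>r^{n+a+2(1+p)}$ on some interval $(0,r_0)$, and Proposition \ref{prop:Monotonicity_formula} yields a uniform upper bound $\Phi_v^p(r)\leq M$ there. Differentiating $F_v$ in polar form produces the flux identity
$$
\Phi_v^p(r)\;=\;(n+a)+\frac{2}{F_v(r)}\int_{\partial B_r}v\,v_\nu\,|y|^a\,d\sigma,
$$
which, after the change of variables $x=r\omega$ and using the definition of $d_r$, translates to
$$
\int_{\partial B_1}v_r(v_r)_\nu\,|y|^a \;=\;\tfrac{1}{2}\bigl(\Phi_v^p(r)-(n+a)\bigr)\;\leq\; \tfrac{M-(n+a)}{2}.
$$

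Then I would apply the energy identity on $B_1$. Integration by parts (justified since $v_r$ is $L_a$-harmonic off $\{y=0\}$ and admits a weighted Neumann trace on $\{y=0\}$ through the Caffarelli--Silvestre correspondence) gives
$$
\int_{B_1}|\nabla v_r|^2|y|^a \;=\; \int_{\partial B_1}v_r(v_r)_\nu|y|^a \;-\;\int v_r\,dL_av_r,
$$
and the last integral reduces to $\int_{B_1'} v_r h_r\,dx$ because $v_r\equiv 0$ on $\operatorname{supp}\sigma_r$. A weighted Poincar\'e inequality $\|v_r\|_{L^2(B_1,|y|^a)}^2\leq C\bigl(\|\nabla v_r\|_{L^2(B_1,|y|^a)}^2+F_{v_r}(1)\bigr)$ together with $F_{v_r}(1)=1$, and a weighted trace inequality $\|v_r\|_{L^2(B_1')}\leq C\|v_r\|_{H^1(B_1,|y|^a)}$ (both standard for the Muckenhoupt $A_2$ weight $|y|^a$), let me estimate $|\int v_r h_r|\leq \|h_r\|_{L^\infty(B_1')}\|v_r\|_{L^1(B_1')}$ and absorb it via Young's inequality, using $\|h_r\|_\infty\to 0$. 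This yields a uniform bound on $\|\nabla v_r\|_{L^2(B_1,|y|^a)}$, whence Poincar\'e produces the full $H^1$ bound \eqref{eq:Uniform_boundedness_H_1_rescalings}.

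The hard part will be the rigorous derivation of the flux identity and its rescaled counterpart: this is the only place where the monotonicity formula is fully exploited, and it requires that $F_v(r)>r^{n+a+2(1+p)}$ on an \emph{interval} $(0,r_0)$, a statement that must be extracted from the $\liminf$ formulation of \eqref{eq:Fraction_d_r_r_power_infty} rather than merely along a subsequence. A secondary technical obstacle is justifying the integration by parts when the right-hand side of the equation for $v_r$ is a measure with nontrivial support on $\{y=0\}\setminus\{v_r=0\}$; this is precisely the feature that distinguishes our setting from \cite{Caffarelli_Salsa_Silvestre_2008} and requires carefully handling the $\sigma_r$ contribution even though $v_r$ vanishes on its support.
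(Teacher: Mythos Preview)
Your proposal is correct and follows essentially the same route as the paper: both derive the flux identity $\int_{\partial B_1}v_r(v_r)_\nu|y|^a=\tfrac12(\Phi_v^p(r)-(n+a))$, invoke the monotonicity formula to bound $\Phi_v^p$, use integration by parts to convert this into a bound on $\int_{B_1}|\nabla v_r|^2|y|^a$ modulo the cross term $\int_{B_1'}v_r h_r$, and finish with the weighted Poincar\'e inequality. Two simplifications are available. First, the paper bounds the cross term directly from the pointwise estimate \eqref{eq:Ineq_v_on_R_n}, giving $\bigl|r\int_{B_r}vL_av\bigr|/F_v(r)\le Cr^{2(\alpha-p+s-1/2)}\to 0$, which avoids the trace inequality and absorption step you propose. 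Second, your ``hard part'' is not actually hard: $\liminf_{r\downarrow 0}d_r/r^{1+p}=\infty$ is equivalent to $\lim_{r\downarrow 0}d_r/r^{1+p}=\infty$, so the inequality $F_v(r)>r^{n+a+2(1+p)}$ automatically holds on a full interval $(0,r_0)$.
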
 

\begin{proof}
From identity \eqref{eq:Rescaling}, the following hold, for all $r>0$,
\begin{align}
\label{eq:Formula_grad_v_r}
\int_{B_1}|\nabla v_r|^2 |y|^a &=\frac{r\int_{B_r} |\nabla v|^2|y|^a}{\int_{\partial B_r} |v|^2|y|^a},\\
\label{eq:v_r_partial_B_1}
\int_{\partial B_1} |v_r|^2 |y|^a&=1.
\end{align}
From \eqref{eq:d_r} and condition \eqref{eq:Fraction_d_r_r_power_infty}, there is a positive constant, $r_0$, such that
\begin{equation}
\label{eq:Inequality_d_r}
\int_{\partial B_r}|v|^2|y|^a \geq r^{n+a+2(1+p)},\quad\forall r\in (0,r_0),
\end{equation}
and so, identities \eqref{eq:F} and \eqref{eq:Phi} give us that
$$
\Phi^p_v(r) = r\frac{d}{dr} \log \int_{\partial B_r}|v|^2|y|^a, \quad\forall r\in (0,r_0).
$$
From identities \eqref{eq:Identity_Phi_p} and \eqref{eq:v_v_nu}, it follows that
\begin{align}
\Phi^p_v(r) &= r\frac{2\int_{B_r}|\nabla v|^2|y|^a + 2\int_{B_r} v L_a v}{\int_{\partial B_r}|v|^2|y|^a}+n+a\notag\\
\label{eq:Expansion_Phi}
&= 2\int_{B_1}|\nabla v_r|^2 |y|^a + 2 \frac{r\int_{B_r} v L_a v}{\int_{\partial B_r} |v|^2|y|^a}+n+a\quad\hbox{(by identity \eqref{eq:Formula_grad_v_r}).}
\end{align}
Using \eqref{eq:Formula_grad_v_r}, \eqref{eq:Ineq_v_on_R_n}, \eqref{eq:Auxiliary_function_v}, \eqref{eq:Equality_L_a}, \eqref{eq:Definition_h} and \eqref{eq:Growth_h}, together with the preceding inequality, we see that
\begin{equation}
\label{eq:Inequality_second_term_Phi}
\frac{\left|r\int_{B_r} v L_a v\right|}{\int_{\partial B_r} |v|^2|y|^a} \leq \frac{Cr^{1+(1+\alpha)+\alpha+n}}{r^{2(1+p)+n+a}} = Cr^{2(\alpha-p+s-1/2)},\quad\forall r\in (0,r_0).
\end{equation}
From our assumption that $\alpha\in (1/2,s)$ and $p \in [s,\alpha+s-1/2)$, the right-hand side in the preceding inequality tends to zero, as $r\rightarrow 0$. By Proposition \ref{prop:Monotonicity_formula} and identity \eqref{eq:Expansion_Phi}, we obtain that there are positive constants, $C$ and $r_0$, such that
\begin{equation}
\label{eq:Uniform_bound_grad_v_r}
\int_{B_1}|\nabla v_r|^2 |y|^a  \leq C,\quad\forall r\in (0,r_0).
\end{equation}
By \cite[Lemma 2.12]{Caffarelli_Salsa_Silvestre_2008}, we obtain that, for some positive constant $C=C(n,s)$, we have that
$$
\int_{\partial B_1} |v_r(x,y)-v_r(t(x,y))|^2|y|^a \leq C(1-t) \int_{B_1} |\nabla v_r|^2|y|^a,\quad\forall t \in (0,1).
$$
%TODO: To leave or not to leave
(Notice that on the right-hand side of the Poincar\'e inequality in \cite[Lemma 2.12]{Caffarelli_Salsa_Silvestre_2008}, the factor $(1-t)$ is missing.) Because the uniform bound \eqref{eq:v_r_partial_B_1} holds, the preceding inequality gives us that
$$
\int_{\partial B_1} |v_r(t(x,y))|^2|y|^a \leq 2C(1-t) \int_{B_1} |\nabla v_r|^2|y|^a+2,\quad\forall t \in (0,1),
$$
and multiplying by $t^a$, and integrating in the $t$-variable, we obtain
\begin{align}
\int_{B_1} |v_r|^2|y|^a &=\int_0^1\int_{\partial B_1} |v_r(t(x,y))|^2|ty|^a\notag\\
&\leq 2C \int_{B_1} |\nabla v_r|^2|y|^a\int_0^1(1-t)t^a\, dt+2\int_0^1t^a\, dt\notag\\
\label{eq:Uniform_bound_v_r}
&\leq C, \quad \forall r \in (0,r_0),
\end{align}
where $C$ is a positive constant. The last inequality follows from the uniform bound \eqref{eq:Uniform_bound_grad_v_r} and the fact that the constant $a\in (-1,0)$, since we assume that $s\in (1/2,1)$. 

Inequalities \eqref{eq:Uniform_bound_v_r} and \eqref{eq:Uniform_bound_grad_v_r} give us \eqref{eq:Uniform_boundedness_H_1_rescalings}. This completes the proof.
\end{proof}

As a consequence of Lemma \ref{lem:Uniform_boundedness_H_1_rescalings} we have

\begin{lem}[Uniform boundedness in $L^{\infty}(B_{1/2})$]
\label{lem:Uniform_boundedness_rescalings}
Suppose that the assumptions of Lemma \ref{lem:Uniform_boundedness_H_1_rescalings} hold. Then there are positive constants, $C$ and $r_0$, such that
\begin{equation}
\label{eq:Uniform_boundedness_rescalings}
\|v_r\|_{L^{\infty}(B_{1/2})} \leq C,
\quad\forall r\in (0,r_0).
\end{equation}
\end{lem}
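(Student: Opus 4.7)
The plan is as follows. First I would compute the distributional equation satisfied by the rescalings $v_r$. A change of variables in the weak formulation of \eqref{eq:Upper_bound_L_a}, combined with the growth estimate \eqref{eq:Growth_h} of $h$ and the lower bound \eqref{eq:Inequality_d_r} on $d_r$, shows that for all nonnegative test functions $\phi\in C^\infty_c(B_1)$,
\begin{equation*}
-\int_{B_1}|y|^a\,\nabla v_r\cdot\nabla\phi \;\leq\; \int_{B'_1} g_r(x')\,\phi(x',0)\,dx',
\end{equation*}
where $|g_r(x')|\leq C\,r^{1-a-(1+p)+\alpha}|x'|^\alpha = C\,r^{2s+\alpha-p-1}|x'|^\alpha$ for $r\in(0,r_0)$. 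Under the assumption $p\in[s,\alpha+s-1/2)$ together with $s>1/2$, the exponent $2s+\alpha-p-1$ is positive, so $\|g_r\|_{L^\infty(B'_1)}$ is uniformly bounded in $r$. The same inequality holds with $v_r$ replaced by $-v_r$, so both $v_r^+$ and $v_r^-$ are subsolutions in the weak sense, up to the bounded source $g_r$ supported on the hyperplane $\{y=0\}$.

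Next I would run a Moser iteration in the weighted space $H^1(B_1,|y|^a)$, using that the weight $|y|^a$ with $a=1-2s\in(-1,0)$ belongs to the Muckenhoupt class $A_2$. By the Fabes--Kenig--Serapioni theory, there exist $\kappa>1$ and a weighted Sobolev inequality of the form
\begin{equation*}
\Bigl(\int_{B_1}|y|^a|\eta\, w|^{2\kappa}\Bigr)^{1/\kappa} \;\leq\; C\int_{B_1}|y|^a|\nabla(\eta\, w)|^2, \qquad \eta\in C^\infty_c(B_1).
\end{equation*}
For $q\geq 1$ and radii $1/2\leq\rho'<\rho\leq 1$, I pick a smooth cutoff $\eta$ with $\eta\equiv 1$ on $B_{\rho'}$, $\supp\eta\subset B_\rho$ and $|\nabla\eta|\leq C/(\rho-\rho')$, and test the weak inequality satisfied by $v_r^{\pm}$ against $\eta^2 (v_r^{\pm})^{2q-1}$. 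This yields the Caccioppoli-type estimate
\begin{equation*}
\int_{B_\rho}|y|^a\eta^2|\nabla (v_r^{\pm})^q|^2 \;\leq\; \frac{Cq^2}{(\rho-\rho')^2}\int_{B_\rho}|y|^a(v_r^{\pm})^{2q} + Cq\,\|g_r\|_{L^\infty(B'_1)}\int_{B'_\rho}(v_r^{\pm})^{2q-1}\eta^2\,dx'.
\end{equation*}

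The main obstacle is the boundary integral over $\{y=0\}$ coming from the singular source $\mathcal{H}^n|_{\{y=0\}}$, a feature that has no analogue in the smooth setting of \cite{Caffarelli_Salsa_Silvestre_2008}. To handle it, I would invoke a weighted trace inequality (available for the $A_2$-weight $|y|^a$ with $a\in(-1,1)$) of the form
\begin{equation*}
\int_{B'_\rho} w^2\,dx' \;\leq\; \delta\int_{B_\rho}|y|^a|\nabla w|^2 + \frac{C(\delta)}{(\rho-\rho')^2}\int_{B_\rho}|y|^a w^2,
\end{equation*}
applied to $w=\eta (v_r^{\pm})^q$, together with Young's inequality to dominate $(v_r^{\pm})^{2q-1}$ by $\tfrac{1}{2}(v_r^{\pm})^{2q}+\tfrac12(v_r^{\pm})^{2q-2}$; choosing $\delta$ small absorbs the gradient contribution into the left-hand side.

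Combining this with the weighted Sobolev inequality leads to the reverse-Hölder iteration
\begin{equation*}
\|v_r^{\pm}\|_{L^{2q\kappa}(B_{\rho'},|y|^a)} \;\leq\; \Bigl(\frac{C_0 q}{\rho-\rho'}\Bigr)^{1/q}\bigl(\|v_r^{\pm}\|_{L^{2q}(B_\rho,|y|^a)}+1\bigr).
\end{equation*}
Finally, I would iterate this inequality along the sequences $q_k=\kappa^k$ and $\rho_k=\tfrac12+2^{-k-1}$, summing the resulting convergent logarithmic series and using Lemma~\ref{lem:Uniform_boundedness_H_1_rescalings} as the base-case bound at $q_0=1$, to obtain \eqref{eq:Uniform_boundedness_rescalings}.
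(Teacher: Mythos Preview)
Your overall strategy is exactly that of the paper: Moser iteration in the weighted space $H^1(B_1,|y|^a)$, using a trace/interpolation inequality to control the boundary integral coming from the singular source on $\{y=0\}$, and the Fabes--Kenig--Serapioni Sobolev inequality for the $A_2$ weight $|y|^a$ to iterate. So you have identified all the right ingredients.

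There is, however, a genuine gap in the step where you handle the exponent mismatch. You test with $\eta^2(v_r^\pm)^{2q-1}$, so the boundary term carries the power $2q-1$ rather than $2q$, and you propose to fix this by the AM--GM bound $(v_r^\pm)^{2q-1}\le\tfrac12(v_r^\pm)^{2q}+\tfrac12(v_r^\pm)^{2q-2}$. The $(v_r^\pm)^{2q-2}$ contribution cannot be absorbed: if you apply the trace inequality to it you produce a gradient term with the wrong exponent, and if you instead use $(v_r^\pm)^{2q-1}\le (v_r^\pm)^{2q}+1$ you arrive at your stated iteration inequality with the additive ``$+1$''. That recursion does \emph{not} iterate to a finite bound: with $X_{k+1}\le D_k(X_k+1)$ and $D_k\to 1$, the additive constants accumulate (for instance, if $D_k\equiv 1$ one gets $X_k\le X_0+k$).

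The paper resolves this by the classical shift trick: it sets $q:=v_r^\pm+k$ with $k:=r^{2s+\alpha}/d_r$ and tests with $\eta^2(q^\beta-k^\beta)$. Since $q\ge k$, the boundary term $\int_{B'_1}|h(rx)|\,q^\beta\eta^2$ is bounded by $\tfrac{r^{2s+\alpha}}{k\,d_r}\int_{B'_1}q^{\beta+1}\eta^2=\int_{B'_1}q^{\beta+1}\eta^2$, so the exponents match without any additive remainder, and the iteration closes cleanly to $\sup_{B_{1/2}}q\le C\|q\|_{L^2(B_1,|y|^a)}$. Undoing the shift and using $k\le r^{2s+\alpha-(1+p)}\to 0$ then gives the claim. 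Your argument becomes correct if you insert exactly this shift (equivalently, replace $v_r^\pm$ by $v_r^\pm+\|g_r\|_{L^\infty(B'_1)}$) before testing; without it the iteration diverges.
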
 

%TODO: The use of the Moser iterations has a little twist because of the delta measure of L_a v_r on $\RR^n\times\{y=0\}$.

\begin{proof}
We prove the supremum estimate \eqref{eq:Uniform_boundedness_rescalings} using the Moser iterations method. We let $\eta:\RR^{n+1}\rightarrow [0,1]$ be a smooth function with compact support in $B_1$. For $r>0$, we let
\begin{equation}
\label{eq:Choice_k}
k:=\frac{r^{2s+\alpha}}{d_r},
\end{equation}
and we consider the following auxiliary functions,
\begin{align*}
q:=v_r^{\pm}+k,\quad\hbox{and}\quad w:=\eta^2(q^{\beta}-k^{\beta}),
\end{align*}
where $\beta$ is a positive constant. From \eqref{eq:Auxiliary_function_v}, \eqref{eq:Equality_L_a} \eqref{eq:Definition_h} and \eqref{eq:Rescaling}, we have that
\begin{align*}  
L_a v_r &=0\quad\hbox{on } B_1\backslash B_1',\\
L_a v_r &=\frac{r^{1-a}}{d_r} h(rx) \cH^{n}|_{\{y=0\}}\quad\hbox{on } B_1\backslash \{y=0, v_r=0\}.
\end{align*}
Because $w=0$ when $v_r=0$, the preceding identities give us that
\begin{align}
\label{eq:Operator_applied_to_auxiliary_function_1}
\int_{B_1} w L_a v_r &= \frac{r^{1-a}}{d_r} \int_{B_1'} h(rx)w(x).
\end{align}
Recall that we also have that
\begin{align}
\label{eq:Operator_applied_to_auxiliary_function_2}
\int_{B_1} w L_a v_r &=-\int_{B_1}\nabla v_r \dotprod\nabla w|y|^a.
\end{align}
Using the definitions of the functions $p$ and $w$, we obtain the identities
\begin{align*}
\nabla v_r \dotprod\nabla w &= \pm \nabla q\dotprod\left(\beta\eta^2q^{\beta-1}\nabla q+2\eta\nabla\eta \left(q^{\beta}-k^{\beta}\right)\right),\\
q^{\beta-1} |\nabla q|^2 &= \frac{4}{(\beta+1)^2} \left|\nabla q^{(\beta+1)/2}\right|^2,
\end{align*}
which combined with identities \eqref{eq:Operator_applied_to_auxiliary_function_1} and \eqref{eq:Operator_applied_to_auxiliary_function_2}, and the fact that $0\leq w \leq \eta^2 q^{\beta}$, gives us that 
\begin{align}
\label{eq:Operator_applied_to_auxiliary_function_int_by_parts}
\frac{4\beta}{(\beta+1)^2} \int_{\RR^{n+1}} \left|\nabla q^{(\beta+1)/2}\right|^2 \eta^2 |y|^a
&\leq \int_{\RR^{n+1}} 2\eta|\nabla\eta||\nabla q| q^{\beta} |y|^a + \frac{r^{1-a}}{d_r} \int_{B_1'} |h(rx)| q^{\beta} \eta^2.
\end{align}
We also have that
\begin{align*}
\int_{\RR^{n+1}} 2\eta|\nabla\eta||\nabla q| q^{\beta} |y|^a 
&=\frac{4}{\beta+1} \int_{\RR^{n+1}}\eta|\nabla\eta| q^{(\beta+1)/2} \left|\nabla q^{(\beta+1)/2}\right||y|^a\\
&\leq \frac{4\beta\eps}{(\beta+1)^2} \int_{\RR^{n+1}}\eta^2 \left|\nabla q^{(\beta+1)/2}\right|^2|y|^a
+\frac{1}{4\eps\beta} \int_{\RR^{n+1}}|\nabla\eta|^2 q^{\beta+1} |y|^a,
\end{align*}
for all $\eps>0$. We choose $\eps=1/2$ in the preceding inequality, which we combine with inequalities \eqref{eq:Growth_h} and \eqref{eq:Operator_applied_to_auxiliary_function_int_by_parts}, the fact that $q \geq k$ and $a=1-2s$, to obtain that there is a positive constant, $C$, such that
\begin{align}
\label{eq:Estimate_grad_p_1}
\int_{\RR^{n+1}} \left|\nabla q^{(\beta+1)/2}\right|^2 \eta^2 |y|^a \leq C\int_{\RR^{n+1}} |\nabla\eta|^2 q^{\beta+1} |y|^a
+C\beta\frac{r^{2s+\alpha}}{d_r} \int_{B_1'} \frac{q^{\beta+1}}{k}\eta^2.
\end{align}
By the Trace Theorem \cite[Theorem 1.5.1.1]{Grisvard} applied with $s=1/2+\sigma$, and $\sigma \in (0,1/2)$, we obtain
\begin{equation*}
\int_{\RR^n} \left|q^{(\beta+1)/2}\eta\right|^2 \leq C\|q^{(\beta+1)/2}\eta\|^2_{H^{1/2+\sigma}(\RR^{n+1})}.
\end{equation*}
The Interpolation Inequality \cite[Theorem 1.4.3.3]{Grisvard} gives us, by choosing $\sigma:=1/4$ in the preceding estimate, that there is a positive constant, $C$, such that for all $\eps\in (0,1)$, we have 
\begin{align*}
\|q^{(\beta+1)/2}\eta\|^2_{H^{1/2+\sigma}(\RR^{n+1})} 
&\leq \eps\|q^{(\beta+1)/2}\eta\|^2_{H^1(\RR^{n+1})} + C\eps^{-3} \|q^{(\beta+1)/2}\eta\|^2_{L^2(\RR^{n+1})}\\
&\leq \eps\int_{B_1} \eta^2 \left|\nabla q^{(\beta+1)/2}\right|^2 + C\eps^{-3}\int_{B_1} \left(\eta^2+|\nabla\eta|^2\right)q^{\beta+1}.
\end{align*}
Combining the previous two inequalities, we obtain
\begin{equation}
\label{eq:Estimate_p_beta_plus_1}
\int_{\RR^n} \eta^2 q^{\beta+1} \leq \eps\int_{B_1} \eta^2 \left|\nabla q^{(\beta+1)/2}\right|^2 + C\eps^{-3}\int_{B_1} \left(\eta^2+|\nabla\eta|^2\right)q^{\beta+1},\quad\forall \eps>0.
\end{equation}
We choose
$$
\eps:=\frac{k d_r}{2C\beta r^{2s+\alpha}}.
$$
Using inequalities \eqref{eq:Estimate_p_beta_plus_1} and \eqref{eq:Estimate_grad_p_1}, together with the fact that $a=1-2s<0$, when $s\in (1/2,1)$, we obtain
\begin{align*}
\int_{\RR^{n+a}} \eta^2 \left|\nabla q^{(\beta+1)/2}\right|^2  |y|^a 
\leq \left(\frac{C\beta r^{2s+\alpha}}{k d_r}\right)^3\int_{\RR^{n+1}} \left(\eta^2+|\nabla\eta|^2\right) q^{\beta+1} |y|^a.
\end{align*}
The choice \eqref{eq:Choice_k} of the constant $k$ now gives us that
\begin{align*}
\int_{\RR^{n+1}} \eta^2 \left|\nabla q^{(\beta+1)/2}\right|^2  |y|^a \leq C \beta^3\int_{\RR^{n+1}} \left(\eta^2+|\nabla\eta|^2\right) q^{\beta+1} |y|^a.
\end{align*}
We can now apply the Moser iteration method to conclude that
\begin{equation}
\label{eq:Sup_estimate_p}
\sup_{B_{1/2}} q \leq C \left(\int_{B_1} |q|^2|y|^a\right)^{1/2}.
\end{equation}
The Moser iteration method is applied as in \cite[p. 195-197]{GilbargTrudinger} with the observation that we replace the application of the classical Sobolev inequality \cite[Inequality (7.26)]{GilbargTrudinger} with the Sobolev inequality suitable for the weighted Sobolev space $H^1(B_1,|y|^a)$ obtained in \cite[Theorem (1.6)]{Fabes_Kenig_Serapioni_1982a}. We apply \cite[Theorem (1.6)]{Fabes_Kenig_Serapioni_1982a} to the function $\eta q^{(\beta+1)/2}$ with $p=2$, and we notice that the weight $\fw(x,y)=|y|^a$ belongs to the Muckenhoupt $A_2$ class of functions.

From definition \eqref{eq:Choice_k} of the constant $k$, definition \eqref{eq:d_r} of $d_r$ and inequality \eqref{eq:Inequality_d_r}, it follows that
$$
k \leq r^{2s+\alpha-(1+p)},
$$
and so, using the definition of the auxiliary function $q$, estimate \eqref{eq:Sup_estimate_p} gives that
$$
\sup_{B_{1/2}} v_r^{\pm} \leq C \left(\int_{B_1} |v_r|^2|y|^a\right)^{1/2} + Cr^{2s+\alpha-(1+p)}.
$$
From our assumption that $s\in (1/2, 1)$, $\alpha\in (1/2,s)$ and $p \in (s,\alpha+s-1/2)$, we see that the term $r^{2s+\alpha-(1+p)}$ tends to zero, as $r\rightarrow 0$. From estimate \eqref{eq:Uniform_boundedness_H_1_rescalings}, it follows that
$$
\sup_{B_{1/2}} v_r^{\pm} \leq C,
$$
for some positive constant $C$, and for all $r>0$ sufficiently small. The preceding estimate is equivalent to \eqref{eq:Uniform_boundedness_rescalings}.
\end{proof}

\begin{lem}[Uniform Schauder estimates on $B'_{1/4}$]
\label{lem:Uniform_boundedness_C_1_alpha_n_ball_rescalings}
Let $s\in (1/2,1)$ and $\alpha\in ((2s-1)\vee 1/2, s)$. Suppose that $\varphi\in C^{2s+\alpha}(\RR^n)$, $u\in C^{1+\alpha}(\RR^n)$, and that $u$ is a solution to problem \eqref{eq:Obstacle_problem_simple}. Let $p \in (s,\alpha+s-1/2)$, and assume that condition \eqref{eq:Fraction_d_r_r_power_infty} holds. Then for all $\beta \in (0,2s-1)$, there are positive constants, $C$ and $r_0$, such that
\begin{equation}
\label{eq:Uniform_boundedness_C_1_alpha_n_ball_rescalings}
\|v_r\|_{C^{1+\beta}(B'_{1/4})} \leq C,\quad\forall r\in (0,r_0).
\end{equation}
\end{lem}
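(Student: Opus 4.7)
The plan is to recognize that the trace $\tilde v_r(\cdot) := v_r(\cdot, 0)$ satisfies, on $B'_1$, a fractional obstacle problem with uniformly bounded $C^\alpha$ data, and then use a cut-off (Krylov-type localization) to convert it into the global obstacle problem to which \cite[Theorem 5.8]{Silvestre_2007} applies. From equations \eqref{eq:Equation_v}--\eqref{eq:Equality_L_a}, the rescaling \eqref{eq:Rescaling}, and the Dirichlet-to-Neumann relation \eqref{eq:Dirichlet_to_Neumann_map}, the trace $\tilde v_r$ satisfies
\begin{equation*}
(-\Delta)^s \tilde v_r = h_r \text{ on } \{\tilde v_r>0\}\cap B'_1, \qquad (-\Delta)^s \tilde v_r \geq h_r \text{ on } B'_1, \qquad \tilde v_r \geq 0,
\end{equation*}
where $h_r(x) = c\cdot r^{1-a} d_r^{-1} h(rx)$ for some dimensional constant $c$. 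The lower bound \eqref{eq:Inequality_d_r} (valid under \eqref{eq:Fraction_d_r_r_power_infty}) combined with \eqref{eq:Growth_h} yields $\|h_r\|_{C^\alpha(B'_1)} \leq C r^{2s+\alpha-1-p}$, uniformly bounded in $r$ (in fact tending to zero) since $p<\alpha+s-1/2$ by hypothesis.

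To convert this local obstacle problem into a global one, I adapt Krylov's localization \cite[Theorem 8.11.1]{Krylov_LecturesHolder}. Fix $\chi \in C^\infty_c(\RR^n;[0,1])$ with $\chi \equiv 1$ on $B'_{1/4}$ and $\supp \chi \subset B'_{1/2}$, and let $w_r$ be the Riesz-potential solution of $(-\Delta)^s w_r = \chi h_r$ on $\RR^n$; by \cite[Proposition 2.8]{Silvestre_2007} applied to the uniformly $C^\alpha$-bounded source $\chi h_r$, one has $\|w_r\|_{C^{2s+\alpha}(\RR^n)} \leq C$ uniformly in $r$. The function $U_r := \chi \tilde v_r - w_r$ then satisfies a global obstacle problem of the form
\begin{equation*}
\min\{(-\Delta)^s U_r - F_r,\; U_r - \Psi_r\} = 0 \text{ on } \RR^n,
\end{equation*}
with obstacle $\Psi_r = -w_r$ on $B'_{1/4}$, and source $F_r$ encoding the non-local commutator $[(-\Delta)^s, \chi]\tilde v_r$, computed exactly as in \eqref{eq:Delta_s_v_cutoff}--\eqref{eq:f_eps}. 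The uniform $L^\infty$ bound of Lemma \ref{lem:Uniform_boundedness_rescalings}, combined with the decay of the Riesz kernel when integrated against $(1-\chi)\tilde v_r$ (which is supported away from $B'_{1/4}$), yields $\|F_r\|_{C^\alpha(\RR^n)} \leq C$ uniformly in $r$.

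Applying \cite[Theorem 5.8]{Silvestre_2007} to $U_r$ (after absorbing $F_r$ into a modified obstacle via a further Riesz-potential correction) delivers $\|U_r\|_{C^{1+\beta}(\RR^n)} \leq C$ for every $\beta\in(0, 2s-1)$, uniformly in $r$. Since $\tilde v_r = U_r + w_r$ on $B'_{1/4}$ and $w_r$ is uniformly $C^{2s+\alpha}$, the desired bound \eqref{eq:Uniform_boundedness_C_1_alpha_n_ball_rescalings} follows. The main technical obstacle will be the careful bookkeeping of the commutator term $F_r$ and the verification that the cut-off $\chi \tilde v_r$, which is not globally defined by the original PDE, indeed solves a well-posed global obstacle problem on $\RR^n$; controlling the behavior of $\tilde v_r$ outside $B'_{1/2}$ (where only the trace of a function in $H^1(B_1,|y|^a)$ is available) against the singular kernel of $(-\Delta)^s$ is the essential content of Krylov's localization adapted to the fractional obstacle setting.
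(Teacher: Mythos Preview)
Your argument contains a genuine error in the identification of the right-hand side $h_r$. The Dirichlet-to-Neumann relation \eqref{eq:Dirichlet_to_Neumann_map} equates $\lim_{y\downarrow 0}y^a u_y$ with $-(-\Delta)^s u(\cdot,0)$ only when $u$ is the canonical (Poisson) extension of its trace. The extended rescaling $v_r(x,y)$ is \emph{not} the Poisson extension of $\tilde v_r$: by \eqref{eq:Auxiliary_function_v} and \eqref{eq:Rescaling} it carries the additional term $\frac{r^{2s}}{2s\,d_r}(-\Delta)^s\varphi(O)\,|y|^{2s}$, which is $L_a$-harmonic on $\RR^n\times\RR_+$ with zero trace but nonzero Neumann data. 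Computing $(-\Delta)^s\tilde v_r$ directly from $\tilde v_r(x)=d_r^{-1}(u(rx)-\varphi(rx))$ gives, on the continuation set,
\[
(-\Delta)^s\tilde v_r(x) \;=\; -\frac{r^{2s}}{d_r}(-\Delta)^s\varphi(rx),
\]
which differs from your $h_r$ by the constant $\frac{r^{2s}}{d_r}(-\Delta)^s\varphi(O)$. Under \eqref{eq:Fraction_d_r_r_power_infty} one only has $d_r\geq r^{1+p}$ with $p>s>2s-1$, so $r^{2s}/d_r\leq r^{2s-1-p}$ with a \emph{negative} exponent; this constant is not uniformly bounded when $(-\Delta)^s\varphi(O)\neq 0$. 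The paper confronts exactly this obstruction in its Step~1, introducing the auxiliary $\psi(x)=c|x|^4\eta(x)$ with $c$ chosen so that $(-\Delta)^s\psi(O)=(-\Delta)^s\varphi(O)$, and working with $w_r=d_r^{-1}(u(r\cdot)-\varphi(r\cdot)+\psi(r\cdot))$ in place of $\tilde v_r$; without an analogous cancellation device your source is not under control and the remainder of the argument cannot proceed.

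A secondary gap concerns the commutator $F_r$: Lemma~\ref{lem:Uniform_boundedness_rescalings} bounds $v_r$ only on $B_{1/2}$, while $\tilde v_r(x)=d_r^{-1}(u(rx)-\varphi(rx))$ is unbounded as $|x|\to\infty$ since $d_r\to0$. A single cutoff $\chi$ does not suffice to control the nonlocal tail in $[(-\Delta)^s,\chi]\tilde v_r$; the paper's Step~3 handles this with a nested sequence of cutoffs $\chi_k$ supported in $B'_{r_{k+1}}$ and Krylov's geometric iteration, feeding the $C^{1+\beta}$ norm on each ball back into the commutator bound on the next.
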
 

\begin{proof}
In this proof we do not use the $L_a$-harmonic extension of the rescaling sequence, $\{v_r\}_{r>0}$. We divide the proof into several steps. In Step \ref{step:Uniform_bound_h_r_w_r}, we replace the sequence of rescalings, $\{v_r\}_{r>0}$, by a suitable modification \eqref{eq:Definition_v_r_modified} which solves the obstacle problem \eqref{eq:Obstacle_problem_w_r}, where now a non-zero source function, $h_r$, appears. We prove that the sequence of source functions, $\{h_r\}_{r>0}$, satisfies the uniform supremum estimate \eqref{eq:Uniform_bound_h_r_w_r}. In Step \ref{step:Localization_w_r}, we localize our sequence of modified rescaling functions, and we use inequality \eqref{eq:Uniform_bound_h_r_w_r} to prove the uniform global Schauder estimate \eqref{eq:Schauder_estimate_w_r_cutoff}. Finally, in Step \ref{step:Holder_continuity}, we use a localization method of Krylov \cite[Theorem 8.11.1]{Krylov_LecturesHolder} to prove the uniform Schauder estimate \eqref{eq:Uniform_boundedness_C_1_alpha_n_ball_rescalings} satisfied by the sequence of rescalings, $\{v_r\}_{r>0}$.

\begin{step}
\label{step:Uniform_bound_h_r_w_r}
We recall from \eqref{eq:Auxiliary_function_v} and \eqref{eq:Rescaling} that, restricted to the hyperplane $\{y=0\}$, the rescaling functions, $v_r$, take the following form
\begin{equation}
\label{eq:v_r_on_R_n}
v_r(x)=\frac{u(rx)-\varphi(rx)}{d_r},\quad\forall x\in\RR^n,\ \forall r>0,
\end{equation}
where $d_r$ is defined in \eqref{eq:d_r}. Because $u$ is a solution to the obstacle problem
\begin{equation}
\label{eq:Obstacle_problem_u}
\min\{(-\Delta)^s u, u-\varphi\}=0\quad\hbox{on }\RR^n,
\end{equation}
we see that $v_r$ is a solution to the obstacle problem
\begin{equation}
\label{eq:Obstacle_problem_v}
\min\{(-\Delta)^s v_r- f_r, v_r\}=0\quad\hbox{on }\RR^n,
\end{equation}
where the function $f_r$ is defined by
\begin{equation*}
f_r(x):=-\frac{r^{2s}}{d_r}(-\Delta)^s\varphi(rx),\quad\forall x\in\RR^n.
\end{equation*}
We would like to derive a uniform bound on $\|f_r\|_{L^{\infty}(B'_1)}$, for all $r>0$ sufficiently small. Because we do not have a uniform estimate on $r^{2s}/d_r(-\Delta)^s\varphi(O)$, for $r>0$ sufficiently small, we are not able to find an uniform bound on $\|f_r\|_{L^{\infty}(B'_1)}$, and so we choose a different approach. 
%TODO: Maybe add details here
We replace the rescaling functions \eqref{eq:v_r_on_R_n} with the following modified version,
\begin{equation}
\label{eq:Definition_v_r_modified}
w_r(x):=\frac{u(rx)-\varphi(rx)+\psi(rx)}{d_r},\quad\forall x\in\RR^n,\quad \forall r>0.
\end{equation}
We define the auxiliary function $\psi$ by
\begin{equation}
\label{eq:Definition_psi_w_r}
\psi(x):=c|x|^4\eta(x),\quad\forall x\in\RR^n,\ \forall r>0,
\end{equation}
where the positive constant $c$ will be suitably chosen below, and $\eta:\RR^n\rightarrow [0,1]$ is a smooth cut-off function with support in $B'_1$. Because $u$ solves the obstacle problem \eqref{eq:Obstacle_problem_u}, we see that the function $w_r$ solves the problem 
\begin{equation}
\label{eq:Obstacle_problem_w_r}
\min\left\{(-\Delta)^s w_r(x) - h_r(x), w_r(x)-\frac{\psi(rx)}{d_r}\right\}=0,\quad\forall x\in \RR^n.
\end{equation}
where the source function $h_r$ is now given by
\begin{equation}
\label{eq:Definition_h_r_w_r}
h_r(x):=\frac{r^{2s}}{d_r}\left((-\Delta)^s\psi(rx)-(-\Delta)^s\varphi(rx)\right),\quad\forall x\in\RR^n.
\end{equation}
Our goal in this step is to show that there are positive constants, $C$ and $r_0$, such that
\begin{equation}
\label{eq:Uniform_bound_h_r_w_r}
\|h_r\|_{L^{\infty}(B'_1)} \leq C,\quad\forall r\in (0,r_0).
\end{equation}
We can rewrite $h_r$ in the form
\begin{align*}
h_r(x)
&= \frac{r^{2s}}{d_r} \left((-\Delta)^s\psi(rx) - (-\Delta)^s\psi(O)\right) 
- \frac{r^{2s}}{d_r} \left((-\Delta)^s\varphi(rx) - (-\Delta)^s\varphi(O)\right)\\
&\quad+\frac{r^{2s}}{d_r} \left((-\Delta)^s\psi(O)) - (-\Delta)^s\varphi(O)\right).
\end{align*}
We choose the constant $c$, in definition \eqref{eq:Definition_psi_w_r} of $\psi$, such that
$$
(-\Delta)^s\psi(O) = (-\Delta)^s\varphi(O).
$$
Because $\varphi \in C^{2s+\alpha}(\RR^n)$ and $\psi\in C^{\infty}_c(\RR^n)$, we obtain by \cite[Proposition 2.6]{Silvestre_2007} that there is a positive constant, $C$, such that
\begin{align*}
\left|(-\Delta)^s\varphi(rx) - (-\Delta)^s\varphi(O)\right|&\leq C r^{\alpha},\\
\left|(-\Delta)^s\psi(rx) - (-\Delta)^s\psi(O)\right|&\leq C r^{\alpha},
\end{align*}
for all $x \in B'_1$ and all $r>0$. Thus, we obtain that 
\begin{equation}
\label{eq:Estimate_h_r_w_r}
|h_r(x)| \leq C\frac{r^{2s+\alpha}}{d_r},\quad\forall x\in B'_1,\quad \forall r>0.
\end{equation}
From definition \eqref{eq:d_r} of $d_r$, and inequality \eqref{eq:Inequality_d_r} (implied by condition \eqref{eq:Fraction_d_r_r_power_infty}), it follows that
$$
\frac{r^{2s+\alpha}}{d_r} \leq r^{2s+\alpha-(1+p)},\quad\forall r\in (0,r_0),
$$
and, from our assumption that $s\in (1/2,1)$, $\alpha\in (1/2,s)$ and $p \in (s,\alpha+s-1/2)$, we have that the bound $r^{2s+\alpha-(1+p)}$ tends to zero, as $r\rightarrow 0$. We can now see that \eqref{eq:Uniform_bound_h_r_w_r} follows from inequality \eqref{eq:Estimate_h_r_w_r} and the preceding observation.
\end{step}

\begin{step}[Localization]
\label{step:Localization_w_r}
In Step \ref{step:Uniform_bound_h_r_w_r}, we obtained the uniform estimate \eqref{eq:Uniform_bound_h_r_w_r} on $B'_1$, but not on $\RR^n$. To be able to use this estimate in Step \ref{step:Holder_continuity}, we need to localize the sequence of rescalings, $\{w_r\}_{r>0}$, defined in \eqref{eq:Definition_v_r_modified}. We do this by simply multiplying the function $w_r$ by a suitably chosen smooth cut-off function, $\chi:\RR^n\rightarrow [0,1]$, with compact support in $B'_1$. We denote
\begin{equation}
\label{eq:Definition_w_r_cutoff}
w^{\chi}_r:=\chi w_r ,\quad\forall r>0.
\end{equation}
We next want to show that, for all $\beta\in (0,2s-1)$, there is a positive constant, $C=C(\beta)$, such that the following estimate holds, for all $r>0$,
\begin{equation}
\label{eq:Schauder_estimate_w_r_cutoff}
\|w^{\chi}_r\|_{C^{1+\beta}(\RR^n)} \leq C\left(\|g_r\|_{C(\RR^n)}+\left\|(-\Delta)^s\left(\frac{\psi(rx)}{d_r}\chi\right)\right\|_{C(\RR^n)}+\left\|\frac{\psi(rx)}{d_r}\chi\right\|_{C(\RR^n)}\right).
\end{equation}
Direct calculations give us
\begin{align*}
(-\Delta)^s w^{\chi}_r(x) = \chi(x)(-\Delta)^s w_r(x) + w_r(x) (-\Delta)^s\chi(x)-\int_{\RR^n}\frac{(\chi(x)-\chi(y))(w_r(x)-w_r(y))}{|x-y|^{n+2s}}\, dy.
\end{align*}
We let the function $g_r$ be defined by
\begin{align}
\label{eq:Definition_g_r}
g_r(x):= \chi(x)h_r(x)+ w_r(x) (-\Delta)^s\chi(x)+ w^{\chi}_r(x)-\int_{\RR^n}\frac{(\chi(x)-\chi(y))(w_r(x)-w_r(y))}{|x-y|^{n+2s}}\, dy,
\end{align}
where we recall that the function $h_r$ is defined in \eqref{eq:Definition_h_r_w_r}. Because $w_r$ solves the obstacle problem \eqref{eq:Obstacle_problem_w_r}, we see that $w^{\chi}_r$ solves the problem
\begin{equation*}
\min\left\{(-\Delta)^s w^{\chi}_r(x) +w^{\chi}_r(x) - g_r(x), w^{\chi}_r(x)-\frac{\psi(rx)}{d_r}\chi(x)\right\}=0,\quad\forall x\in \RR^n.
\end{equation*}
%TODO: Explain regularity of g_r and t_r
Because $\psi \in C^{\infty}_c(\RR^n)$ and $\varphi\in C^{2s+\alpha}(\RR^n)$, it follows from definition \eqref{eq:Definition_h_r_w_r} of $h_r$, and from \cite[Proposition 2.6]{Silvestre_2007} that $h_r$ belongs to $C^{\alpha}(\RR^n)$. From Lemma \ref{lem:Regularity_g}, we obtain that the function
$$
\RR^n\ni x\mapsto  \int_{\RR^n}\frac{(\chi(x)-\chi(y))(w_r(x)-w_r(y))}{|x-y|^{n+2s}}\, dy
$$
belongs to $C^{2(1-s)}(\RR^n)$. Thus, using definition \eqref{eq:Definition_g_r} of the function $g_r$, and the fact that $u$ belongs to $C^{1+\alpha}(\RR^n)$, we obtain that $g_r \in C^{\theta}(\RR^n)$, where $\theta:=\alpha\wedge (2(1-s))$. We may now apply Lemma \ref{lem:Existence_uniqueness_linear_equation} to conclude that there is a unique solution, $t_r\in C^{2s+\theta}(\RR^n)$, to the linear equation
$$
(-\Delta)^s t_r(x) +t_r(x) = g_r(x),\quad\forall x\in \RR^n.
$$
Then the function $w^{\chi}_r-t_r$ solves the obstacle problem
\begin{equation}
\label{eq:Obstacle_problem_w_r_cutoff}
\min\left\{(-\Delta)^s (w^{\chi}_r-t_r)+(w^{\chi}_r-t_r), (w^{\chi}_r-t_r)-\left(\frac{\psi(rx)}{d_r}\chi-t_r\right)\right\}=0,\quad\hbox{on } \RR^n.
\end{equation}
%TODO: Add more explanation.
Since we assume that $u\in C^{1+\alpha}(\RR^n)$ and $\varphi\in C^{2s+\alpha}(\RR^n)$, for some $\alpha \in ((2s-1)\vee 1/2,s)$, we see that the function $w^{\chi}_r-t_r$ belongs to $C^{1+\gamma}(\RR^n)$, for some $\gamma>2s-1$. It follows from Proposition \ref{prop:Uniqueness} that the function $w^{\chi}_r-t_r \in C^{1+\gamma}(\RR^n)$ is the unique solution to the obstacle problem \eqref{eq:Obstacle_problem_w_r_cutoff}. From the proof of Proposition \ref{prop:Existence_Holder_obstacle_problem}, we see from estimate \eqref{eq:u_eps_Holder_estimate}, that for all $\beta\in (0,2s-1)$, there is a positive constant $C=C(\beta)$, such that the function $w^{\chi}_r-t_r$ satisfies the Schauder estimate,
\begin{equation*}
\begin{aligned}
\|w^{\chi}_r-t_r\|_{C^{1+\beta}(\RR^n)} &\leq C\left(\|g_r\|_{C(\RR^n)}+\left\|(-\Delta)^s\left(\frac{\psi(rx)}{d_r}\chi\right)+\frac{\psi(rx)}{d_r}\chi\right\|_{C(\RR^n)}\right.\\
&\quad\left.+\left\|\frac{\psi(rx)}{d_r}\chi-t_r\right\|_{C(\RR^n)}\right),\quad\forall r>0.
\end{aligned}
\end{equation*}
By \cite[Proposition 2.9]{Silvestre_2007} and Lemma \ref{lem:Sup_estimate}, it follows that
\begin{equation*}
\|t_r\|_{C^{1+\beta}(\RR^n)} \leq C\|g_r\|_{C(\RR^n)},\quad\forall r>0,
\end{equation*}
and so, the function $w^{\chi}_r$ satisfies the Schauder estimate \eqref{eq:Schauder_estimate_w_r_cutoff}. In Step \ref{step:Holder_continuity}, we use estimate \eqref{eq:Schauder_estimate_w_r_cutoff} to obtain a uniform Schauder estimate on $\|w_r\|_{C^{1+\beta}(B'_{1/2})}$, for all $r>0$ sufficiently small.
\end{step}

\begin{step}[H\"older continuity]
\label{step:Holder_continuity}
Our goal is now to use estimate \eqref{eq:Schauder_estimate_w_r_cutoff} and prove that \eqref{eq:Uniform_boundedness_C_1_alpha_n_ball_rescalings} holds. For this purpose we employ the iteration argument used to prove \cite[Theorem 8.11.1]{Krylov_LecturesHolder}. For all $k \in \NN$, we let
$$
r_k=\frac{1}{4}\sum_{i=0}^k \frac{1}{2^i},\quad\forall k\in\NN,
$$
and we denote $B'_k:=B'_{r_k}$, for brevity. We now let $\chi_k:\RR^n\rightarrow[0,1]$ be a smooth function such that 
$$
\chi_k \equiv 1,\quad\hbox{ on } B'_k,\quad\hbox{and}\quad \chi_k\equiv 0, \quad\hbox{ on } (B'_{k+1})^c.
$$
In addition, we can choose the cut-off functions $\chi_k$, such that there is a positive constant, $C$, satisfying the property, 
\begin{equation}
\label{eq:Fractional_laplacian_cutoff}
\|(-\Delta)^s\chi_k\|_{C(\RR^n)} \leq C 2^{2k},\quad\forall k \in \NN.
\end{equation}
We denote
$$
\alpha_k:=\|v_r\chi_k\|_{C^{1+\beta}(\RR^n)}.
$$
We apply estimate \eqref{eq:Schauder_estimate_w_r_cutoff} to the function $w^{\chi}_r$, where we recall that $w^{\chi}_r=\chi w_r$ by \eqref{eq:Definition_w_r_cutoff}, and we choose $\chi=\chi_k$. We denote 
\begin{equation}
\label{eq:Definition_f_r}
f^k_r(x):=\int_{\RR^n}\frac{(\chi_k(x)-\chi_k(y))(w_r(x)-w_r(y))}{|x-y|^{n+2s}}\, dy,\quad\forall x \in \RR^n,
\end{equation}
for all $r>0$ and $k \in \NN$. Using definition \eqref{eq:Definition_g_r} of the function $g_r$, estimate \eqref{eq:Schauder_estimate_w_r_cutoff} yields
\begin{align*}
\alpha_k &\leq C\left(\|w_r\chi_k\|_{C(\RR^n)} + \|h_r\chi_k\|_{C(\RR^n)} + \frac{1}{d_r}\|\psi(rx)\chi_k\|_{C^{1+\beta}(\RR^n)}\right.\\
&\quad\left. +\left\|(-\Delta)^s\left(\frac{\psi(rx)}{d_r}\chi_k\right)\right\|_{C(\RR^n)}
+\|w_r(-\Delta)^s\chi_k - f^k_r\|_{C(\RR^n)} \right).
\end{align*}
By Lemma \ref{lem:Uniform_boundedness_rescalings}, definitions \eqref{eq:Definition_v_r_modified} of $w_r$ and \eqref{eq:Definition_psi_w_r} of $\psi$, and estimate \eqref{eq:Uniform_bound_h_r_w_r} of $h_r$, we can find positive constants, $C$ and $r_0$ such that
\begin{align*}
\|w_r\chi_k\|_{C(\RR^n)} + \|h_r\chi_k\|_{C(\RR^n)} \leq C,\quad\forall r\in(0,r_0),\ k \in \NN,
\end{align*}
and, using the properties of the cutoff functions $\chi_k$, we obtain
$$
 \frac{1}{d_r}\|\psi(rx)\chi_k\|_{C^{1+\beta}(\RR^n)}+\left\|(-\Delta)^s\left(\frac{\psi(rx)}{d_r}\chi_k\right)\right\|_{C(\RR^n)} \leq C2^{2k},\quad\forall k \in \NN.
$$
It follows that 
\begin{align}
\label{eq:Estimate_alpha_k_1}
\alpha_k &\leq C\left(2^{2k} + \|w_r(-\Delta)^s\chi_k -f^k_r\|_{C(\RR^n)} \right),\quad\forall r\in (0,r_0),\ k\in\NN.
\end{align}
To evaluate the last term in the preceding inequality, we consider two cases depending on whether the point $x$ belongs to $\hbox{supp }\chi_k$ or $x$ belongs to $(\hbox{supp }\chi_k)^c$.

\begin{case}[Points $x\in \hbox{supp }\chi_k$]
\label{case:Estimate_alpha_k_x_less_1}
From definition \eqref{eq:Definition_f_r} of the function $f^k_r$, we obtain that there is a positive constant, $C$, such that
$$
|f^k_r(x)| \leq C 2^{k}\left(1+\|\nabla v_r\|_{C(B'_{k+1})}\right),\quad\forall k \in \NN.
$$
Thus, in the case when $x$ is contained in $\hbox{supp }\chi_k$, the preceding inequality together with estimates \eqref{eq:Uniform_boundedness_rescalings} and  \eqref{eq:Fractional_laplacian_cutoff}, give us
\begin{align*}
|w_r(x)(-\Delta)^s\chi_k(x) -f^k_r(x)|\leq C 2^{2k}(1+\|\nabla v_r\|_{C(B'_{k+1})}),\quad\forall k \in \NN.
\end{align*}
\end{case}

\begin{case}[Points $x\notin \hbox{supp }\chi_k$]
\label{case:Estimate_alpha_k_x_greater_1}
From definition \eqref{eq:Definition_f_r} of $f_r$, we see that
\begin{align*}
w_r(x)(-\Delta)^s\chi_k(x) -f^k_r(x) = \int_{\RR^n}\frac{w_r(y)(\chi_k(x)-\chi_k(y))}{|x-y|^{n+2s}}\, dy.
\end{align*}
Because we assume that $x$ does not belong to $\hbox{supp }\chi_k$, we have that $\chi_k(x)=0$ and $\nabla\chi_k(x)=0$, and so
\begin{align*}
w_r(x)(-\Delta)^s\chi_k(x) -f^k_r(x) = -\int_{B'_1}\frac{w_r(y)(\chi_k(y)-\chi_k(x)-\nabla\chi_k(x)\dotprod(y-x))}{|x-y|^{n+2s}}\, dy.
\end{align*}
By applying Lemma \ref{lem:Uniform_boundedness_rescalings}, we obtain that there are positive constants, $C$ and $r_0$, such that
\begin{align*}
|w_r(x)(-\Delta)^s\chi_k(x) -f^k_r(x)| &\leq C2^{2k},\quad\forall k \in \NN,\quad\forall r\in (0,r_0).
\end{align*}
\end{case}

Combining Cases \ref{case:Estimate_alpha_k_x_less_1} and \ref{case:Estimate_alpha_k_x_greater_1}, we obtain that
\begin{align*}
\|w_r(-\Delta)^s\chi_k -f^k_r\|_{C(\RR^n)}\leq C 2^{2k}(1+\|\nabla v_r\|_{C(B'_{k+1})}),\quad\forall k \in \NN,\quad\forall r\in (0,r_0),
\end{align*}
and so, from inequality \eqref{eq:Estimate_alpha_k_1}, it follows that
\begin{align}
\label{eq:Estimate_alpha_k_2}
\alpha_k &\leq C 2^{2k}\left(1+\|\nabla v_r\|_{C(B'_{k+1})}\right),\quad\forall k\in\NN,\quad\forall r\in (0,r_0).
\end{align}
Applying the Interpolation inequalities \cite[Theorems 3.2.1 \& 8.8.1]{Krylov_LecturesHolder}, we have that there are positive constants, $C$ and $m$, such that for all $\eps>0$, 
$$
\|\nabla v_r\|_{C(B'_{k+1})} \leq \eps \alpha_{k+2} + C\eps^{-m} \|v_r\|_{C(B'_1)},\quad\forall k\in\NN.
$$
Estimate \eqref{eq:Estimate_alpha_k_2} together with the preceding inequality and Lemma \ref{lem:Uniform_boundedness_rescalings}, give us that
\begin{align*}
\alpha_k &\leq C 2^{2k}(\eps^{-m}+\eps \alpha_{k+2}),\quad\forall k\in\NN.
\end{align*}
Because $\eps>0$ is arbitrarily chosen, we redefine it to be $\eps^2 C^{-1}2^{-2k}$. Then the preceding inequality becomes
\begin{align*}
\alpha_k &\leq \eps^2\alpha_{k+2} + C \eps^{-2m} 2^{2(m+1)k},\quad\forall k\in\NN.
\end{align*}
We multiply the inequality by $\eps^k$, and we obtain
\begin{align*}
\eps^k\alpha_k &\leq \eps^{k+2}\alpha_{k+2} + C \eps^{-2m+k} 2^{2(m+1)k},\quad\forall k\in\NN.
\end{align*}
By choosing $\eps \in (0,1)$ sufficiently small such that
$$
\sum_{k=0}^{\infty} \eps^{-2m+k} 2^{2(m+1)k} <\infty,
$$
we obtain that
\begin{align*}
\sum_{k=0}^{\infty}\eps^k\alpha_k &\leq \sum_{k=0}^{\infty}\eps^{k+2}\alpha_{k+2} + C, 
\end{align*}
and so, it follows that $\alpha_0 \leq C$. Estimate \eqref{eq:Uniform_boundedness_C_1_alpha_n_ball_rescalings} now follows.
\end{step}
This concludes the proof.
\end{proof}

The uniform Schauder estimate \eqref{eq:Uniform_boundedness_C_1_alpha_n_ball_rescalings} on $B'_{1/4}$ can now be used to obtain a uniform Schauder estimate in $B^+_{1/8}$ of the rescaling sequence. We have the following consequence of Lemma \ref{lem:Uniform_boundedness_C_1_alpha_n_ball_rescalings}. 

\begin{lem}[Uniform Schauder estimates on $B^+_{1/8}$]
\label{lem:Uniform_boundedness_C_1_alpha_n_plus_1_ball_rescalings}
Assume that the hypotheses of Lemma \ref{lem:Uniform_boundedness_C_1_alpha_n_ball_rescalings} hold. Then there are positive constants, $C$, $\gamma\in (0,1)$ and $r_0$, such that
\begin{equation}
\label{eq:Uniform_boundedness_C_1_alpha_n_plus_1_ball_rescalings}
\begin{aligned}
\|v_r\|_{C^{\gamma}(\bar B^+_{1/8})} &\leq C,\\
\|\partial_{x_i} v_r\|_{C^{\gamma}(\bar B^+_{1/8})} &\leq C,\quad\forall i=1,\ldots,n,\\
\||y|^a\partial_y v_r\|_{C^{\gamma}(\bar B^+_{1/8})} &\leq C,
\end{aligned}
\end{equation}
for all $r\in (0,r_0)$.
\end{lem}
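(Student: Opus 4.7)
The plan is to derive the three estimates in \eqref{eq:Uniform_boundedness_C_1_alpha_n_plus_1_ball_rescalings} from boundary Schauder theory for the degenerate elliptic operator $L_a$ applied to the rescalings $v_r$, with three inputs already in hand: $v_r$ is $L_a$-harmonic on the open half-ball $B^+_{1/2}$ by \eqref{eq:Properties_v_1} and the scale invariance of $L_a$; the family $\{v_r\}$ is uniformly bounded on $B_{1/2}$ by Lemma \ref{lem:Uniform_boundedness_rescalings}; and the traces $v_r(\cdot,0)$ are uniformly $C^{1+\beta}$ on $B'_{1/4}$ for some $\beta\in (0,2s-1)$ by Lemma \ref{lem:Uniform_boundedness_C_1_alpha_n_ball_rescalings}. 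The whole task then reduces to the following boundary regularity statement: if $w$ is $L_a$-harmonic in $B^+_1$ with $w\in L^\infty(B^+_1)$ and $w(\cdot,0)\in C^{1+\beta}(B'_1)$, then there exists $\gamma=\gamma(n,s,\beta)\in(0,1)$ such that $w$, its tangential derivatives $\partial_{x_i}w$, and the weighted normal derivative $|y|^a\partial_y w$ all lie in $C^\gamma(\bar B^+_{1/2})$ with norms controlled by $\|w\|_{L^\infty(B^+_1)}+\|w(\cdot,0)\|_{C^{1+\beta}(B'_1)}$.

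I would establish this statement by a split-and-reflect argument. First, introduce an extension $G_r(x,y)$ of a cutoff of the Dirichlet datum $v_r(\cdot,0)$ from $B'_{1/4}$ into $B^+_{1/4}$, constructed as an $L_a$-harmonic extension via the Poisson-type kernel for $L_a$. By the boundary Schauder theory for $L_a$, $G_r$ inherits the $C^{1+\beta}$ regularity of its trace up to $B'_{1/8}$ with controlled norm, and $|y|^a\partial_y G_r$ extends Hölder continuously to $B'_{1/8}$. The remainder $\tilde v_r:=v_r-G_r$ is $L_a$-harmonic on $B^+_{1/4}$ with vanishing Dirichlet trace on $B'_{1/4}$; its odd reflection across $\{y=0\}$ is therefore $L_a$-harmonic on $B_{1/4}$ (in the distributional sense), and the interior Hölder estimates of \cite{Fabes_Kenig_Serapioni_1982a} for $A_2$-weighted operators together with the corresponding interior gradient estimates yield uniform $C^\gamma(\bar B^+_{1/8})$ bounds on $\tilde v_r$, on $\partial_{x_i}\tilde v_r$, and on $|y|^a\partial_y\tilde v_r$, the latter because the odd reflection renders $|y|^a\partial_y\tilde v_r$ even and continuous through $\{y=0\}$. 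Adding back $G_r$ via the triangle inequality then produces \eqref{eq:Uniform_boundedness_C_1_alpha_n_plus_1_ball_rescalings}.

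The main obstacle is the verification of the boundary Schauder estimate for $G_r$ under the low-regularity hypothesis that its Dirichlet datum is only $C^{1+\beta}$, with $\beta$ allowed to be arbitrarily close to $2s-1$ from below. This differs from the $C^2$-datum setting in \cite{Caffarelli_Silvestre_2007}, and a direct attempt to reduce to constant-coefficient problems via subtracting a first-order polynomial in $x$ only captures the $C^1$ part of the datum, leaving a $C^{\beta}$ remainder whose $L_a$-harmonic extension must be controlled carefully in the weighted Hölder scale. I would handle this either by invoking the refinements of the $A_2$-weighted Schauder theory adapted to $L_a$, or by bypassing the explicit construction of $G_r$ altogether and instead adapting the Krylov-type localization procedure of Lemma \ref{lem:Uniform_boundedness_C_1_alpha_n_ball_rescalings} to the half-space setting, using dyadic cutoffs that respect the degenerate $y$-direction and the weighted reflection symmetry.
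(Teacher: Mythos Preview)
Your general boundary regularity statement—that $L_a$-harmonicity together with $L^\infty$ control and $C^{1+\beta}$ Dirichlet data with $\beta<2s-1$ yields a $C^\gamma$ bound on $|y|^a\partial_y w$—is false. Take $g(x)=|x|^{1+\beta}$ near the origin and let $w$ be its $L_a$-harmonic extension to the half-space. By scaling, $w$ is homogeneous of degree $1+\beta$, hence bounded on $B^+_1$, and $w(\cdot,0)\in C^{1+\beta}$. But $|y|^a\partial_y w$ is then homogeneous of degree $\beta+a=\beta-(2s-1)<0$, and is not identically zero (else $w$ would be independent of $y$, contradicting $L_a w=0$). So $|y|^a\partial_y w$ is unbounded near the origin. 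This kills both the $G_r$-estimate in your split-and-reflect argument and any purely abstract Schauder or Krylov-type scheme that takes only the $C^{1+\beta}$ Dirichlet trace as input: the uniform Dirichlet information from Lemma~\ref{lem:Uniform_boundedness_C_1_alpha_n_ball_rescalings} sits strictly below the threshold $1+\beta=2s$ needed to control the weighted Neumann data.

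The paper avoids this obstruction by not attempting a general theorem. It treats the three estimates separately. For $v_r$ and $\partial_{x_i}v_r$ it notes that each is $L_a$-harmonic on $B^+_{1/4}$ with uniformly H\"older Dirichlet trace (from Lemma~\ref{lem:Uniform_boundedness_C_1_alpha_n_ball_rescalings}), and invokes the boundary H\"older estimate of \cite{Fabes_Kenig_Serapioni_1982a} adapted to nonzero boundary data. For the third estimate it uses the conjugate equation: $w_r:=|y|^a\partial_y v_r$ satisfies $L_{-a}w_r=0$ on $B^+_{1/4}$, and its Dirichlet trace is computed explicitly from \eqref{eq:Auxiliary_function_v} and \eqref{eq:Dirichlet_to_Neumann_map} as
\[
\lim_{y\downarrow 0} w_r(x,y)=\frac{r^{2s}}{d_r}\bigl((-\Delta)^s u(rx)-(-\Delta)^s\varphi(rx)+(-\Delta)^s\varphi(O)\bigr).
\]
The point is that this expression is built from $(-\Delta)^s u$ and $(-\Delta)^s\varphi$ with $u,\varphi\in C^{1+\alpha}$ for $\alpha>2s-1$, together with the structural fact $(-\Delta)^s u(O)=0$ (since $O\in\partial\{u=\varphi\}$); these ingredients give a direct H\"older bound on the trace of $w_r$ without passing through the insufficient $C^{1+\beta}$ control of $v_r(\cdot,0)$. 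So the missing idea in your proposal is precisely this: do not try to recover the Neumann data from the Dirichlet data of $v_r$; instead pass to the conjugate function and read off its Dirichlet data from the obstacle problem itself.
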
 

\begin{proof}
By construction of the rescaling sequence, $\{v_r\}_{r>0}$, and from Lemma \ref{lem:Uniform_boundedness_H_1_rescalings}, we know that the functions $v_r$ belongs to $H^1(B^+_{1/4},|y|^a)$ and solve the equation $L_a v_r=0$ on $B^+_{1/4}$. Moreover, by Lemma 
\ref{lem:Uniform_boundedness_C_1_alpha_n_ball_rescalings}, the function $v_r\upharpoonright_{B'_{1/4}}$ is H\"older continuous. By adapting the proof of \cite[Theorem 2.4.6]{Fabes_Kenig_Serapioni_1982a} to the case of non-zero boundary data, as in \cite[Theorem 8.27]{GilbargTrudinger}, we obtain that the function $v_r \in C^{\gamma}(\bar B^+_{1/8})$, for some positive constant, $\gamma\in (0,1)$. Moreover, because the sequence of functions $\{v_r\}_{r>0}$ satisfies the uniform Schauder estimate \eqref{eq:Uniform_boundedness_C_1_alpha_n_ball_rescalings}, we also obtain that there are positive constants, $C$ and $r_0$, such that
\begin{equation}
\label{eq:Holder_estimate_v_r}
\|v_r\|_{C^{\gamma}(\bar B^+_{1/8})} \leq C,\quad\forall r\in (0,r_0).
\end{equation}
A similar argument can be applied to the derivatives $\partial_{x_i} v_r$, for all $i=1,\ldots,n$. Notice that the function $\partial_{x_i} v_r$ solves the equation $L_a \partial_{x_i} v_r=0$ on $B^+_{1/4}$, and Lemma \ref{lem:Uniform_boundedness_C_1_alpha_n_ball_rescalings} gives that the derivative $\partial_{x_i} v_r\upharpoonright_{B'_{1/4}}$ is a H\"older continuous function. Because the derivative $\partial_{x_i} v_r\upharpoonright_{B^+_{1/4}}$ is a $L_a$-harmonic function, and the boundary condition on $B'_{1/4}$ is H\"older continuous, we can prove that $\partial_{x_i} v_r\in H^1(B^+_{1/4}, |y|^a)$. Therefore, we can conclude that
there are positive constants, $C$, $\gamma$ and $r_0$, such that
\begin{equation*}
\|\partial_{x_i} v_r\|_{C^{\gamma}(\bar B^+_{1/8})} \leq C,\quad \forall i =1,\ldots, n,\quad\forall r\in (0,r_0).
\end{equation*}
We now consider the case of the derivative $\partial_y v_r$. From \cite[\S 2.3]{Caffarelli_Silvestre_2007}, we know that the function $w_r:=|y|^a\partial_y v_r$ solves the conjugate equation, $L_{-a} w_r=0$ on $B^+_{1/4}$, where we recall that $a=1-2s$. From Lemma \ref{lem:Uniform_boundedness_H_1_rescalings}, we know that the function $w_r$ belongs to $L^2(B^+_{1/4}, |y|^{-a})$.  From definitions \eqref{eq:Auxiliary_function_v} of the auxiliary function $v$, and \eqref{eq:Rescaling} of the rescaling $v_r$, we obtain that the boundary condition is given by
$$
\lim_{y\downarrow 0} w_r(x,y)=\frac{r^{2s}}{d_r} \left((-\Delta)^s u(rx)-(-\Delta)^s \varphi(rx)+(-\Delta)^s \varphi(O)\right). 
$$
Because the function $u$ solves the obstacle problem \eqref{eq:Obstacle_problem_u} and $O\in\partial\{u=\varphi\}$, we have that $(-\Delta)^s u(O)=0$. In addition, the functions $u$ and $\varphi$ belong to $C^{1+\alpha}(B'_1)$, for all $\alpha\in (2s-1,s)$, and so, we see that there are positive constants, $\beta\in (0,1)$, $C$ and $r_0$, such that
\begin{equation*}
\|w_r\|_{C^{\beta}(B'_{1/4})} \leq C,\quad\forall r\in (0,r_0).
\end{equation*}
As in the case of the derivatives $\partial_{x_i} v_r$, we can prove that $w_r \in H^1(B^+_{1/4}, |y|^{-a})$. Then again, the argument applied in the case of the rescaling sequence $\{v_r\}_{r>0}$ to prove estimate \eqref{eq:Holder_estimate_v_r}, gives us that there are positive constants, $C$, $\gamma$ and $r_0$, such that
\begin{equation*}
\||y|^a\partial_y v_r\|_{C^{\gamma}(\bar B^+_{1/8})}=\|w_r\|_{C^{\gamma}(\bar B^+_{1/8})} \leq C,\quad\forall r\in (0,r_0).
\end{equation*}
This concludes the proof.
\end{proof}

We can now give the 
\begin{proof}[Proof of Proposition \ref{prop:Phi_at_0}]
As in the proof of \cite[Lemma 6.1]{Caffarelli_Salsa_Silvestre_2008}, we consider two cases, depending on whether condition \eqref{eq:Fraction_d_r_r_power_finite} or condition \eqref{eq:Fraction_d_r_r_power_infty} is satisfied. 

If condition \eqref{eq:Fraction_d_r_r_power_finite} holds, we can apply the same argument as in the proof of \cite[Lemma 6.1]{Caffarelli_Salsa_Silvestre_2008} to obtain that $\Phi^p_v(0+)=n+a+2(1+p)$, and so, identity \eqref{eq:Phi_at_0_p} holds.

Now assume that condition \eqref{eq:Fraction_d_r_r_power_infty} holds. Then, we may assume without loss of generality that
$$
\Phi^p_v(r) = r \frac{d}{dr} \log F_v(r),\quad\forall r \in (0,1).
$$
From Lemma \ref{lem:Uniform_boundedness_C_1_alpha_n_plus_1_ball_rescalings}, we can find a subsequence, $\{v_{r_k}\}_{k>0}$, which converges strongly in the space $H^1(B^+_{1/8},|y|^a)$ to a function $v_0\in H^1(B^+_{1/8},|y|^a)$. From the complementarity conditions \eqref{eq:Upper_bound_L_a} and \eqref{eq:Equality_L_a}, it follows that the rescalings $v_{r_k}$ satisfy 
\begin{align*}
v_{r_k} &\geq 0 \quad\hbox{on } B'_{1/8},\\
L_a v_{r_k} &=0 \quad\hbox{on } B_{1/8}\backslash B'_{1/8},\\
L_a v_{r_k} &\leq \frac{r^{1-a}}{d_r} h(rx)\cH^{n}|_{\{y=0\}} \quad\hbox{on } B_{1/8}, 
\end{align*}
where the function $h$ is defined by \eqref{eq:Definition_h}. Because we assume that $\alpha\in (0,s)$ and $p\in (s,2s-1/2)$, we can choose $\alpha$ close enough to $s$, so that we have $2s+\alpha-1-p>0$. Combining this inequality with condition \eqref{eq:Fraction_d_r_r_power_infty} and estimate \eqref{eq:Growth_h}, it follows that
$$
\frac{r^{1-a}}{d_r} h(rx) \rightarrow 0,\quad\hbox{as } r\downarrow 0,\quad\forall x \in B'_{1/8},
$$
and so, the function $v_0$ satisfies
\begin{align*}
v_0 &\geq 0 \quad\hbox{on } B'_{1/8},\\
v_0(x,y)&=v_0(x,-y) \quad\forall  (x,y)\in B_{1/8}\backslash B'_{1/8},\\
L_a v_0 &=0 \quad\hbox{on } B_{1/8}\backslash \left( B'_{1/8}\cap\{v_0=0\}\right),\\
L_a v_0 &\leq 0 \quad\hbox{on } B_{1/8}.
\end{align*}
By \cite[Lemma 6.1]{Caffarelli_Salsa_Silvestre_2008}, it follows that $\Phi_{v_0}(r) \geq n+a+2(1+s)$, where the function $\Phi_{v_0}(r)$ is defined in \cite[Formula (3.22)]{Caffarelli_Salsa_Silvestre_2008} by
$$
\Phi_{v_0}(r):=r(1+C_0r) \frac{d}{dr} \log\max\{F_{v_0}(r), r^{n+a+4}\},
$$
where $C_0$ is a positive constant. By \cite[Lemma 6.5]{Caffarelli_Salsa_Silvestre_2008}, we obtain that there are positive constants, $C$ and $r_0$, such that $F_{v_0}(r) \leq Cr^{n+a+2(1+s)}$, for all $r\in (0,r_0)$. We now consider two cases. If we have $F_{v_0}(r) \leq r^{n+a+2(1+p)}$, where we recall that we assume that $p\geq s$, then clearly we have that $\Phi^p_{v_0}(0+)=n+a+2(1+p)$, from the definition \eqref{eq:Phi} of the function $\Phi_{v_0}(r)$. If we have that 
$$
r^{n+a+2(1+p)} \leq F_{v_0}(r) \leq Cr^{n+a+2(1+s)},
$$
then it follows from definition \eqref{eq:Phi} of the function $\Phi_{v_0}(r)$, and \cite[Formula (3.22)]{Caffarelli_Salsa_Silvestre_2008} of the function $\Phi_{v_0}(r)$ that
\begin{align}
\Phi^p_{v_0}(r) &= \frac{1}{1+C_0r} \Phi_{v_0}(r)\notag\\
                \label{eq:Expansion_Phi_v_0} 
                & = 2\frac{r\int_{B_r}|\nabla v_0|^2|y|^a}{\int_{\partial B_r}|v_0|^2|y|^a} +n+a.
\end{align}
By letting $r$ tend to $0$ in the first identity from above, we obtain by Proposition \ref{prop:Monotonicity_formula} and \cite[Lemma 6.1]{Caffarelli_Salsa_Silvestre_2008} that 
\begin{equation}
\label{eq:Lower_bound_Phi_v_second_case}
\Phi^p_{v_0}(0+) = \Phi_{v_0}(0+) \geq n+a+2(1+s).
\end{equation}
From the proof of Lemma \ref{lem:Uniform_boundedness_H_1_rescalings}, identity \eqref{eq:Expansion_Phi} and inequality \eqref{eq:Inequality_second_term_Phi} imply that
$$
\Phi^p_{v}(r) =  2\frac{r\int_{B_r}|\nabla v|^2|y|^a}{\int_{\partial B_r}|v|^2|y|^a} +n+a+ O(r^{2(\alpha-p+s-1/2)}).
$$
For all $t,r>0$, we have that
\begin{align*}
\frac{rt\int_{B_{rt}}|\nabla v|^2|y|^a}{\int_{\partial B_{rt}}|v|^2|y|^a}
&= \frac{r\int_{B_r}|\nabla v_t|^2|y|^a}{\int_{\partial B_r}|v_t|^2|y|^a},
\end{align*}
from which it follows that
$$
\Phi^p_{v}(tr) =  2\frac{r\int_{B_r}|\nabla v_t|^2|y|^a}{\int_{\partial B_r}|v_t|^2|y|^a} +n+a+ O((tr)^{2(\alpha-p+s-1/2)}).
$$
By letting $t$ tend to zero, using the strong convergence of the sequence $\{v_{r_k}\}_{k>0}$ to $v_0$ in the $H^1(B^+_{1/8})$ norm, and the fact that $\alpha-p+s-1/2>0$, we obtain that
$$
\Phi^p_{v}(0+) =  2\frac{r\int_{B_r}|\nabla v_0|^2|y|^a}{\int_{\partial B_r}|v_0|^2|y|^a} +n+a,\quad\forall r\in (0,1).
$$
Identity \eqref{eq:Expansion_Phi_v_0}  gives us that 
\begin{equation}
\label{eq:Homogeneity_v_0} 
\Phi^p_{v}(0+) = \Phi^p_{v_0}(r) = \Phi^p_{v_0}(0+),\quad\forall r\in (0,1).
\end{equation}
The preceding identity together with \eqref{eq:Lower_bound_Phi_v_second_case}, gives us that \eqref{eq:Phi_at_0} holds.

This concludes the proof.
\end{proof}

\subsection{Optimal regularity of solutions}
\label{sec:Solutions_optimal_regularity}
In this section we prove the optimal regularity of solutions. In Proposition \ref{prop:Growth_v_around_0}, we prove a growth estimate of the auxiliary function $v$ in a neighborhood of a free boundary point, which we then use to give the optimal regularity of solutions in Lemma \ref{lem:Optimal_regularity}. We conclude with the proof of our main result, Theorem \ref{thm:Solutions}.

Analogous to \cite[Lemma 6.6]{Caffarelli_Salsa_Silvestre_2008}, we have the following consequence of Proposition \ref{prop:Phi_at_0}.
\begin{lem}
\label{lem:Upper_bound_F}
If $\Phi^p_v(r)\rightarrow \mu$, as $r\downarrow 0$, then there are positive constants, $C$ and $r_0$, such that
\begin{equation}
\label{eq:Upper_bound_F} 
F_v(r) \leq C r^{\mu},\quad\forall r\in (0,r_0),
\end{equation}
where the function $F_v$ is defined by \eqref{eq:F}.
\end{lem}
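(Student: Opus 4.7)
The plan is to integrate the logarithmic-derivative identity for $\Phi^p_v$ encoded in definition \eqref{eq:Phi}, after extracting a pointwise lower bound on $\Phi^p_v(r)$ from the monotonicity formula. Writing $G_v(r) := \max\{F_v(r), r^{n+a+2(1+p)}\}$, definition \eqref{eq:Phi} says $\Phi^p_v(r) = r\,(d/dr)\log G_v(r)$. Since $G_v$ is the maximum of two continuously differentiable strictly positive functions on $(0,r_0)$, it is locally absolutely continuous, so the fundamental theorem of calculus applies and the identity may be integrated over any subinterval.

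First, by Proposition \ref{prop:Monotonicity_formula} the map $r\mapsto e^{Cr^{\gamma}}\Phi^p_v(r)$ is non-decreasing on $(0,r_0)$; hence its infimum is attained in the limit $r\downarrow 0$, and this limit equals $\mu$ by hypothesis (since $e^{Cr^{\gamma}}\to 1$). Thus for all sufficiently small $r$,
\[
\Phi^p_v(r) \;\geq\; \mu\, e^{-Cr^{\gamma}} \;\geq\; \mu\bigl(1-Cr^{\gamma}\bigr).
\]

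Next, integrating this lower bound on $(r,r_0)$ yields
\[
\log G_v(r_0) - \log G_v(r) \;=\; \int_r^{r_0} \frac{\Phi^p_v(s)}{s}\, ds \;\geq\; \mu \log(r_0/r) - \frac{\mu C}{\gamma}\, r_0^{\gamma},
\]
so rearranging and exponentiating gives
\[
G_v(r) \;\leq\; G_v(r_0)\, e^{\mu C r_0^{\gamma}/\gamma}\, \left(\frac{r}{r_0}\right)^{\mu} \;=:\; C\, r^{\mu},
\]
which combined with the trivial inequality $F_v(r)\leq G_v(r)$ proves \eqref{eq:Upper_bound_F}.

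I do not anticipate a serious obstacle here; this is a standard integration of the Almgren-type monotonicity formula, directly paralleling \cite[Lemma 6.6]{Caffarelli_Salsa_Silvestre_2008}. The only point requiring attention is the absolute continuity of $\log G_v$, which is needed to pass from the almost-everywhere identity $\Phi^p_v(r)/r = (d/dr)\log G_v(r)$ to an integrated equality between two values of $r$; this is immediate once one observes that $G_v$ is the pointwise maximum of two $C^1$ positive functions.
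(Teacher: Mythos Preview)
Your proof is correct and follows exactly the approach the paper has in mind: the paper's own proof consists solely of the remark that one applies the argument of \cite[Lemma~6.6]{Caffarelli_Salsa_Silvestre_2008}, and what you wrote is precisely that argument---extract the lower bound $\Phi^p_v(r)\geq \mu e^{-Cr^\gamma}$ from the monotonicity of $e^{Cr^\gamma}\Phi^p_v(r)$, then integrate the logarithmic-derivative identity \eqref{eq:Phi} to control $G_v(r)=\max\{F_v(r),r^{n+a+2(1+p)}\}$. The only cosmetic point is that the inequality $\mu e^{-Cr^\gamma}\geq \mu(1-Cr^\gamma)$ as written tacitly uses $\mu\geq 0$, which holds here since $\Phi^p_v(r)\geq n+a>0$ (and in any case is irrelevant, as one may instead use $\mu e^{-Cs^\gamma}\geq \mu-|\mu|Cs^\gamma$).
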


\begin{proof}
To obtain estimate \eqref{eq:Upper_bound_F} we can apply exactly the same argument that was used to prove \cite[Lemma 6.6]{Caffarelli_Salsa_Silvestre_2008}.
\end{proof}

In Proposition \ref{prop:Growth_v_around_0}, we prove an estimate of the growth of the function $v$ in a neighborhood of a free boundary point. The analogue of this result can be found in \cite[Lemma 6.5]{Caffarelli_Salsa_Silvestre_2008}. While the two results  are similar in spirit, their proofs are different. The proof of \cite[Lemma 6.5]{Caffarelli_Salsa_Silvestre_2008} is based on comparison arguments, which we cannot adapt to our case because the singular measure $L_a v$ is nontrivial on the set $\{y=0\}\backslash\{v=0\}$. To overcome this difficulty, in the proof of Proposition \ref{prop:Growth_v_around_0}, we construct a suitable auxiliary function, $\psi$, to compensate the singular measure, and then we apply the Moser iterations method to obtain the growth estimate \eqref{eq:Growth_v_around_O}. 
\begin{prop}[Growth of $v$ in a neighborhood of a free boundary point]
\label{prop:Growth_v_around_0}
Let $s\in (1/2,1)$. Assume that the obstacle function, $\varphi \in C^{2s+\alpha}(\RR^n)$, and $u\in C^{1+\alpha}(\RR^n)$, for all $\alpha\in (0,s)$, and that $u$ is a solution to problem \eqref{eq:Obstacle_problem_simple}. Then there are positive constants, $C$ and $r_0$, such that
\begin{equation}
\label{eq:Growth_v_around_O}
0 \leq v(x,0) \leq C|x|^{1+s},\quad\forall x \in B'_{r_0}.
\end{equation}
\end{prop}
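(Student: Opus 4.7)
The plan is to combine the Almgren-type monotonicity formula of Proposition \ref{prop:Monotonicity_formula} with a Moser iteration, transferring the resulting $L^2$-average control on spheres into the pointwise control near the origin asserted in \eqref{eq:Growth_v_around_O}.

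First I would establish that $d_r \leq C r^{1+s}$ for all sufficiently small $r$. Applying Proposition \ref{prop:Monotonicity_formula} with $p = s$ and $\alpha \in (\max\{2s-1,1/2\}, s)$, the function $r \mapsto e^{Cr^\gamma}\Phi^s_v(r)$ is non-decreasing on $(0, r_0)$, so the limit $\mu := \Phi^s_v(0+)$ exists. Proposition \ref{prop:Phi_at_0} with $p = s$ then gives $\mu \geq n + a + 2(1+s)$ in both of its cases \eqref{eq:Fraction_d_r_r_power_finite} and \eqref{eq:Fraction_d_r_r_power_infty}, and Lemma \ref{lem:Upper_bound_F} yields $F_v(r) \leq Cr^\mu \leq Cr^{n+a+2(1+s)}$ for small $r$. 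By the definition \eqref{eq:d_r} of $d_r$, this reads $d_r \leq Cr^{1+s}$, and integrating in polar form one also obtains the solid bound $\int_{B_\rho} v^2|y|^a \leq C\rho^{n+a+1+2(1+s)}$.

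Next I would upgrade this to the pointwise bound $\sup_{B'_{\rho/2}} v \leq C \rho^{1+s}$ via a Moser iteration inside $B_\rho$, with $\rho$ small. Lemma \ref{lem:Uniform_boundedness_rescalings} cannot be quoted verbatim, since $d_r \leq C r^{1+s}$ triggers condition \eqref{eq:Fraction_d_r_r_power_finite} rather than \eqref{eq:Fraction_d_r_r_power_infty}; I would instead rerun its argument on $v$ directly, choosing the truncation parameter to match the scale $\rho^{1+s}$ rather than $r^{2s+\alpha}/d_r$. The key obstacle is the singular contribution on the right-hand side of \eqref{eq:Upper_bound_L_a}, namely $L_a v \leq h\,\cH^n|_{\{y = 0\}}$ with $|h(x)| \leq C|x|^\alpha$ by \eqref{eq:Growth_h}. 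To compensate for this, I would introduce an auxiliary function $\psi$, supported near $O$ and chosen so that $L_a(v - \psi) \leq 0$ as a distribution on $B_\rho$; for example, a suitable $L_a$-extension of $c|x|^{2s+\alpha}\eta(x)$ for a small cutoff $\eta$, scaled so that the associated fractional Laplacian dominates $h$ on $B'_\rho$ up to a correction of size $O(\rho^{2s+\alpha}) = o(\rho^{1+s})$.

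With $\psi$ in hand, the integration-by-parts step \eqref{eq:Operator_applied_to_auxiliary_function_int_by_parts} produces no positive boundary term, and the Moser iteration scheme of Lemma \ref{lem:Uniform_boundedness_rescalings}, combined with the trace inequality \cite[Theorem 1.5.1.1]{Grisvard} and the weighted Sobolev embedding of \cite[Theorem (1.6)]{Fabes_Kenig_Serapioni_1982a}, yields
\[
\sup_{B_{\rho/2}} (v - \psi)^+ \leq C \left( \rho^{-(n+a+1)} \int_{B_\rho} v^2 |y|^a \right)^{1/2} + C\rho^{1+s}.
\]
Substituting the solid bound from the first step, together with $\sup_{B_{\rho/2}}\psi = O(\rho^{2s+\alpha}) = o(\rho^{1+s})$, gives $\sup_{B_{\rho/2}} v \leq C\rho^{1+s}$, which restricted to $\{y = 0\}$ is exactly \eqref{eq:Growth_v_around_O}. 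The main difficulty I anticipate is the construction of $\psi$: it must be simultaneously large enough to absorb the singular measure $h\,\cH^n|_{\{y=0\}}$ and small enough to respect the target scale $\rho^{1+s}$, a balance which is delicate precisely because we only assume $\varphi \in C^{2s+\alpha}$ rather than $C^{2,1}$ as in \cite{Caffarelli_Salsa_Silvestre_2008}.
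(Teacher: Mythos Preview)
Your proposal is correct and follows essentially the same route as the paper: obtain $F_v(r)\leq Cr^{n+a+2(1+s)}$ from Propositions~\ref{prop:Monotonicity_formula} and~\ref{prop:Phi_at_0} together with Lemma~\ref{lem:Upper_bound_F}, build an auxiliary $\psi$ absorbing the singular measure $h\,\cH^n|_{\{y=0\}}$, and convert $L^2$ control into a pointwise bound.

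Two points of comparison. First, the paper avoids rerunning a Moser iteration: once one shows that $v^+-\psi$ is $L_a$-subharmonic on $B_1$ (your ``$L_a(v-\psi)\le 0$''), the local maximum principle of \cite[Theorem~2.3.1]{Fabes_Kenig_Serapioni_1982a} gives directly
\[
\sup_{B_r}(v^+-\psi)\le C\Bigl(r^{-(n+1+a)}\int_{B_{2r}}(|v|^2+|\psi|^2)|y|^a\Bigr)^{1/2},
\]
so there is no need to revisit the trace/interpolation machinery of Lemma~\ref{lem:Uniform_boundedness_rescalings}. Second, the paper's $\psi$ is not the $L_a$-extension of $c|x|^{2s+\alpha}\eta(x)$ but rather the $L_a$-extension of the Riesz potential of $C_0|x|^\alpha\varphi(|x|)$ (minus its value at $O$), so that $(-\Delta)^s\psi(x,0)=C_0|x|^\alpha\varphi(|x|)$ exactly on $\RR^n$. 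The delicate part you anticipated is handled in a separate claim showing $|\psi(x,y)|\le C|(x,y)|^{1+s}$ on $B_1$, treating the $x$- and $y$-directions separately via the Poisson representation; this is precisely the balance you identified as the main difficulty.
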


\begin{proof}
We divide the proof into several steps. 

\setcounter{step}{0}
\begin{step}[Inequality satisfied by $v^+$]
\label{step:Inequality_v_+}
We want to show that there is a positive constant, $C_0$, such that the function $v^+$ satisfies the inequality
\begin{equation}
\label{eq:Inequality_v_+}
\int_{B_1} \nabla v^+\nabla\eta|y|^a \leq  C_0 \int_{B_1'}  |x|^{\alpha}\eta(x),
\end{equation}
for all nonnegative test functions, $\eta\in H^1_0(B_1, |y|^a)$. Because we assume that $s\in (1/2,1)$, we have that $a=1-2s \in (-1,0)$. Then any function in $H^1_0(B_1,|y|^a)$ is also contained in $H^1_0(B_1)$, and by \cite[Theorem 5.5.1]{Evans}, it has a well-defined trace on $B'_1$ and on $\partial B_1$ in the $L^2(B'_1)$ and $L^2(\partial B_1)$ sense, respectively. The space $H^1_0(B_1,|y|^a)$ is the closure of the space of smooth functions $C^{\infty}_c(B_1)$ with respect to the norm $\|\cdot\|_{H^1(B_1,|y|^a)}$.

Let $\eps>0$ and let $\phi_{\eps}:\RR\rightarrow [0,1]$ be a smooth function satisfying the properties
\begin{equation}
\label{eq:Properties_psi_eps}
\phi_{\eps}' \geq 0,\quad \phi_{\eps}(t)=0\quad\hbox{ for } t<\eps,\quad \phi_{\eps}(t)=1\quad\hbox{ for } t>2\eps.
\end{equation}
Let $\eta\in H^1_0(B_1,|y|^a)$ be a nonnegative function. Integrating the singular measure $-L_a v$ against the test function $\phi_{\eps}(v^+)\eta$, and using identity \eqref{eq:Equality_L_a}, we obtain 
\begin{align*}
\int_{B_1} \nabla v\nabla (\phi_{\eps}(v^+)\eta) |y|^a = 2\int_{B_1'} \left((-\Delta)^su(x)+(-\Delta)^s\varphi(O)-(-\Delta)^s\varphi(x)\right) \phi_{\eps}(v^+)\eta.
\end{align*}
Because $(-\Delta)^s u(x)=0$, for all $x\in\{u>\varphi\}\cap B'_1$, that is, for all $x\in\{v>0\}\cap B'_1$, we see from properties \eqref{eq:Properties_psi_eps} of the function $\phi_{\eps}$, that the preceding identity becomes
\begin{align*}
\int_{B_1} \nabla v\nabla \eta \phi_{\eps}(v^+)|y|^a + \int_{B_1} \nabla v \nabla v^+\phi'_{\eps}(v^+)\eta |y|^a = 2\int_{B_1'} \left((-\Delta)^s\varphi(O)-(-\Delta)^s\varphi(x)\right) \phi_{\eps}(v^+)\eta.
\end{align*}
Using again properties \eqref{eq:Properties_psi_eps} of the functions $\phi_{\eps}$, and the fact that the function $\eta$ is assumed to be nonnegative, we obtain that 
$$
\int_{B_1} \nabla v \nabla v^+\phi'_{\eps}(v^+)\eta |y|^a \geq 0,
$$
and so, by letting $\eps$ tend to zero, the preceding two formulas give us 
$$
\int_{B_1} \nabla v^+\nabla\eta|y|^a \leq \int_{B_1'} h(x)\eta(x),
$$
for all nonnegative test functions, $\eta\in H^1_0(B_1,|y|^a)$, where the function $h$ is defined in \eqref{eq:Definition_h}. The preceding inequality and \eqref{eq:Growth_h} imply \eqref{eq:Inequality_v_+}. 
\end{step}

\begin{step}[Construction of an auxiliary function]
\label{step:Construction_auxiliary_function_v_+}
We want to build an auxiliary function, $\psi$, to compensate the term appearing on the right-hand side of inequality \eqref{eq:Inequality_v_+}. Our goal in this step is to prove the existence of a function, $\psi \in H^1(B_1, |y|^a)$, such that there is a positive constant, $C_1$, with the properties
\begin{align}
\label{eq:Equation_extension_psi}
L_a\psi(x,y) &= 0,&\quad\forall (x,y)\in \RR^{n+1}\backslash\{y=0\},\\
\label{eq:Fractional_laplacian_psi}
(-\Delta)^s\psi(x,0) &=C_0|x|^{\alpha}\varphi(|x|),&\quad\forall x\in \RR^n,\\
\label{eq:Growth_psi_n+1_dim}
|\psi(x,y)| &\leq C_1 |(x,y)|^{1+s},&\quad\forall (x,y)\in B_1,
\end{align}
where the smooth function with compact support $\varphi:[0,\infty)\rightarrow [0,1]$ is chosen such that $\varphi(t)=1$, for all $t \in (0,1)$. We begin our construction by defining the function $\psi_1$ by (see \cite[p. 76]{Silvestre_2007})
$$
\psi_1(x):= c_{n,-s}\int_{\RR^n}\frac{C_0|z|^{\alpha}\varphi(|z|)}{|x-z|^{n-2s}}\, dz,\quad\forall x\in\RR^n,
$$
and we let $\psi_2(x):=\psi_1(x)-\psi_1(O)$, for all $x \in \RR^n$. Then the function $\psi_2$ is a solution to equation \eqref{eq:Fractional_laplacian_psi}, and by \cite[Proposition 2.8]{Silvestre_2007}, we also have that the function $\psi_2$ belongs to $C^{2s+\alpha}(\RR^n)$, since $|x|^{\alpha}\varphi(|x|)$ is contained in $C^{\alpha}(\RR^n)$. We note that we have $\psi_2(O)=0$, and because $\psi_2$ is a radial function, the gradient $\nabla \psi_2(O)=0$. We see that there is a function, $\psi_0:[0,\infty)\rightarrow\RR$, which belongs to $C^{2s+\alpha}(\bar\RR_+)$ such that $\psi_2(x)=\psi_0(|x|)$, for all $x\in\RR^n$, and $\psi_0(0)=0$ and $\psi'(0)=0$.

 We extend the function $\psi_2$ from $\RR^n$ to $\RR^n\times\RR_+$ such that $\psi_2$ satisfies equation \eqref{eq:Equation_extension_psi}. We let $\psi$ denote the $L_a$-harmonic extension of the function $\psi_2$ from $\RR^n$ to $\RR^n\times\RR_+$. Using the Poisson formula \cite[\S 2.4]{Caffarelli_Silvestre_2007}, the function $\psi$ can be constructed by setting
\begin{equation}
\label{eq:Definition_psi}
\psi(x,y):=\int_{\RR^n} P(z,y) \psi_0(|x-z|) \, dz,\quad\forall (x,y)\in\RR^{n}\times\RR_+,
\end{equation}
where we recall from \cite[Formula (2.3)]{Caffarelli_Silvestre_2007} that the Poisson kernel, $P(x,y)$, is defined by 
\begin{equation}
\label{eq:Poisson_kernel}
P(x,y)= C_{n,s} \frac{y^{2s}}{\left(|x|^2+y^2\right)^{(n+2s)/2}},\quad\forall (x,y)\in \RR^n\times\RR_+,
\end{equation}
where $C_{n,s}$ is a positive constant depending only on $n$ ans $s$. We extend $\psi$ from $\RR^n\times\RR_+$ to $\RR^{n+1}$ by even reflection with respect to the hyperplane $\{y=0\}$. Then the function $\psi$ satisfies conditions \eqref{eq:Equation_extension_psi} and \eqref{eq:Fractional_laplacian_psi}, and it remains to show that $\psi$ satisfies the growth condition \eqref{eq:Growth_psi_n+1_dim}. For this purpose, we prove the following
\begin{claim}[Growth of $\psi$ in the $x$ and $y$ directions]
\label{claim:Growth_psi}
There is a positive constant, $C$, such that
\begin{align}
\label{eq:Growth_x_direction}
|\psi(x,y)-\psi(0,y)| &\leq C|x|^{1+s},\quad\forall (x,y), (0,y) \in B_1,\\
\label{eq:Growth_y_direction}
|\psi(0,y)| &\leq C|y|^{1+s},\quad\forall (0,y)\in B_1.
\end{align}
\end{claim}

\begin{proof}
We prove each inequality in the following two steps.
\setcounter{step}{0}
\begin{step}[Proof of inequality \eqref{eq:Growth_x_direction}]
Taking derivative in the $x_i$ variable in \eqref{eq:Definition_psi}, we obtain
\begin{align*}
\psi_{x_i}(0,y)&=\int_{\RR^n} P(z,y) \psi_0'(|z|) \frac{z_i}{|z|}\, dz,\quad\forall y>0,\quad \forall i=1,\ldots,n.
\end{align*}
Because the function $\psi_0$ and the Poisson kernel $P(\cdot,y)$ are radial functions, we see that $\psi_{x_i}(0,y) =0$, for all $y>0$, and all $i=1,\ldots,n$, which implies that there is a positive constant, $C$, such that
\begin{align*}
|\psi_{x_i}(x,y)| &= |\psi_{x_i}(x,y)- \psi_{x_i}(0,y)|\\
&\leq\int_{\RR^n} P(z,y)|\partial_{x_i}\psi_0(|x-z|)-\partial_{x_i}\psi_0(|z|)|\, dz\\
&\leq\int_{\RR^n} P(z,y)|x|^{2s+\alpha-1}\, dz\\
&\leq |x|^{2s+\alpha-1},\quad\forall (x,y)\in\RR^{n}\times(0,\infty).
\end{align*}
where in the last two inequalities we used the fact that the function $\psi_0$ belongs to $C^{2s+\alpha}(\bar\RR_+)$, and that the Poisson kernel, $P(\cdot,y)$ is a probability density. The preceding inequality immediately implies \eqref{eq:Growth_x_direction}, since we may choose $\alpha>1-s$. 
\end{step}

\begin{step}[Proof of inequality \eqref{eq:Growth_y_direction}]
From the fact that $\psi_0(0)=0$, the definition \eqref{eq:Definition_psi} of the function $\psi$, and \eqref{eq:Poisson_kernel} of the Poisson kernel, $P(x,y)$, we obtain
$$
\frac{\psi(0,y)}{y^{1+s}} = -C_{n,s} y^{s-1}\int_{\RR^n} \frac{|z|^{n+2s}}{\left(|z|^2+y^2\right)^{(n+2s)/2}} \frac{\psi_0(0)-\psi_0(|z|)}{|z|^{n+2s}}\, dz.
$$
Identity \eqref{eq:Fractional_laplacian_psi} evaluated at $x=0$, gives that
$$
\int_{\RR^n} \frac{\psi_0(0)-\psi_0(|z|)}{|z|^{n+2s}}\, dz=0,
$$
from where it follows that
$$
\frac{\psi(0,y)}{y^{1+s}} = C_{n,s} y^{s-1}\int_{\RR^n}\left(1- \frac{|z|^{n+2s}}{\left(|z|^2+y^2\right)^{(n+2s)/2}}\right) \frac{\psi_0(0)-\psi_0(|z|)}{|z|^{n+2s}}\, dz.
$$
To estimate the preceding integral, we split it into the integrals over the sets $\{|z|<2y\}$, $\{2y\leq |z|<1\}$, and $\{1\leq |z|\}$, and we denote them by $I_i$, for $i=1,2,3$, respectively. We estimate each of these integrals separately. To estimate the integral $I_1$, we use the fact that $\psi_0\in C^{2s+\alpha}(\bar\RR_+)$, and that $\psi_0(0)=0$ and $\nabla\psi_0(0)=0$. We can find a positive constant, $C$, such that
$$
|I_1| \leq C y^{s-1} \int_0^{2y} \frac{t^{2s+\alpha}t^{n-1}}{y^{n+2s}}\, dt\leq C y^{s+\alpha-1}.
$$
Because we assume that $\alpha\in (1-s,s)$, we see that the right-hand side in the preceding identity in bounded, for all $y\in (0,1)$.

To estimate the integrals $I_2$ and $I_3$, we can use the Taylor series expansion of the function $(1+y^2/|z|^2)^{-(n+2s)/2}$, since $y/|z| <1$. We have that there is a positive constant, $C$, such that 
\begin{equation}
\label{eq:Taylor_approximation}
\left|1-\frac{|z|^{n+2s}}{\left(|z|^2+y^2\right)^{(n+2s)/2}}\right| \leq C\frac{y^2}{|z|^2},\quad\forall |z|\geq 2y.
\end{equation}
We use the preceding inequality to estimate $I_2$, together with the fact that $\psi_0\in C^{2s+\alpha}(\bar\RR_+)$, and that $\psi_0(0)=0$ and $\nabla\psi_0(0)=0$, and we obtain
$$
|I_2| \leq Cy^{s-1}\int_{2y}^1 \frac{y^2}{t^2} \frac{t^{2s+\alpha} t^{n-1}}{t^{n+2s}}\, dt \leq C y^{1+s}(y^{\alpha-2}-1) \leq C y^{s+\alpha-1}.
$$
As in the preceding case, the right-hand side is bounded, for all $y\in (0,1)$.

To estimate the integral $I_3$, we use the Taylor approximation \eqref{eq:Taylor_approximation} and the fact that the function $\psi_0$ is bounded. Then there is a positive constant, $C$, such that
$$
|I_3| \leq C y^{s-1} \int_{1}^{\infty} \frac{y^2}{t^2} \frac{t^{n-1}}{t^{n+2s}}\, dt \leq C y^{1+s},
$$
since the function $t\mapsto t^{-(3+2s)}$ is integrable on $(1,\infty)$. We obtain that the integral $I_3$ is also bounded, for all $y\in (0,1)$. Thus inequality \eqref{eq:Growth_y_direction} now follows.

\end{step}
This completes the proof of Claim \ref{claim:Growth_psi}.
\end{proof}
Inequalities \eqref{eq:Growth_x_direction} and \eqref{eq:Growth_y_direction} together with the fact that $\psi(O)=0$ imply that the function $\psi$ satisfies condition \eqref{eq:Growth_psi_n+1_dim}.

In conclusion, we constructed a function $\psi\in H^1(B_1, |y|^a)$ which verifies conditions \eqref{eq:Equation_extension_psi}, \eqref{eq:Fractional_laplacian_psi} and \eqref{eq:Growth_psi_n+1_dim}.
\end{step}

\begin{step}[Equality satisfied by $\psi$]
\label{step:Inequality_psi}
We now want to show that the function $\psi$ satisfies the equality
\begin{equation}
\label{eq:Inequality_psi}
\int_{B_1} \nabla \psi\nabla\eta|y|^a = 2C_0 \int_{B_1'} |x|^{\alpha}\eta(x),
\end{equation}
for all test functions, $\eta\in H^1_0(B_1,|y|^a)$. Using identity \eqref{eq:Dirichlet_to_Neumann_map}, together with \eqref{eq:Equation_extension_psi} and \eqref{eq:Fractional_laplacian_psi}, we obtain by integration by parts that
\begin{align*}
\int_{B_1} \nabla \psi\nabla\eta |y|^a &= 2\int_{B_1'} (-\Delta)^s\psi(x,0) \eta\\
&= 2C_0\int_{B_1'} |x|^{\alpha} \varphi(|x|)\eta.
\end{align*}
Because the function $\varphi$ was chosen such that $\varphi(t)=1$, for all $t\in[0,1)$, inequality \eqref{eq:Inequality_psi} now follows.
\end{step}

\begin{step}[Proof of estimate \eqref{eq:Growth_v_around_O}]
We can now prove estimate \eqref{eq:Growth_v_around_O}. Let $\eta\in H^1_0(B_1,|y|^a)$ be a nonnegative test function. From inequalities \eqref{eq:Inequality_v_+} and \eqref{eq:Inequality_psi}, we obtain that
$$
\int_{B_1} \nabla \left(v^+-\psi\right)\nabla\eta|y|^a \leq 0.
$$
Thus the function $v^+-\psi$ is $L_a$-subharmonic on $B_1$, and we can apply \cite[Theorem 2.3.1]{Fabes_Kenig_Serapioni_1982a} to obtain that there is a positive constant, $C$, such that
\begin{equation}
\label{eq:Supremum_estimate_difference_1}
\sup_{B_r} (v^+-\psi) \leq C \left(\frac{1}{r^{n+1+a}}\int_{B_{2r}}\left(|v^+|^2+|\psi|^2\right)|y|^a\right)^{1/2}, \quad\forall r\in (0,1/2).
\end{equation}
Recall that we may apply \cite[Theorem 2.3.1]{Fabes_Kenig_Serapioni_1982a} because the weight $\fw(x,y)=|y|^a$ belongs to the Muckenhoupt $A_2$ class of functions, and the analogue of the standard Sobolev inequality \cite[Inequality (7.26)]{GilbargTrudinger} in the case of our weighted space, $H^1(B_1,|y|^a)$, is proved in \cite[Theorem (1.6)]{Fabes_Kenig_Serapioni_1982a}.

Using definition \eqref{eq:F} of the function $F$, we can rewrite inequality \eqref{eq:Supremum_estimate_difference_1} in the form
\begin{align*}
\sup_{B_r} (v^+-\psi)
&\leq C r^{-(n+1+a)/2} \left(\left(\int_0^{2r} F_v(t)\, dt\right)^{1/2} + \left(\int_0^{2r} F_{\psi}(t)\, dt\right)^{1/2} \right).
\end{align*}
From Proposition \ref{prop:Phi_at_0}, we may apply inequality \eqref{eq:Upper_bound_F} with $\mu=n+a+2(1+s)$, and we obtain that there are positive constants, $C_1$ and $r_0$, such that
$$
F_v(r) \leq C_1 r^{n+a+2(1+s)},\quad\forall r\in (0,r_0),
$$
while inequality \eqref{eq:Growth_psi_n+1_dim} gives us that there is a positive constant, $C_2$, such that
$$
F_{\psi}(r) \leq C_2 r^{n+a+2(1+s)},\quad\forall r\in (0,1).
$$
Combining now the last three inequalities, and using the fact that $v \geq 0$ on $\RR^n\times\{0\}$, we find a positive constant, $C_3$, such that 
\begin{align*}
\sup_{B'_r} (v-\psi) \leq C_3 r^{1+s},\quad\forall r\in(0,r_0). 
\end{align*}
The preceding inequality together with \eqref{eq:Growth_psi_n+1_dim} and the fact that 
$$
\sup_{B'_r} v \leq \sup_{B'_r} (v-\psi) + \sup_{B'_r} \psi,
$$
imply that inequality \eqref{eq:Growth_v_around_O} holds.
\end{step}
This concludes the proof of inequality \eqref{eq:Growth_v_around_O}.
\end{proof}

We next have the following analogue of \cite[Corollary 6.8]{Caffarelli_Salsa_Silvestre_2008}.
\begin{lem}
\label{lem:Optimal_regularity}
Let $s\in (1/2,1)$. Assume that the obstacle function, $\varphi \in C^{2s+\alpha}(\RR^n)$, and $u\in C^{1+\alpha}(\RR^n)$, for all $\alpha\in (0,s)$, and that $u$ is a solution to problem \eqref{eq:Obstacle_problem_simple}. Then the function $v\in C^{1+s}(\RR^n)$, where $v$ is defined in \eqref{eq:Auxiliary_function_v}.
\end{lem}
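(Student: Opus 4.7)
The plan is to deduce Lemma \ref{lem:Optimal_regularity} from Proposition \ref{prop:Growth_v_around_0} in the spirit of \cite[Corollary 6.8]{Caffarelli_Salsa_Silvestre_2008}. Restricted to $\{y=0\}$, the function $v$ coincides with $u-\varphi$ (the correction $\frac{1}{2s}(-\Delta)^s\varphi(O)|y|^{1-a}$ vanishes there), which by Proposition \ref{prop:Solutions_partial_regularity} lies in $C^{1+\alpha}(\RR^n)$ for each $\alpha\in(0,s)$, is nonnegative, and vanishes identically on $\Lambda:=\{u=\varphi\}$. Hence at every free boundary point $x_0\in\partial\Lambda$ both $v(x_0,0)=0$ and $\nabla_x v(x_0,0)=0$, and after translating $x_0$ to the origin Proposition \ref{prop:Growth_v_around_0} yields the pointwise bound $0\le v(x,0)\le C|x-x_0|^{1+s}$ on a fixed-size neighborhood of each $x_0$, uniformly in $x_0$.

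I would then verify the Campanato-type pointwise criterion: there is $C>0$ such that, for every $z\in\RR^n$, the affine polynomial $P_z(x):=v(z,0)+\nabla_x v(z,0)\cdot(x-z)$ obeys $|v(x,0)-P_z(x)|\le C|x-z|^{1+s}$ on a neighborhood of $z$ of uniform size. For $z\in\Lambda$ one has $P_z\equiv 0$ and the bound is either trivial (when $z$ is interior to $\Lambda$) or is precisely Proposition \ref{prop:Growth_v_around_0} (when $z\in\partial\Lambda$). For $z\in\{u>\varphi\}$, set $d:=\dist(z,\Lambda)$ and let $y_0\in\partial\Lambda$ be a nearest point. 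For $|x-z|\ge d/2$, the triangle inequality combined with Proposition \ref{prop:Growth_v_around_0} centered at $y_0$ controls $|v(x,0)|$ and $|v(z,0)|$ by $C|x-z|^{1+s}$, and the gradient estimate $|\nabla_x v(z,0)|\le Cd^s$ bounds $|\nabla_x v(z,0)||x-z|$ in the same way. For $|x-z|<d/2$, the ball $B'_{d/2}(z)$ lies inside $\{u>\varphi\}$, and a H\"older-gradient estimate $|\nabla_x v(x,0)-\nabla_x v(z,0)|\le C|x-z|^s$ on this ball combined with a first-order Taylor expansion yields $|v(x,0)-P_z(x)|\le C|x-z|^{1+s}$.

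The decisive step, and the main obstacle, is establishing these scale-invariant gradient and H\"older-gradient estimates on $\{u>\varphi\}$. I would proceed by rescaling around the nearest free boundary point: setting $w(\zeta,\tau):=d^{-(1+s)}v(y_0+d\zeta,d\tau)$ on $B_2\subset\RR^{n+1}$, Proposition \ref{prop:Growth_v_around_0} gives $\|w(\cdot,0)\|_{L^\infty(B'_2)}\le C$ uniformly in $d$ and $y_0$, while a direct computation shows that $L_a w$ is an $\cH^n|_{\{\tau=0\}}$-supported measure with density $d^{s-1}h(y_0+d\zeta)$; by \eqref{eq:Growth_h} this density is bounded by $Cd^{s-1+\alpha}|\zeta|^\alpha$, which is uniformly controlled on $B'_2$ as soon as $\alpha>1-s$ (admissible because $s>1/2$). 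Scale-invariant regularity theory for $L_a$ with uniformly bounded $\cH^n|_{\{\tau=0\}}$-source then furnishes a uniform $C^{1+s}(B'_1)$ estimate on $w(\cdot,0)$, in the same spirit as Lemmas \ref{lem:Uniform_boundedness_C_1_alpha_n_ball_rescalings} and \ref{lem:Uniform_boundedness_C_1_alpha_n_plus_1_ball_rescalings} but sharpened to exponent $1+s$ by a blow-up argument exploiting the Almgren-type homogeneity provided by Propositions \ref{prop:Monotonicity_formula} and \ref{prop:Phi_at_0}. Unscaling produces precisely the gradient and H\"older-gradient controls on $v(\cdot,0)$ needed above, and the case analysis closes the proof with $v(\cdot,0)\in C^{1+s}(\RR^n)$.
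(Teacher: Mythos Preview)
Your overall strategy---a Campanato-type pointwise criterion on $v(\cdot,0)$---is different from the paper's, which instead shows that $w:=(-\Delta)^s v\in C^{1-s}(\RR^n)$ and then invokes \cite[Proposition~2.8]{Silvestre_2007}. The paper's argument exploits the fact that $v\equiv 0$ on $\Lambda$, so for $x_1,x_2\in\Lambda$ the difference $w(x_1)-w(x_2)$ reduces to an explicit convolution-type integral that can be estimated directly using the growth bound \eqref{eq:Growth_v_around_O}, while for points outside $\Lambda$ one simply has $w=-(-\Delta)^s\varphi\in C^{1-s}(\RR^n)$. Mixed cases are handled by bridging through the nearest free boundary point. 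No interior regularity for $v$ beyond what is already known is needed.

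Your proposal, by contrast, has a genuine gap at what you yourself identify as the decisive step. You assert that ``scale-invariant regularity theory for $L_a$ with uniformly bounded $\cH^n|_{\{\tau=0\}}$-source furnishes a uniform $C^{1+s}(B'_1)$ estimate on $w(\cdot,0)$'', where $B'_1$ is centered at the rescaled free boundary point. But this ball contains the free boundary, so the estimate you are invoking is precisely the optimal regularity statement you are trying to prove. Lemmas \ref{lem:Uniform_boundedness_C_1_alpha_n_ball_rescalings} and \ref{lem:Uniform_boundedness_C_1_alpha_n_plus_1_ball_rescalings} give only $C^{1+\beta}$ for $\beta<2s-1$, and your proposed ``sharpening to exponent $1+s$ by a blow-up argument exploiting Almgren-type homogeneity'' is not a proof: the monotonicity formula and Proposition~\ref{prop:Phi_at_0} have already been fully consumed in deriving the growth bound \eqref{eq:Growth_v_around_O}, and they do not by themselves upgrade a pointwise growth estimate at free boundary points to a uniform $C^{1+s}$ bound across the free boundary. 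A compactness-and-contradiction argument of the type you seem to have in mind would require identifying the blow-up limit and ruling out nontrivial solutions of lower homogeneity, which is a substantial additional argument you have not supplied.

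If you want to salvage your approach, note that the H\"older-gradient estimate you need is only required on $B'_{d/2}(z)\subset\{u>\varphi\}$, strictly inside the continuation region after rescaling. There one could try to invoke interior Schauder estimates for the Neumann problem $L_a w=g\,\cH^n|_{\{\tau=0\}}$ with $g\in C^{1-s}$ on a ball not touching the free boundary; this would give the needed $C^{1+s}$ bound without circularity. But this is a different mechanism from the blow-up/monotonicity one you invoked, and it requires verifying that such local Schauder estimates (with the degenerate weight $|y|^a$) are available in the form you need. The paper's route via $(-\Delta)^s v\in C^{1-s}$ sidesteps all of this.
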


\begin{rmk}
Because $v \in C^{1+s}(\RR^n)$ and $\varphi \in C^{2s+\alpha}(\RR^n)$, for all $\alpha\in (0,s)$, we obtain the optimal regularity for $u$, that is $u\in C^{1+s}(\RR^n)$.
\end{rmk}

\begin{proof}[Proof of Lemma \ref{lem:Optimal_regularity}]
Recall from Proposition \ref{prop:Solutions_partial_regularity} that the function $v$ belongs to $C^{1+\alpha}(\RR^n)$, for all $\alpha\in (0,s)$. Moreover, the function $v$ solves the obstacle problem 
$$
\min\{(-\Delta)^s v-(-\Delta)^s \varphi, v\}=0\quad\hbox{on }\RR^n.
$$
We denote $\Lambda :=\{v=0\}.$ By \cite[Proposition 2.8]{Silvestre_2007}, it follows that the function $v$ belongs to $C^{1+s}(\RR^n)$ if we show that the function $w:=(-\Delta)^s v$ belongs to $C^{1-s}(\RR^n)$. Because the function $v$ belongs to $C^{1+\alpha}(\RR^n)$, for all $\alpha\in (0,s)$, it follows by \cite[Proposition 2.6]{Silvestre_2007} that we also have $w \in C(\RR^n)$. It remains to consider the H\"older seminorm of $w$. We want to show that there is a positive constant, $C$, such that
\begin{equation}
\label{eq:Holder_seminorm_w}
|w(x_1)-w(x_2)|\leq C|x_1-x_2|^{1-s},\quad\forall x_1,x_2\in\RR^n.
\end{equation}
We consider the following cases.

\setcounter{case}{0}
\begin{case}[Points $x_1,x_2\notin\hbox{int }\Lambda$]
\label{case:Both_points_outside_Lambda}
For all points $x_1,x_2\notin\Lambda$, we have that $w(x_i)=-(-\Delta)^s\varphi(x_i)$, for $i=1,2.$ Because the function $\varphi$ belongs to $C^{2s+\alpha}(\RR^n)$, for all $\alpha \in (0,s)$, and $s\in (1/2,1)$, we see that $(-\Delta)^s\varphi \in C^{1-s}(\RR^n)$, and so
$$
|w(x_1)-w(x_2)|\leq \left[(-\Delta)^s\varphi\right]_{C^{1-s}(\RR^n)}|x_1-x_2|^{1-s}.
$$
Because $w$ is a continuous function, the preceding inequality holds if $x_1$ and/or $x_2$ belong to $\partial\Lambda$. Therefore, we obtain that
$$
|w(x_1)-w(x_2)|\leq \left[(-\Delta)^s\varphi\right]_{C^{1-s}(\RR^n)}|x_1-x_2|^{1-s},\quad\forall x_1,x_2\notin\hbox{int }\Lambda.
$$
\end{case}

\begin{case}[Points $x_1\in\Lambda$ and $x_2\in\partial\Lambda$]
\label{case:One_point_in_Lambda_one_point_on_boundary}
Let $x_1\in\Lambda$ and $x_2\in\partial\Lambda$. Without loss of generality, we may assume that $|x_1-x_2|=\dist(x_1,\partial\Lambda)$. Then we have that $v(x_i)=0$, for $i=1,2$. We let $\rho:=|x_1-x_2|$. Because $x_2\in\partial\Lambda$, we have by Proposition \ref{prop:Growth_v_around_0} that
\begin{equation}
\label{eq:Inequality_v_B_rho_x_2}
|v(y)| \leq C|x_2-y|^{1+s},\quad\forall y \in B'_{r_0}(x_2).
\end{equation}
Without loss of generality we may assume that $\rho<r_0/2$. Using the fact that $v(x_i)=0$, for $i=1,2$, we have
\begin{align*}
w(x_1)-w(x_2)&=-\int\frac{v(y)}{|x_1-y|^{n+2s}}\, dy + \int\frac{v(y)}{|x_2-y|^{n+2s}}\, dy\\
&=-\int_{B'_{\rho}(x_2)}\frac{v(y)}{|x_1-y|^{n+2s}}\, dy +\int_{B'_{\rho}(x_2)}\frac{v(y)}{|x_2-y|^{n+2s}}\, dy\\
&\quad+\int_{\RR^n\backslash B'_{\rho}(x_2)}v(y)\left(\frac{1}{|x_2-y|^{n+2s}}-\frac{1}{|x_1-y|^{n+2s}}\right)\, dy,
\end{align*}
which we write as a sum of three terms, $I_1$, $I_2$ and $I_3$, respectively. Using inequality \eqref{eq:Inequality_v_B_rho_x_2} and the fact that $|x_1-y|\geq |x_2-y|$, for all $y\in B'_{\rho}(x_2)$, we also have that
$$
|v(y)| \leq C|x_1-y|^{1+s},\quad\forall y \in B'_{\rho}(x_2).
$$
Then we can estimate the integral $I_1$ in the following way
\begin{align*}
|I_1| &\leq C\int_{B'_{\rho}(x_2)} \frac{|x_1-y|^{1+s}}{|x_1-y|^{n+2s}}\, dy\\
& \leq C \rho^{1-s}\\
& = C |x_1-x_2|^{1-s} 
\end{align*}
We can use inequality \eqref{eq:Inequality_v_B_rho_x_2} and the preceding argument to find that 
$$
|I_2| \leq C |x_1-x_2|^{1-s}.
$$
To estimate integral $I_3$, we use the Mean Value theorem and, for each $y\in \RR^n$, there is a point, $x_y \in\RR^n$, on the line connecting the points $x_1$ and $x_2$ such that 
\begin{align*}
|I_3| &\leq \int_{\RR^n\backslash B'_{\rho}(x_2)}|v(y)| \frac{|x_1-x_2|}{|x_y-y|^{n+2s+1}}\, dy\\
&= |x_1-x_2|\left(\int_{B'_{r_0}(x_2)\backslash B'_{\rho}(x_2)} \frac{|v(y)|}{|x_y-y|^{n+2s+1}}\, dy
+\int_{\RR^n\backslash B'_{r_0}(x_2)}\frac{|v(y)|}{|x_y-y|^{n+2s+1}}\, dy\right)\\
&=|x_1-x_2|\left(I_3'+I_3''\right).
\end{align*}
We denote by $I'_3$ and $I''_3$ the two integrals in the parenthesis on the right-hand side of the preceding inequality. We notice that there is a positive constant, $C$, such that $|x_y-y|\geq C|x_2-y|$ on $(\RR^n\backslash B'_{\rho}(x_2))\cap \Lambda^c$, since the vector $x_y$ is a convex combination of $x_1$ and $x_2$, and we recall that $\rho=|x_1-x_2|$. Using inequality \eqref{eq:Inequality_v_B_rho_x_2}, we obtain that there is a positive constant, $C$, such that
\begin{align*}
I_3' &\leq C\int_{B'_{r_0}(x_2)\backslash B'_{\rho}(x_2)} \frac{|x_2-y|^{1+s}}{|x_2-y|^{n+2s+1}}\, dy\\
&\leq C(1+|x_1-x_2|^{-s})\quad\hbox{(recall that $\rho=|x_1-x_2|$ and $\rho \leq r_0/2$)}.
\end{align*}
Using the boundedness of the function $v$, we obtain that there is a positive constant, $C=C(r_0,s)$, such that
\begin{align*}
I_3'' \leq C\|v\|_{C(\RR^n)}\int_{r_0}^{\infty} t^{-2s-2}\, dt \leq C.
\end{align*}
Therefore, combining the estimates of the integrals $I'_3$ and $I''_3$, we obtain
\begin{align*}
|I_3| &\leq C|x_1-x_2|(|x_1-x_2|^{-s}+C)\\
&\leq C|x_1-x_2|^{1-s},
\end{align*}
when $|x_1-x_2|$ is small enough. Using the estimates for the integrals $I_1$, $I_2$, and $I_3$, it follows that
obtain that
$$
|w(x_1)-w(x_2)|\leq C|x_1-x_2|^{1-s},
$$
for all $x_1\in\Lambda$ and $x_2\in\partial\Lambda$, such that $|x_1-x_2|=\dist(x_1,\partial\Lambda)$.
\end{case}

\begin{case}[Points $x_1,x_2\in\Lambda$]
\label{case:Both_points_in_Lambda}
For $i=1,2$, let $x_i\in \Lambda$, and let $x_i'\in\partial\Lambda$ be a projection of the point $x_i$ on the set $\partial\Lambda$, in the sense that $|x_i-x'_i|=\dist(x_i,\partial\Lambda)$. We consider two subcases.
In the \emph{first} case, we assume that
$$
|x_1-x_1'| \leq 5 |x_1-x_2|\quad\hbox{or}\quad |x_2-x_2'| \leq 5 |x_1-x_2|.
$$ 
Without loss of generality, we may assume that the first of the preceding inequalities holds. Then we also have that
$$
|x_2-x_1'| \leq 6 |x_1-x_2|,
$$
which gives us that $|x_2-x_2'| \leq 6 |x_1-x_2|$, and so $|x_2'-x_1'| \leq 12 |x_1-x_2|$. We easily obtain
\begin{align*}
|w(x_1)-w(x_2)|&\leq |w(x_1)-w(x_1')|+|w(x_1')-w(x_2')|+|w(x_2')-w(x_2)| \\
&\leq C|x_1-x_2|^{1-s}\quad\hbox{(using Cases \ref{case:Both_points_outside_Lambda} and \ref{case:One_point_in_Lambda_one_point_on_boundary})}.
\end{align*}
We now consider the \emph{second} case. We assume that 
\begin{equation}
\label{eq:Ineq_point_projection}
|x_i-x_i'| > 5 |x_1-x_2|,\quad i=1,2.
\end{equation}
As before, let $\rho=|x_1-x_2|$ and let $x:=(x_1+x_2)/2$. Using assumption \eqref{eq:Ineq_point_projection}, we see that $v\equiv0$ on $B'_{4\rho}(x)$, and so, we can write
\begin{align*}
w(x_1)-w(x_2) = \int_{\RR^n\backslash B'_{4\rho}(x)} v(y) \left(\frac{1}{|x_2-y|^{n+2s}}-\frac{1}{|x_1-y|^{n+2s}}\right)\, dy
\end{align*}
We estimate the term $|w(x_1)-w(x_2)|$ using the same argument that we applied in the case of the integral $I_3$ in Case \ref{case:One_point_in_Lambda_one_point_on_boundary}. By the Mean Value theorem, we have that
\begin{align*}
|w(x_1)-w(x_2)| &\leq |x_1-x_2|\left(\int_{B'_{r_0}(x)\backslash B'_{4\rho}(x)}  \frac{|v(y)|}{|x_y-y|^{n+2s+1}}\, dy\right.\\
&\quad\left.+\int_{\RR^n\backslash B'_{r_0}(x)} \frac{|v(y)|}{|x_y-y|^{n+2s+1}}\, dy\right),
\end{align*}
where the point $x_y$ is chosen as in Case \ref{case:One_point_in_Lambda_one_point_on_boundary}. We may assume without loss of generality that $r_0\geq 8\rho$. We again use the fact that the vector $x_y$ is a convex combination of $x_1$ and $x_2$, and so we can find a positive constant, $C$, such that $|x_y-y|\geq C|x-y|$ on $B'_{r_0}(x)\backslash B'_{4\rho}(x)$. Using Proposition \ref{prop:Growth_v_around_0}, and the fact that the function $v\equiv 0$ on $B'_{4\rho}(x)$, we have that
$$
|v(y)| \leq C|x-y|^{1+s},\quad\forall y \in B'_{r_0}(x)\backslash B'_{4\rho}(x),
$$
and so, we obtain that 
\begin{align*}
|w(x_1)-w(x_2)| &\leq C|x_1-x_2|(|x_1-x_2|^{-s}+C)\\
&\leq C|x_1-x_2|^{1-s},
\end{align*}
when $|x_1-x_2|$ is small enough.
\end{case}
Thus inequality \eqref{eq:Holder_seminorm_w} follows by combining the preceding cases.
\end{proof}

We can now give the
\begin{proof}[Proof of Theorem \ref{thm:Solutions}]
The existence of H\"older continuous solutions follows from Proposition \ref{prop:Solutions_partial_regularity}. Lemma \ref{lem:Optimal_regularity} gives us the optimal regularity of solutions, $u\in C^{1+s}(\RR^n)$. Proposition \ref{prop:Uniqueness} yields the uniqueness of solutions.
\end{proof}

\appendix
\section{Auxiliary results and proofs}
\label{sec:Auxiliary_results_proofs}

The following lemma is used in the proof of Proposition \ref{prop:Solutions_partial_regularity} to show that the function $g$ defined in \eqref{eq:Definition_g} belongs to the space of functions $C^{2s+\alpha_0}(\RR^n)$, when $u$ belongs to $C^{1+\alpha_0}(\RR^n)$ and $\alpha_0\in (0,1)$.

\begin{lem}
\label{lem:Regularity_g}
Let $s\in(1/2,1)$. Assume that $u\in C^1(\RR^n)$ and $\eta$ is a smooth cut-off function defined as in \eqref{eq:Definition_eta}. Then the function, $h$, defined by
$$
\RR^n \ni x \mapsto h(x):=\int_{\RR^n} \frac{(\eta(y)-\eta(x))(u(y)-u(x))}{|x-y|^{n+2s}}\ dy,
$$
belongs to $C^{2(1-s)}(\RR^n)$.
\end{lem}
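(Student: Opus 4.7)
The plan is to rewrite $h$ via the substitution $z = y - x$, giving
\[
h(x) = \int_{\RR^n} \frac{G(x,z)}{|z|^{n+2s}}\, dz, \qquad G(x,z) := (\eta(x+z)-\eta(x))(u(x+z)-u(x)).
\]
Since $\eta \in C^{\infty}_c(\RR^n)$ and $u \in C^1(\RR^n)$, both factors are Lipschitz, so $|G(x,z)| \leq C\,\min(|z|^2, 1)$ uniformly in $x$. Combined with the singularity of $|z|^{-n-2s}$, this gives $|h(x)| < \infty$ for every $x$: on $\{|z|\leq 1\}$ the integrand is bounded by $C|z|^{2-n-2s}$, integrable because $s<1$, and on $\{|z|>1\}$ bounded by $C|z|^{-n-2s}$, integrable because $s>0$.

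For the H\"older estimate, fix $x_1, x_2 \in \RR^n$ with $r := |x_1-x_2| \leq 1$ (the case $r>1$ being trivial from $\|h\|_{C(\RR^n)}<\infty$) and split
\[
h(x_1)-h(x_2) = \int_{|z|\leq 2r} \frac{G(x_1,z)-G(x_2,z)}{|z|^{n+2s}}\, dz + \int_{|z|>2r} \frac{G(x_1,z)-G(x_2,z)}{|z|^{n+2s}}\, dz =: A + B.
\]
For $A$, bound each term $|G(x_i,z)| \leq C|z|^2$ separately to obtain
\[
|A| \leq C\int_{|z|\leq 2r} |z|^{2-n-2s}\, dz \leq C\, r^{2-2s},
\]
which uses only $s<1$.

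For $B$, apply the mean value theorem in $x$ along the segment joining $x_1$ and $x_2$, so that $|G(x_1,z)-G(x_2,z)| \leq r \sup_{x\in[x_1,x_2]} |\nabla_x G(x,z)|$. A direct computation gives
\[
\nabla_x G(x,z) = (\nabla\eta(x+z)-\nabla\eta(x))(u(x+z)-u(x)) + (\eta(x+z)-\eta(x))(\nabla u(x+z)-\nabla u(x)),
\]
and combining the bounded-derivative estimate $|\nabla\eta(x+z)-\nabla\eta(x)| \leq C$ with $|u(x+z)-u(x)| \leq C|z|$ on $\{|z|\leq 1\}$, together with the $L^\infty$ bounds on $\eta$, $u$, $\nabla\eta$, $\nabla u$ for $\{|z|>1\}$, yields $|\nabla_x G(x,z)| \leq C\min(|z|,1)$. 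Therefore
\[
|B| \leq C r \int_{2r < |z| \leq 1} |z|^{1-n-2s}\, dz + Cr\int_{|z|>1} |z|^{-n-2s}\, dz \leq C r \cdot r^{1-2s} + Cr \leq C\, r^{2-2s},
\]
where the first integral was evaluated using $2s>1$ so that $\int_{2r}^{1} s^{-2s}\, ds \leq C r^{1-2s}$, and the last inequality uses $r \leq 1$ together with $2-2s < 1$. Combining the bounds on $A$ and $B$ gives $|h(x_1)-h(x_2)| \leq C |x_1-x_2|^{2(1-s)}$, which together with $h\in C(\RR^n)$ completes the proof.

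The only real obstacle is keeping track of how the two-sided constraint $s\in(1/2,1)$ enters: on the near region convergence of the kernel requires $s<1$ (to make $2-2s>0$), while on the far region we need $s>1/2$ so that $\int_{2r}^{1} s^{-2s}\, ds$ produces a factor of exactly size $r^{1-2s}$ rather than a logarithm; these two constraints match to give precisely the H\"older exponent $2(1-s)\in(0,1)$ claimed.
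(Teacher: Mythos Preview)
Your proof is correct and follows essentially the same approach as the paper: after the substitution centering the singularity at the origin, you split into a near region $\{|z|\le 2r\}$ and a far region, bounding the integrand by $|z|^2$ near the origin and by $|x_1-x_2|\cdot \min(|z|,1)$ far away. The paper organizes the far-region estimate via the algebraic identity $a_1b_1-a_2b_2=a_1(b_1-b_2)+b_2(a_1-a_2)$ rather than your mean value theorem computation of $\nabla_xG$, but the resulting bounds and the use of $s\in(1/2,1)$ on the two regions are identical.
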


\begin{rmk}
Notice that $2(1-s)\in (0,1)$, from our assumption that $s\in (1/2,1)$.
\end{rmk}

\begin{proof}[Proof of Lemma \ref{lem:Regularity_g}]
It is easy to see that the function $h$ is bounded, and so, we only need to consider its H\"older seminorm. For all $x_1,x_2 \in\RR^n$, and find
\begin{align*}
h(x_1)-h(x_2) &= \int_{\RR^n} \frac{(\eta(x_1-y)-\eta(x_1))(u(x_1-y)-u(x_1)- u(x_2-y)+u(x_2))}{|y|^{n+2s}}\ dy\\
&\quad+ \int_{\RR^n} \frac{(u(x_2-y)-u(x_2))(\eta(x_2-y)-\eta(x_2)- \eta(x_1-y)+\eta(x_1))}{|y|^{n+2s}}\ dy
\end{align*}
We evaluate the first integral on $B'_{|x_1-x_2|}(x_1)$ using the inequalities, 
\begin{align*}
|\eta(x_1-y)-\eta(x_1)| &\leq \|\nabla\eta\|_{C(\RR^n)} |y|,\quad\forall y\in\RR^n,\\
|u(x_i-y)-u(x_i)| &\leq \|\nabla u\|_{C(\RR^n)} |y|,\quad i=1,2, \quad\forall y\in\RR^n,
\end{align*}
and then we evaluate the first integral on $(B'_{|x_1-x_2|}(x_1))^c$ using the inequalities
\begin{align*}
|\eta(x_1-y)-\eta(x_1)| &\leq \|\nabla\eta\|_{C(\RR^n)} |y|, \quad\forall y\in\RR^n,\\
|u(x_1+y)-u(x_2+y)| &\leq \|\nabla u\|_{C(\RR^n)} |x_1-x_2|, \quad\forall y\in\RR^n.
\end{align*}
Applying the same reasoning to the second integral, and using the fact that $\eta$ has compact support in $\RR^n$, we obtain
$$
|h(x_1)-h(x_2)| \leq C|x_1-x_2|^{2(1-s)}.
$$
Thus it follows that the function $h$ belongs to $C^{2(1-s)}(\RR^n)$.
\end{proof}

In order to establish the validity of Proposition \ref{prop:Monotonicity_formula}, we first need to prove Lemmas \ref{lem:Estimate_mean_v}, \ref{lem:L_2_estimate_v_boundary}, \ref{lem:L_2_estimate_v} and \ref{lem:Gradient_estimate} which have their analogues in \cite[Lemma 2.9, Lemma 2.13, Corollary 2.15 and Lemma 7.8]{Caffarelli_Salsa_Silvestre_2008}, respectively. The difference from the results in \cite{Caffarelli_Salsa_Silvestre_2008} and our framework appear on the right-hand side of the estimates \eqref{eq:Estimate_mean_v}, \eqref{eq:L_2_estimate_v_boundary} and \eqref{eq:L_2_estimate_v} which need to take into account the fact that we can only assume that the obstacle function $\varphi \in C^{2s+\alpha}(\RR^n)$, for all $\alpha\in (0,s)$, as opposed to $\varphi\in C^2(\RR^n)$ in \cite{Caffarelli_Salsa_Silvestre_2008}. Even though the proofs of the following lemmas are very similar to the ones in \cite{Caffarelli_Salsa_Silvestre_2008} we include them to emphasize the differences. We begin with the analogue of \cite[Lemma 2.9]{Caffarelli_Salsa_Silvestre_2008}.

\begin{lem}[Estimate on the mean of $v$ on $\partial B_r$]
\label{lem:Estimate_mean_v}
Let $s\in (1/2,1)$. Assume that the function $v$ is defined by \eqref{eq:Auxiliary_function_v}. Then we have that
\begin{equation}
\label{eq:Estimate_mean_v}
 \frac{1}{\omega_{n+a}r^{n+a}} \int_{\partial B_r} v(x,y)|y|^a \leq C r^{2s+\alpha},\quad\forall r\in(0,1),
\end{equation}
where $\omega_{n+a}:=\int_{\partial B_1} |y|^a$.
\end{lem}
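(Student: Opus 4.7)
The plan is to view the left-hand side as the $|y|^a$-weighted spherical mean
$$\phi(r) := \frac{1}{\omega_{n+a}\,r^{n+a}}\int_{\partial B_r} v\,|y|^a,$$
and to control it by integrating a pointwise upper bound on $\phi'(r)$ obtained from the weighted divergence theorem together with \eqref{eq:Upper_bound_L_a} and \eqref{eq:Growth_h}. Parametrizing $\partial B_r$ by $(x,y) = r\omega$, $\omega \in S^n$, one rewrites
$$\phi(r) = \frac{1}{\omega_{n+a}}\int_{S^n} v(r\omega)\,|\omega_{n+1}|^a\,d\sigma(\omega),$$
so that, by continuity of $v$ at the origin, $\phi$ extends continuously to $r=0$ with $\phi(0+) = v(O,0) = u(O) - \varphi(O) = 0$; the last equality uses that $O \in \partial\{u=\varphi\}$ and that the $|y|^{1-a}$ term in \eqref{eq:Auxiliary_function_v} vanishes at $y=0$.

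Next I would differentiate under the integral sign to obtain
$$\phi'(r) = \frac{1}{\omega_{n+a}\,r^{n+a}}\int_{\partial B_r}|y|^a\,\partial_\nu v,$$
where $\partial_\nu v$ denotes the radial derivative. The heart of the proof is to identify the right-hand boundary integral with $\int_{B_r} L_a v$. Because $v$ is $L_a$-harmonic on $\RR^{n+1}\setminus\{y=0\}$ by \eqref{eq:Properties_v_1} and lies in $H^1(B_r,|y|^a)$, one applies the standard weighted Green identity separately on the smooth subdomains $B_r^+$ and $B_r^- := B_r \cap (\RR^n \times (-\infty,0))$ and sums the resulting identities. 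Invoking the even symmetry $v(x,y) = v(x,-y)$ and \eqref{eq:Equation_v} to collect the two conormal contributions across $\{y=0\}$ yields
$$\int_{\partial B_r}|y|^a\,\partial_\nu v = \int_{B_r} L_a v,$$
interpreted as the action of the signed Radon measure $L_a v$ on the ball $B_r$. Invoking \eqref{eq:Upper_bound_L_a} and \eqref{eq:Growth_h} now gives
$$\int_{B_r} L_a v \leq \int_{B'_r} h(x)\,dx \leq C\int_{B'_r}|x|^\alpha\,dx \leq C'\,r^{n+\alpha}.$$

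Putting these pieces together and using $a = 1-2s$, one gets $\phi'(r) \leq C\,r^{2s+\alpha-1}$. Since $2s+\alpha-1 > -1$ and $\phi(0+) = 0$, integration from $0$ to $r$ produces $\phi(r) \leq C''\,r^{2s+\alpha}$, which is exactly \eqref{eq:Estimate_mean_v}. The main obstacle will be the careful justification of the divergence identity $\int_{\partial B_r}|y|^a\partial_\nu v = \int_{B_r} L_a v$: since $L_a v$ is only a signed measure supported on $\{y=0\}$, the ``boundary trace'' $|y|^a\,\partial_\nu v$ must be interpreted as the weighted conormal coming from each half-ball $B_r^\pm$ (where $v$ is smooth in the interior by elliptic regularity for the $L_a$-operator) and then matched across $\{y=0\}$ using \eqref{eq:Equation_v}. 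Once this is in place, the estimate reduces to the elementary ODE computation above.
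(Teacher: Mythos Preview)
Your proof is correct. Both your argument and the paper's are versions of the weighted mean-value inequality, but the presentations differ: the paper tests the measure $L_a v$ directly against the truncated fundamental solution
\[
\Gamma(x,y)=\frac{1}{(n+a-1)\omega_{n+a}}\max\{|(x,y)|^{-(n+a-1)}-r^{-(n+a-1)},0\}
\]
and reads off the mean from the resulting representation $\phi(r)=v(O)+\int_{B_r}\Gamma\,L_a v$, then bounds $\int_{B'_r}|x|^{\alpha}\Gamma(x,0)\,dx\le Cr^{2s+\alpha}$ in one shot. You instead differentiate the spherical mean, use the divergence theorem to get $\phi'(r)=\frac{1}{\omega_{n+a}r^{n+a}}\int_{B_r}L_a v$, and integrate the resulting ODE bound. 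The two are equivalent: applying Fubini to your expression $\phi(r)=\int_0^r\phi'(\rho)\,d\rho$ recovers exactly the paper's $\int \Gamma\,L_a v$. Your route is arguably more elementary in that it avoids writing down the Green's function explicitly; the paper's is slightly more compact since it skips the differentiate-then-reintegrate step. Your careful remark about justifying the flux identity across $\{y=0\}$ by splitting into $B_r^{\pm}$ is appropriate and matches how the paper implicitly treats such identities elsewhere (e.g., in deriving \eqref{eq:v_v_nu}).
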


\begin{proof}
Let $r\in (0,1)$. As in the proof of \cite[Lemma 2.9]{Caffarelli_Salsa_Silvestre_2008}, we consider the auxiliary function
$$
\Gamma(x,y):=\frac{1}{(n+a-1)\omega_{n+a}}\max\{|(x,y)|^{-(n+a-1)}-r^{-(n+a-1)}, 0\},
$$
and the integration by parts formula gives us
\begin{align*}
-\int_{B_1} L_a v(x,y)\Gamma(x,y) = v(O) - \frac{1}{\omega_{n+a}r^{n+a}}  \int_{\partial B_r} v(x,y)|y|^a.
\end{align*}
Using the fact that $v(O)=0$, inequality \eqref{eq:Upper_bound_L_a} together with \eqref{eq:Definition_h} and \eqref{eq:Growth_h}, we obtain that
\begin{align*}
\frac{1}{\omega_{n+a}r^{n+a}}  \int_{\partial B_r} v(x,y)|y|^a &\leq C\int_{B_1} L_a v(x,y)\Gamma(x,y) \\
&\leq C r^{\alpha+2s}\quad\hbox{(where we used the fact that $a=1-2s$),}
\end{align*}
from where estimate \eqref{eq:Estimate_mean_v} now follows.
\end{proof}

We use Lemma \ref{lem:Estimate_mean_v} to prove

\begin{lem}[Estimate of the $L^2(\partial B_r, |y|^a)$ norm of $v$]
\label{lem:L_2_estimate_v_boundary}
Let $s\in (1/2,1)$. Assume that the function $v$ is defined by \eqref{eq:Auxiliary_function_v}. Then there is a positive constant, $C$, such that
\begin{equation}
\label{eq:L_2_estimate_v_boundary}
\int_{\partial B_r} |v|^2 |y|^a \leq C r\int_{B_r}|\nabla v|^2 |y|^a + C r^{n+a+2(2s+\alpha)},\quad\forall r\in (0,1).
\end{equation}
\end{lem}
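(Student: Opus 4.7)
I would split $v$ on $\partial B_r$ into its $|y|^a$-weighted spherical mean plus the oscillation and bound each piece separately. Set $\bar v(r) := (\omega_{n+a}r^{n+a})^{-1}\int_{\partial B_r} v |y|^a$. The orthogonality identity
\begin{equation*}
\int_{\partial B_r} v^2|y|^a \;=\; \int_{\partial B_r} (v - \bar v(r))^2 |y|^a \;+\; \omega_{n+a} r^{n+a} \bar v(r)^2
\end{equation*}
reduces the target inequality \eqref{eq:L_2_estimate_v_boundary} to two independent bounds: a two-sided estimate on $\bar v(r)$, and a weighted Poincar\'e-type bound on the oscillation.

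\textbf{Estimate on the mean.} Lemma \ref{lem:Estimate_mean_v} provides the upper bound $\bar v(r) \leq C r^{2s+\alpha}$. For the matching lower bound, I would show that $v \geq 0$ on all of $\RR^{n+1}$, whence $\bar v(r) \geq 0$. Indeed, both summands in \eqref{eq:Auxiliary_function_v} are non-negative: the $L_a$-harmonic (Poisson) extension of $u - \varphi$ from $\RR^n$ to $\RR^n\times\bar\RR_+$ inherits non-negativity from its boundary trace $u-\varphi \geq 0$ via the positive Poisson kernel of \cite[\S 2.4]{Caffarelli_Silvestre_2007}, and the term $\frac{1}{2s}(-\Delta)^s\varphi(O)|y|^{1-a}$ is non-negative because $(-\Delta)^s\varphi(O) \geq 0$. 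The latter is the Fekete-type observation at the free-boundary point $O$: the non-negative function $u - \varphi$ attains its minimum $0$ at $O$, so $(-\Delta)^s(u-\varphi)(O) \leq 0$, and combining this with $(-\Delta)^s u(O) \geq 0$ (which holds by continuity of $(-\Delta)^s u$ at $O$, using $u \in C^{1+\alpha}$ for some $\alpha > 2s-1$) yields $(-\Delta)^s\varphi(O) \geq 0$. Consequently $0 \leq \bar v(r) \leq C r^{2s+\alpha}$ and $\omega_{n+a}r^{n+a}\bar v(r)^2 \leq C r^{n+a+2(2s+\alpha)}$.

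\textbf{Estimate on the oscillation.} Rescale by $(x,y)\mapsto r(x,y)$: setting $f(x,y) := v(r(x,y)) - \bar v(r)$ gives an element of $H^1(B_1,|y|^a)$ with $\int_{\partial B_1} f|y|^a = 0$. By the standard scaling of the weighted Sobolev norms,
\begin{equation*}
\int_{\partial B_r} (v-\bar v(r))^2|y|^a = r^{n+a}\int_{\partial B_1} f^2|y|^a, \qquad \int_{B_r}|\nabla v|^2|y|^a = r^{n+a-1}\int_{B_1}|\nabla f|^2|y|^a,
\end{equation*}
so the oscillation bound reduces to the weighted Poincar\'e-trace inequality
\begin{equation*}
\int_{\partial B_1} f^2 |y|^a \leq C \int_{B_1} |\nabla f|^2 |y|^a
\end{equation*}
for $f$ with zero weighted mean on $\partial B_1$. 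Since $|y|^a$ belongs to the Muckenhoupt $A_2$ class, the weighted trace inequality and the weighted Poincar\'e inequality in $B_1$ (both in \cite{Fabes_Kenig_Serapioni_1982a}) together imply, for $\bar f_{B_1}$ the weighted ball mean, $\int_{\partial B_1}(f-\bar f_{B_1})^2|y|^a \leq C\int_{B_1}|\nabla f|^2|y|^a$. The zero-mean condition on $\partial B_1$ makes $0$ the $L^2(\partial B_1,|y|^a)$-best constant approximation of $f$, so $\int_{\partial B_1} f^2|y|^a \leq \int_{\partial B_1}(f-\bar f_{B_1})^2|y|^a$, and combining all the above yields \eqref{eq:L_2_estimate_v_boundary}.

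\textbf{Main obstacle.} The principal subtlety is the two-sided control of $\bar v(r)$, since Lemma \ref{lem:Estimate_mean_v} supplies only the upper bound. The crucial input is the non-negativity of $v$, which hinges on the Fekete-type inequality $(-\Delta)^s\varphi(O) \geq 0$ at a free-boundary point—this is where the structure of the obstacle problem enters, as opposed to the purely functional-analytic scaling argument used for the oscillation piece.
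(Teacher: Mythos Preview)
Your proof is correct and follows essentially the same decomposition as the paper: split $v$ on $\partial B_r$ into its weighted spherical mean plus the oscillation, control the oscillation by a weighted Poincar\'e inequality, and control the mean by Lemma~\ref{lem:Estimate_mean_v}. The paper invokes \cite[Lemma~2.10]{Caffarelli_Salsa_Silvestre_2008} directly for the Poincar\'e step rather than rederiving it via rescaling and \cite{Fabes_Kenig_Serapioni_1982a}; note that the trace-on-sphere inequality you need is not stated in \cite{Fabes_Kenig_Serapioni_1982a} as such, so citing \cite[Lemma~2.10]{Caffarelli_Salsa_Silvestre_2008} is cleaner.

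Where your write-up is actually more complete than the paper's is the lower bound $\bar v(r)\geq 0$. The paper passes from the one-sided estimate $\bar v(r)\leq Cr^{2s+\alpha}$ of Lemma~\ref{lem:Estimate_mean_v} to a bound on $\bar v(r)^2$ without comment, tacitly using $v\geq 0$ on $\RR^{n+1}$. Your argument for this---positivity of the Poisson extension of $u-\varphi\geq 0$ together with $(-\Delta)^s\varphi(O)\geq 0$ at a free boundary point---makes this step explicit and is correct; in fact $(-\Delta)^s u(O)=0$ (not just $\geq 0$) since $O\in\partial\{u>\varphi\}$ and $(-\Delta)^s u$ is continuous, but the weaker inequality already suffices.
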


\begin{rmk}
We notice that the bound $r^{n+a+2(2s+\alpha)}$ on the right-hand side of estimate \eqref{eq:L_2_estimate_v_boundary} is an improvement over the bound $r^{n+a+2(1+\alpha)}$ which follows from the fact that $v\in C^{1+\alpha}(\RR^n)$. We recall that $2s>1$. This improvement is crucial in our proof of the Monotonicity formula in Proposition \ref{prop:Monotonicity_formula}.
\end{rmk}

\begin{proof}[Proof of Lemma \ref{lem:L_2_estimate_v_boundary}]
By Poincar\'e inequality \cite[Lemma 2.10]{Caffarelli_Salsa_Silvestre_2008}, we have
$$
\int_{\partial B_r} |v-k|^2 |y|^a \leq C r\int_{B_r} |\nabla v|^2|y|^a,
$$
where $k$ is given by
$$
k:=\frac{1}{\omega_{n+a}r^{n+a}}\int_{\partial B_r} v|y|^a,
$$
and we recall that $\omega_{n+a}=\int_{\partial B_1} |y|^a$. The preceding inequality gives us that
$$
\int_{\partial B_r} |v|^2 |y|^a \leq 2 C r\int_{B_r}|\nabla v|^2 |y|^a + 2 k^2 r^{n+a},
$$
and, using estimate \eqref{eq:Estimate_mean_v}, inequality \eqref{eq:L_2_estimate_v_boundary} follows immediately.
\end{proof}

Integrating estimate \eqref{eq:L_2_estimate_v_boundary} in the radial direction, we obtain the straightforward analogue of \cite[Corollary 2.15]{Caffarelli_Salsa_Silvestre_2008}. 

\begin{lem}[Estimate of the $L^2(B_r, |y|^a)$ norm of $v$]
\label{lem:L_2_estimate_v}
Let $s\in (1/2,1)$. Assume that the function $v$ is defined by \eqref{eq:Auxiliary_function_v}. Then there is a positive constant, $C$, such that
\begin{equation}
\label{eq:L_2_estimate_v}
\int_{B_r} |v|^2 |y|^a \leq C r^2\int_{B_r}|\nabla v|^2 |y|^a + Cr^{n+a+2(2s+\alpha)+1},\quad\forall r\in (0,1).
\end{equation}
\end{lem}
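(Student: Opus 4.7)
The plan is to obtain the claimed bound directly by integrating the boundary estimate \eqref{eq:L_2_estimate_v_boundary} in the radial variable, exactly as suggested by the remark immediately preceding the lemma. This is a purely routine calculation; no monotonicity or auxiliary construction is required. Since we work in the half-space setup with weight $|y|^a$ and $v$ is extended by even reflection, the coarea-type identity
\begin{equation*}
\int_{B_r} |v|^2|y|^a = \int_0^r \left(\int_{\partial B_t} |v|^2|y|^a\right) dt
\end{equation*}
is available and lets us pass from a sphere estimate to a ball estimate.

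First, I would fix $r\in (0,1)$ and apply Lemma \ref{lem:L_2_estimate_v_boundary} on the sphere $\partial B_t$ for each $t\in (0,r)$, obtaining
\begin{equation*}
\int_{\partial B_t} |v|^2|y|^a \leq C\, t \int_{B_t}|\nabla v|^2|y|^a + C\, t^{n+a+2(2s+\alpha)}.
\end{equation*}
Next, I would use the monotonicity in $t$ of the Dirichlet energy on concentric balls: since $|\nabla v|^2|y|^a\ge 0$, one has $\int_{B_t}|\nabla v|^2|y|^a \leq \int_{B_r}|\nabla v|^2|y|^a$ for all $t\leq r$, so that the first term on the right-hand side above is controlled by $C\,t \int_{B_r}|\nabla v|^2|y|^a$.

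Finally, I would integrate the resulting inequality in $t$ from $0$ to $r$. The first term yields
\begin{equation*}
C\int_0^r t\, dt \int_{B_r}|\nabla v|^2|y|^a = \tfrac{C}{2}\, r^2 \int_{B_r}|\nabla v|^2|y|^a,
\end{equation*}
while the second contributes
\begin{equation*}
C\int_0^r t^{n+a+2(2s+\alpha)}\, dt = \frac{C}{n+a+2(2s+\alpha)+1}\, r^{n+a+2(2s+\alpha)+1},
\end{equation*}
which is finite since $n+a+2(2s+\alpha)+1>0$ for our range of parameters ($a=1-2s>-1$ and $s>1/2$, $\alpha>0$). Combining these two contributions and absorbing numerical factors into $C$ gives the desired estimate \eqref{eq:L_2_estimate_v}. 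There is no substantive obstacle: the only check needed is that the radial integrability of $t^{n+a+2(2s+\alpha)}$ near $t=0$ holds, which is immediate from the parameter restrictions, and that Fubini applies on the weighted ball, which is standard since $|y|^a$ is locally integrable on $\RR^{n+1}$ for $a\in(-1,0)$.
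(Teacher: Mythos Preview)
Your proposal is correct and follows exactly the approach indicated in the paper, which states just before the lemma that one obtains it by ``integrating estimate \eqref{eq:L_2_estimate_v_boundary} in the radial direction.'' The paper omits the proof as straightforward, and your computation fills in precisely the intended details.
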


We include for completion the analogue of \cite[Lemma 7.8]{Caffarelli_Salsa_Silvestre_2008} adapted to our framework. We omit the proof because it is exactly as in \cite{Caffarelli_Salsa_Silvestre_2008}.

\begin{lem}
\label{lem:Gradient_estimate}
Let $s\in (1/2,1)$. Assume that the function $v$ is defined by \eqref{eq:Auxiliary_function_v}. Then the following identity holds
\begin{equation}
\label{eq:Gradient_estimate}
r\int_{\partial B_r} \left(|v_{\tau}|^2-|v_{\nu}|^2\right)|y|^a 
= (n+a-1) \int_{B_r} |\nabla v|^2|y|^a  -2\int_{B_r} (x,y)\dotprod \nabla v L_a v,
\end{equation}
where $v_{\tau}$ is the gradient in the tangential direction to $\partial B_r$, and $v_{\nu}$ is the derivative in the normal direction to $\partial B_r$.
\end{lem}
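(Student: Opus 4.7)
The plan is to derive this Pohozaev-type identity by multiplying the equation $L_a v = \text{div}(|y|^a\nabla v)$ by the radial field $X\dotprod\nabla v$, where $X:=(x,y)$ denotes the position vector in $\RR^{n+1}$, and integrating by parts twice over $B_r$. Formally, one integration yields
$$\int_{B_r}(X\dotprod\nabla v)\, L_a v = \int_{\partial B_r}|y|^a(X\dotprod\nabla v)v_\nu - \int_{B_r}|y|^a\nabla(X\dotprod\nabla v)\dotprod\nabla v.$$
Using the algebraic identity $\nabla(X\dotprod\nabla v)\dotprod\nabla v = |\nabla v|^2 + \tfrac{1}{2}X\dotprod\nabla|\nabla v|^2$ together with the divergence computation $\nabla\dotprod(|y|^a X) = (n+1+a)|y|^a$ valid away from $\{y=0\}$, a second integration by parts on the remaining transport term produces a further boundary contribution $\tfrac{r}{2}\int_{\partial B_r}|\nabla v|^2|y|^a$ and a bulk contribution $-\tfrac{n+1+a}{2}\int_{B_r}|\nabla v|^2|y|^a$. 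On $\partial B_r$ one has $X\dotprod\nu = r$ and $X\dotprod\nabla v = rv_\nu$, so that $|\nabla v|^2 = v_\tau^2 + v_\nu^2$ regroups the spherical boundary terms into $\tfrac{r}{2}\int_{\partial B_r}(v_\nu^2 - v_\tau^2)|y|^a$, and after multiplication by $2$ and rearrangement, the identity \eqref{eq:Gradient_estimate} emerges.

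Because the weight $|y|^a$ is singular at $\{y=0\}$ (as $a = 1-2s \in (-1,0)$) and $L_a v$ is a measure with a nontrivial singular component supported on $\{y=0\}$, the integration by parts must be justified by a regularization. I would work first on the slab-complement $B_r\cap\{|y|>\varepsilon\}$, where $v$ is classically $L_a$-harmonic and $|y|^a$ is smooth, carry out the two integrations by parts above, and then pass to the limit $\varepsilon\downarrow 0$. The spherical boundary contribution and the $|\nabla v|^2|y|^a$ bulk integral converge by dominated convergence, using that $v\in H^1(B_r,|y|^a)$ and that $v$ restricted to $\partial B_r$ is $C^{1+\alpha}$ for a.e.\ $r$. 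The boundary contributions from the two flat pieces $\{y=\pm\varepsilon\}\cap B_r$ must combine to recover the term $-2\int_{B_r}(X\dotprod\nabla v)L_a v$, where the integration against $L_a v$ is understood as pairing $X\dotprod\nabla v = x\dotprod\nabla_x v$ (since $y=0$ on the support) with the measure $L_a v|_{\{y=0\}}$ described by \eqref{eq:Equality_L_a}.

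The main obstacle is verifying this limiting identification: one must show that the quadratic-in-$v_y$ boundary integrals at $y=\pm\varepsilon$ vanish as $\varepsilon\downarrow 0$, while the cross terms converge to the correct pairing. This follows by exploiting the even reflection of $v$ across $\{y=0\}$, which makes $v_y(x,0^+) = 0$ in the classical limit while $\varepsilon^a v_y(x,\varepsilon)$ has a finite limit given by the Dirichlet-to-Neumann identity \eqref{eq:Dirichlet_to_Neumann_map}, together with the assumed regularity $\varphi \in C^{2s+\alpha}(\RR^n)$ and $u \in C^{1+\alpha}(\RR^n)$ which controls the relevant traces on $\partial B_r$ and $\{y=0\}$. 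Since the statement is presented as the direct analogue of \cite[Lemma 7.8]{Caffarelli_Salsa_Silvestre_2008}, the limiting step parallels that reference's treatment of the conjugate weighted Pohozaev identity, with only cosmetic modifications to accommodate the measure $L_a v$ being nontrivial on $\{y=0\}\setminus\{v=0\}$ in our setting.
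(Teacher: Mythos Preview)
Your proposal is correct and follows precisely the standard Pohozaev--Rellich derivation that the paper has in mind: the paper in fact omits the proof entirely, stating that it is exactly as in \cite[Lemma~7.8]{Caffarelli_Salsa_Silvestre_2008}, and your sketch (multiply $L_a v$ by $X\dotprod\nabla v$, integrate by parts twice using $\nabla\dotprod(|y|^a X)=(n+1+a)|y|^a$, regularize on $\{|y|>\varepsilon\}$ and pass to the limit) is that argument. Your algebra checks out and your outline of the $\varepsilon\downarrow 0$ limit is the right one; the only cosmetic slip is the phrase ``$v_y(x,0^+)=0$ in the classical limit'' --- what you actually use (and state correctly in the same sentence) is that $\varepsilon^a v_y(x,\varepsilon)$ has a finite limit, which forces the extra quadratic flat-boundary term $\varepsilon^{1+a}\!\int v_y^2$ to be $O(\varepsilon^{1-a})\to 0$.
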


Using Lemmas \ref{lem:L_2_estimate_v_boundary}, \ref{lem:L_2_estimate_v} and \ref{lem:Gradient_estimate}, we can now give the
\begin{proof}[Proof of Proposition \ref{prop:Monotonicity_formula}]
To prove Proposition \ref{prop:Monotonicity_formula} we follow the proof of \cite[Theorem 3.1]{Caffarelli_Salsa_Silvestre_2008} which we adapt to our framework. Let $\alpha\in (2s-1,s)$ and $p \in [s,\alpha+s-1/2)$. Without loss of generality, we may assume that
\begin{equation}
\label{eq:Ineq_F_v_r}
F_v(r) > r^{(n+a)+2(1+p)}.
\end{equation}
Then we obtain
\begin{align}
\label{eq:Identity_Phi_p}
\Phi^p_v(r) = r\frac{F'(r)}{F(r)}= r\frac{2\int_{\partial B_r} v v_{\nu}|y|^a}{\int_{\partial B_r} |v|^2 |y|^a} + (n+a),
\end{align}
where $v_{\nu}$ is the gradient in the outer normal direction to $\partial B_r$. We let $\Psi^p_v(r):=\Phi^p_v(r)-(n+a)$. Similarly to \cite[Identity (7.11)]{Caffarelli_Salsa_Silvestre_2008}, we have 
\begin{align}
\label{eq:Derivative_phi_p}
\frac{d}{dr}\log \Psi^p_v(r) &= \frac{1}{r} +\frac{\frac{d}{dr} \int_{\partial B_r} v v_{\nu}|y|^a}{\int_{\partial B_r} v v_{\nu}|y|^a}
-\frac{2\int_{\partial B_r} v v_{\nu}|y|^a}{\int_{\partial B_r} |v|^2|y|^a}- \frac{n+a}{r}.
\end{align}
Integration by parts gives the identities
\begin{align}
\label{eq:v_v_nu}
\int_{\partial B_r} v v_{\nu}|y|^a &=\int_{B_r} |\nabla v|^2 |y|^a + \int_{B_r} v L_a v,\quad\hbox{(\cite[Identity (7.12)]{Caffarelli_Salsa_Silvestre_2008})}\\
\label{eq:derivative_v_v_nu}
\frac{d}{dr}\int_{\partial B_r} v v_{\nu}|y|^a &=\int_{\partial B_r} |\nabla v|^2 |y|^a + \int_{\partial B_r} v L_a v. \quad\hbox{(\cite[Identity (7.13)]{Caffarelli_Salsa_Silvestre_2008})}
\end{align}
From Lemma \ref{lem:Gradient_estimate}, we obtain
\begin{align*}
\int_{\partial B_r} |\nabla v|^2 |y|^a 
&= 2\int_{\partial B_r} |v_{\nu}|^2 |y|^a + \frac{n+a-1}{r} \int_{B_r} |\nabla v|^2 |y|^a -\frac{2}{r} \int_{B_r} (x,y)\dotprod \nabla v L_a v,\\
\end{align*}
and identity  \eqref{eq:v_v_nu} gives us that
\begin{align*}
\int_{\partial B_r} |\nabla v|^2 |y|^a 
&= 2\int_{\partial B_r} |v_{\nu}|^2 |y|^a + \frac{n+a-1}{r} \int_{\partial B_r} v v_{\nu} |y|^a \\
&\quad - \frac{n+a-1}{r} \int_{B_r} v L_a v -\frac{2}{r} \int_{B_r} (x,y)\dotprod \nabla v L_a v.
\end{align*}
Using the preceding identity together with \eqref{eq:derivative_v_v_nu} in \eqref{eq:Derivative_phi_p}, it follows that
\begin{align*}
\frac{d}{dr}\log \Psi^p_v(r)  
&=2\left[\frac{\int_{\partial B_r} |v_{\nu}|^2|y|^a}{\int_{\partial B_r} v v_{\nu}|y|^a}-\frac{\int_{\partial B_r} v v_{\nu}|y|^a}{\int_{\partial B_r} |v|^2|y|^a}\right]\\
&\quad-\frac{\frac{n+a-1}{r} \int_{B_r} v L_a v +\frac{2}{r} \int_{B_r} (x,y)\dotprod \nabla v L_a v-\int_{\partial B_r} v L_a v}{\int_{\partial B_r} v v_{\nu}|y|^a}.
\end{align*}
We write the preceding expression as a sum $R(r)+S(r)$. By Schwartz inequality we see that $R(r)\geq 0$ . We now consider $S(r)$, and we want to show that there are positive constants, $C$ and $r_0\in (0,1)$, such that
\begin{equation}
\label{eq:Sup_estimate_S}
|S(r)| \leq C r^{2(\alpha+s-p-1)},\quad\forall r\in (0,r_0).
\end{equation}
We estimate the denominator in the expression of $S(r)$ using identity \eqref{eq:v_v_nu}. By Lemma \ref{lem:L_2_estimate_v_boundary}, inequality \eqref{eq:Ineq_F_v_r}, and the fact that we assume $p<s+\alpha-1/2$, we obtain that there is a constant, $r_0\in (0,1)$, such that
\begin{align*}
\int_{B_r} |\nabla v|^2|y|^a &\geq C(r^{n+a+2(1+p)-1}-r^{n+a+2(2s+\alpha)-1})\\
&\geq Cr^{n+a+2(1+p)-1},\quad\forall r\in(0,r_0).
\end{align*}
Using definition \eqref{eq:Definition_h}, and inequalities \eqref{eq:Growth_h}, \eqref{eq:Properties_v_1}, \eqref{eq:Properties_v_2}, and \eqref{eq:Ineq_v_on_R_n}, we obtain
\begin{align*}
\int_{B_r} v L_a v = \int_{B'_r} v h\leq Cr^{1+2\alpha+n},\quad\forall r\in(0,1).
\end{align*} 
The preceding two inequalities together with identity \eqref{eq:v_v_nu} and the fact that $p<\alpha+s-1/2$  yield
\begin{equation}
\label{eq:Denominator_S}
\int_{\partial B_r} v v_{\nu}|y|^a \geq C r^{n+a+2(1+p)-1},\quad\forall r\in(0,r_0).
\end{equation}
Now we estimate the terms in the numerator of the expression of $S(r)$. Using \eqref{eq:Obstacle_problem_simple}, definition \eqref{eq:Auxiliary_function_v} of $v$ and estimate \eqref{eq:Ineq_v_on_R_n}, definition \eqref{eq:Definition_h} of $h$ and estimate \eqref{eq:Growth_h}, we obtain
\begin{equation}
\label{eq:Numerator_S}
\begin{aligned}
\left|\frac{1}{r} \int_{B_r} v L_a v\right| &\leq Cr^{2\alpha+n},\\
\left|\frac{1}{r} \int_{B_r} (x,y)\dotprod \nabla v L_a v\right| &\leq Cr^{2\alpha+n},\\
\left|\int_{\partial B_r} v L_a v\right| &\leq Cr^{2\alpha+n}.
\end{aligned}
\end{equation}
From estimates \eqref{eq:Denominator_S} and \eqref{eq:Numerator_S}, we obtain \eqref{eq:Sup_estimate_S}.

Because we assume that$p<\alpha+s-1/2$, we see that $2(\alpha+s-p-1) >-1$, and so the lower bound of $|S(r)|$ in \eqref{eq:Sup_estimate_S} is an integrable function of $r$. Because $R(r) \geq 0$, we obtain
$$
\frac{d}{dr}\log\Psi^p_v(r) \geq -C r^{2(\alpha+s-p-1)},\quad\forall r\in (0,r_0),
$$
and so, it follows that
$$
\frac{d}{dr}\left(\log\Psi^p_v(r)+C r^{2(\alpha+s-p)-1}\right) \geq 0,\quad\forall r\in (0,r_0).
$$
Thus the function 
$$
r\mapsto \log\Psi^p_v(r)+C r^{2(\alpha+s-1/2-p)}
$$
is non-decreasing on $(0,r_0)$, from where the conclusion follows immediately using the fact that $\Psi^p_v(r)=\Phi^p_v(r)-(n+a)$.
\end{proof}

%%%%%%%%%%%%%%%%%%%%%%%%%%%%%%%%%%%%%%%%%%%%%%%%%%%%%%%%%%%%%%%%%%%%%%%%%%%%%%%
%
%                                bibliography
%
%%%%%%%%%%%%%%%%%%%%%%%%%%%%%%%%%%%%%%%%%%%%%%%%%%%%%%%%%%%%%%%%%%%%%%%%%%%%%%%

%\bibliography{mfpde}
%\bibliographystyle{amsplain}

\end{document}